\documentclass[11pt,letterpaper]{amsart}
\usepackage[margin=1.15in]{geometry}
\usepackage[T1]{fontenc}
\usepackage{lmodern}
\usepackage{microtype}

\usepackage{amsmath,amssymb,amsthm,mathtools}
\numberwithin{equation}{section}
\usepackage{enumitem}

\usepackage[hidelinks]{hyperref}

\hypersetup{
  pdftitle={Canonical Mandelbrot Cascades on Curves Are Rajchman},
  pdfauthor={Yin Cai, Guozheng Cheng, Xiang Fang, Menghan Li,
  Hongdou Qu, Chengbo Xiao},
  pdfsubject={Rajchman decay for canonical dyadic Mandelbrot cascades on intervals and planar curves},
  pdfkeywords={Mandelbrot cascades, Rajchman measures, Fourier decay, Fourier dimension, nonvanishing curvature, size-biased spine}
}

\newtheorem{theorem}{Theorem}[section]
\newtheorem{proposition}[theorem]{Proposition}
\newtheorem{lemma}[theorem]{Lemma}
\newtheorem{corollary}[theorem]{Corollary}
\newtheorem{fact}[theorem]{Fact}
\theoremstyle{definition}
\newtheorem{definition}[theorem]{Definition}

\theoremstyle{remark}
\newtheorem{remark}[theorem]{Remark}

\newcommand{\E}{\mathbb{E}}
\newcommand{\R}{\mathbb{R}}
\newcommand{\T}{\mathbb{T}}

\newcommand{\dimF}{\dim_{\mathrm F}}

\newcommand{\dimH}{\dim_{\mathrm H}}

\newcommand{\boldparagraph}[1]{%
  \paragraph{\textbf{#1}}%
}

\title{Canonical Mandelbrot Cascades on Curves Are Rajchman}

\author[Y. Cai]{Yin Cai}
\address{School of Mathematics\\
Hangzhou Normal University\\
Hangzhou 311121\\
P. R. China}
\email{cymath@hznu.edu.cn}

\author[G. Cheng]{Guozheng Cheng}
\address{School of Mathematical Sciences\\
Dalian University of Technology\\
Dalian 116024\\
P. R. China}
\email{gzhcheng@dlut.edu.cn}

\author[X. Fang]{Xiang Fang}
\address{Department of Applied Mathematics\\
National Yang Ming Chiao Tung University\\
Hsinchu 30010\\
Taiwan}
\email{xfang@nycu.edu.tw}

\author[M. Li]{Menghan Li}
\address{School of Mathematical Sciences\\
Dalian University of Technology\\
Dalian 116024\\
P. R. China}
\email{lmh2025@mail.dlut.edu.cn}

\author[H. Qu]{Hongdou Qu}
\address{School of Mathematical Sciences\\
Dalian University of Technology\\
Dalian 116024\\
P. R. China}
\email{quhongdou0926@mail.dlut.edu.cn}

\author[C. Xiao]{Chengbo Xiao}
\address{School of Mathematical Sciences\\
Dalian University of Technology\\
Dalian 116024\\
P. R. China}
\email{xiaochengbo@mail.dlut.edu.cn}

\keywords{Mandelbrot cascades, Rajchman measures, Fourier decay,
Fourier dimension, nonvanishing curvature, size-biased spine}

\subjclass[2020]{%
  Primary 60G57;   
  Secondary 42B10, 
  28A80}

\date{}

\begin{document}
\begin{abstract}
We settle the Rajchman problem for canonical scalar dyadic Mandelbrot cascades at the minimal Kahane--Peyri\`ere integrability threshold.
If $\mu$ is the cascade on $[0,1]$, 
then $\widehat{\mu}(\xi)\to 0$ as $|\xi|\to\infty$, 
almost surely on non-extinction.  
For every fixed nondegenerate $C^2$ embedded arc $\gamma:[0,1]\to\mathbb{R}^2$, 
the pushforward $\gamma_\#\mu$ is likewise Rajchman almost surely on non-extinction.  
The analogous conclusion holds for the scalar cascade on the parameter circle pushed forward 
by any fixed nondegenerate \(C^2\) Jordan curve. 
No moment condition of order strictly greater than one is imposed; in particular, the results include the regime \(\mathbb{E}[W^q]=\infty\) for every \(q>1\).

The proof combines a spine-based lower-deviation principle, 
adaptive terminal approximation, 
and predictable capping to obtain almost-sure estimates uniform over large frequency annuli without higher moments.  
For curved pushforwards, an endpoint-safe phase decomposition controls direction-dependent stationary regions, 
including those meeting the endpoints of an arc, and couples the geometric and probabilistic arguments through a common dyadic kernel.  
Combined with the exact Fourier-dimension formulas for the corresponding models, 
the theorems show that Rajchman decay persists at zero Fourier dimension.
\end{abstract}

\maketitle

\tableofcontents

\section{Introduction and main results}\label{S:introduction}

\subsection{The historical problem and its qualitative endpoint}
\label{SS:introduction-history}

Mandelbrot cascades are canonical random multifractal measures, 
introduced as models of intermittent mass redistribution in turbulence.  
In the scalar dyadic model, independent copies of a nonnegative weight are attached to the binary tree, 
and mass is redistributed multiplicatively from one generation to the next.  
The associated positive branching martingale makes local mass exponents and
dimension theory accessible.  Fourier decay probes a different feature:
global oscillatory cancellation.  Such cancellation is not determined by
dimensional information alone and, at the minimal Kahane--Peyri\`ere
integrability threshold, must be obtained without higher-moment estimates.
This tension between tree multiplicativity and harmonic cancellation is the source of the Fourier problem for cascades; 
see \cite{Kahane1985SomeRandomSeries,Kahane1987PositiveMartingales, KahanePeyriere1976,Mandelbrot1974} 
for the foundations.

The Fourier problem originates in Mandelbrot's 1976 study of intermittent turbulence.  
He asked whether the Fourier coefficients of the limiting cascade obey a spectral power law
\begin{equation}\label{E:intro-Mandelbrot-power-law}
        |\widehat\mu(k)|^2\sim k^{-D}
        \qquad (k\to\infty)
\end{equation}
for an appropriate exponent \(D\), 
and how the optimal spectral exponent is related to the Hausdorff dimension of the measure.  
In describing the harmonic-analytic background, 
he wrote that ``the nature of the relationship between \(D\) and Fourier analysis has long been central''
\cite[p.~127]{Mandelbrot1976}.  
Kahane later placed Fourier decay among the basic questions for natural random measures \cite{Kahane1993}.

Viewed in this way, Mandelbrot's quantitative question contains a logically prior qualitative one: 
before asking for the optimal spectral exponent, 
one may ask whether the Fourier transform of a canonical cascade tends to zero at all.  
In modern terminology this is the Rajchman problem; 
for historical background on Rajchman measures and their role in Fourier analysis, see \cite{Lyons1995Rajchman}.  
It becomes a genuine endpoint problem in regimes where no positive uniform power law is available.

To formulate the distinction precisely, let \(\nu\) be a finite Borel measure on \(\R^d\).  
We use the convention
\begin{equation}\label{E:intro-Fourier-transform}
        \widehat\nu(\xi)
        =
        \int_{\R^d}e^{-2\pi i\xi\cdot x}\,d\nu(x),
\end{equation}
and define
\begin{equation}\label{E:intro-Fourier-dimension}
\dimF\nu
=
\sup\left\{
0\leq s\leq d:
|\widehat\nu(\xi)|=O(|\xi|^{-s/2})
\text{ as }|\xi|\to\infty
\right\}.
\end{equation}
For general background on Hausdorff dimension, Fourier dimension, 
and Fourier transforms of fractal measures, see \cite{Mattila2015}. 
Motivated by the modern formulation of the Mandelbrot--Kahane problem in
\cite{ChenHanQiuWang2024,ChenHanQiuWang2025CR}, 
it is useful here to distinguish four related but logically separate questions:
\begin{enumerate}[label=\textup{(\roman*)}]
\item \emph{Rajchman decay:} does
\(
\widehat\nu(\xi)\to0
\)
as \(|\xi|\to\infty\)?

\item \emph{Polynomial Fourier decay:} does there exist \(s>0\)
such that
\(
|\widehat\nu(\xi)|=O(|\xi|^{-s/2})
\)?

\item \emph{Exact Fourier dimension:} what is the value of \(\dimF\nu\)?

\item \emph{The Salem property:} when does
\(
\dimF\nu=\dimH\nu
\)?
\end{enumerate}
Positive polynomial decay clearly implies the Rajchman property. 
The converse from Rajchman decay to any positive power law is false in general.  
Thus Rajchman decay is not merely a weak form of positive Fourier dimension: 
if \(\dimF\nu=0\), then every positive uniform power law is ruled out, 
but the qualitative possibility \(\widehat\nu(\xi)\to0\) remains open.  
We call a measure \emph{pure Rajchman} if it is Rajchman and has Fourier dimension zero.

\boldparagraph{Recent dimension theory and the remaining endpoint.}
Recent work has substantially advanced the quantitative side of this program.
For canonical interval cascades, Fourier-dimension results were first obtained
under additional tail or moment assumptions \cite{ChenHanQiuWang2024,ChenLiSuomala2025}.  
Exact formulas for the canonical interval and circle models were subsequently established 
under the minimal Kahane--Peyri\`ere assumptions \cite{CaiChengFangLiQuXiao2026}.  
For cascades on planar curves, the first Fourier-dimension results required stronger tail hypotheses \cite{RyouSuomala2026}, 
while the corresponding minimal-integrability formulas were obtained later in \cite{CaiFangQu2026Curves}.

The exact formulas, together with their immediate equality cases, 
resolve questions \textup{(ii)}--\textup{(iv)} within the four-question framework adopted here for the canonical scalar models.  
The sole remaining issue is Rajchman decay at the zero-Fourier-dimension endpoint.  
In these models, the dimension formulas identify that endpoint precisely with the heavy-tail regime, 
characterized by \(\E[W^q]=\infty\) for every \(q>1\).  
The theorems below establish Rajchman decay under the minimal Kahane--Peyri\`ere assumptions 
and thereby
complete the picture for the remaining qualitative endpoint for these canonical models.

\boldparagraph{Other Rajchman models.}
The Rajchman property has also been established in several other classes of measures.  
Spatially independent martingales, including fractal-percolation models, 
may exhibit rapid or polynomial Fourier decay \cite{ShmerkinSuomala2018}.  
Renewal and arithmetic methods give broad Rajchman criteria, 
as well as descriptions of exceptional non-Rajchman configurations, 
for self-similar measures \cite{Bremont2021,LiSahlsten2022,Rapaport2022}; 
irreducibility and non-compactness yield Fourier decay for important self-affine systems \cite{LiSahlsten2020}. 
Nonlinear pushforwards and self-conformal measures form another closely related direction.  
Polynomial Fourier decay for broad classes of fractal measures under nonlinear maps was established in \cite{BakerBanaji2025},
while planar analytic self-conformal measures were treated under suitable nonlinearity and irreducibility assumptions in \cite{AlgomRodriguezHertzWang2026}. 
On the random side, Gaussian multiplicative chaos provides another major class of random multifractal measures; 
see \cite{RhodesVargas2014} for a general review. 
Subcritical Gaussian multiplicative chaos on the circle is almost surely Rajchman \cite{GarbanVargas2026}. 
These results provide essential context, but their proofs use spatial independence, 
renewal or algebraic structure, matrix irreducibility, or Gaussian input.  
None directly controls a general scalar Mandelbrot cascade under  the minimal Kahane--Peyri\`ere assumptions.

\subsection{Curvature, uniformity, and heavy tails}
\label{SS:introduction-difficulties}

There are three distinct obstructions at the Rajchman endpoint: 
a geometric obstruction coming from normal frequencies, 
an annular uniformity problem in the conclusion itself, and a probabilistic obstruction caused by heavy tails.
The interval theorem is the canonical one-dimensional endpoint result.  
The curved pushforward theorems add a genuinely geometric difficulty rather than a formal change of variables.

\boldparagraph{The affine-line obstruction.}
Let \(\gamma(t)=a+tv\) be an affine embedding, let \(\mu\) be a nonzero finite positive measure on \([0,1]\), and set \(\nu=\gamma_\#\mu\).  
For every unit vector \(\omega\perp v\) and every \(R>0\),
\begin{equation}\label{E:intro-affine-obstruction}
\widehat\nu(R\omega)
=
\int e^{-2\pi iR\omega\cdot(a+tv)}\,d\mu(t)
=
e^{-2\pi iR\omega\cdot a}\,\mu([0,1]).
\end{equation}
Thus a nonzero positive measure carried by a line is never Rajchman as a measure on \(\R^2\), 
even when its parameter measure is Rajchman on \(\R\).
Nonvanishing curvature removes this fixed normal-frequency obstruction, 
but it does not by itself prove Fourier decay.  
It replaces a permanent resonance by a moving stationary region whose location depends on the direction of the frequency.

For a fixed curved parametrization,
\begin{equation}\label{E:intro-curve-phase}
\widehat{\gamma_\#\mu}(r\omega)
=
\int_0^1 e^{-2\pi ir\,\omega\cdot\gamma(t)}\,d\mu(t),
\qquad \omega\in\mathbb S^1.
\end{equation}
The frequency has both a radial variable and a direction, 
and the stationary regions of \(t\mapsto\omega\cdot\gamma(t)\) move with \(\omega\).  
For an arc, such regions may enter or leave the parameter interval through either endpoint. 
Moreover, the amplitude is a cascade measure rather than a smooth density, 
so the usual smooth-amplitude stationary-phase theorem is not directly available.  
The curve theorem therefore requires a direction-uniform phase decomposition that is compatible 
with random cascade mass and with endpoint stationary regimes.

\boldparagraph{Uniformity over all frequencies.}
Rajchman decay is not a fixed-frequency statement.  
It requires one probability-one event on which all sufficiently large frequencies are controlled simultaneously.  
Writing
\begin{equation}\label{E:intro-annular-supremum}
M_n(\nu)
:=
\sup_{2^n\leq|\xi|<2^{n+1}}|\widehat\nu(\xi)|,
\end{equation}
the desired conclusion is \(M_n(\nu)\to0\) almost surely.  
A fixed-frequency estimate, or even decay along a prescribed lacunary sequence, 
does not imply this annular statement.  
Compact support gives a Lipschitz modulus in \(\xi\), so an annulus can be discretized; 
in the planar case a unit-scale grid has cardinality \(O(2^{2n})\).  
The exceptional probability at one grid point must therefore absorb this entropy and remain summable in \(n\), 
so that Borel--Cantelli yields a single samplewise tail estimate.

Weak convergence of the finite cascade approximations does not remove this uniformity problem.  
The convergence \(\mu_N\Rightarrow\mu\) implies \(\widehat{\mu_N}(\xi)\to\widehat\mu(\xi)\) for each fixed \(\xi\), 
but the Lipschitz norm of the test function \(t\mapsto e^{-2\pi i\xi\cdot\gamma(t)}\) grows with \(|\xi|\).  
Thus weak convergence alone gives no approximation uniform in frequencies whose size grows with the cascade level.  
Large terminal cylinders and their descendant masses must instead be controlled 
on the same event and uniformly over the frequency grid.

\boldparagraph{The heavy-tail obstruction.}
The third difficulty is probabilistic.  
The minimal Kahane--Peyri\`ere assumptions allow the genuinely heavy-tailed regime
\begin{equation}\label{E:intro-extreme-heavy-tail}
        \E[W^q]=\infty
        \qquad\text{for every }q>1.
\end{equation}
This lies outside the finite-\((1+\delta)\)-moment arguments that yield polynomial Fourier decay and outside the available finite-moment annular machinery.  
At the same time, the endpoint obstruction is not simply the absence of all concentration.  
After the relevant martingale increments are predictably capped at size \(b/n\), 
Freedman's inequality \cite{Freedman1975} gives bounds of the form
\begin{equation}\label{E:intro-Freedman-scale}
        C\exp(-c n/b),
\end{equation}
which, for sufficiently small fixed \(b>0\), absorb the planar grid entropy.
The centered capped martingale is therefore not the main endpoint obstruction.

What remains is the uncapped positive large-jump contribution and its predictable tail compensator.  
Without any \(L^{1+\eta}\) information,
standard moment estimates do not make these terms summable uniformly in the frequency annulus and in the tree generation.  
Thus the endpoint problem is simultaneously geometric, annular, and large-jump probabilistic; 
the proof must control all three features on a single almost-sure event.  
The condition \eqref{E:intro-extreme-heavy-tail} is not an additional hypothesis of the main theorems, 
but it marks the regime in which the pure Rajchman phenomenon is most transparent.

\subsection{Standing assumptions and main theorems}
\label{SS:introduction-main-results}

Throughout the paper, the scalar cascade weight \(W\) satisfies
\begin{equation}\label{E:intro-KP}
W\geq0,
\qquad
\E W=1,
\qquad
\E[W\log_2^+W]<\infty,
\qquad
\E[W\log_2W]<1,
\end{equation}
where \(0\log_2 0=0\).  We write
\begin{equation}\label{E:intro-chi}
        \chi=1-\E[W\log_2W]>0.
\end{equation}
These are the minimal Kahane--Peyri\`ere assumptions for nontriviality in the subcritical case.  
In particular, no condition of the form
\(
\E[W^{1+\eta}]<\infty
\)
is imposed.  
Unless explicitly stated otherwise, all almost-sure assertions below are understood on the corresponding non-extinction event.

Our first theorem is the one-dimensional endpoint result.

\begin{theorem}[Interval Rajchman theorem]
\label{T:main-interval-rajchman}
Let \(\mu\) be the canonical scalar dyadic Mandelbrot cascade on \([0,1]\) generated by \(W\).  
Then, almost surely on \(\{\mu([0,1])>0\}\),
\begin{equation}\label{E:intro-interval-conclusion}
        \widehat\mu(\xi)\longrightarrow0
        \qquad (|\xi|\to\infty,\ \xi\in\R).
\end{equation}
\end{theorem}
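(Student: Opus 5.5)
We will prove that the annular suprema \(M_n(\mu)=\sup_{2^n\le|\xi|<2^{n+1}}|\widehat\mu(\xi)|\) tend to zero almost surely on non-extinction. Since \(\mu\) is supported in \([0,1]\), \(\widehat\mu\) is \(2\pi\mu([0,1])\)-Lipschitz. It therefore suffices to control \(\widehat\mu\) on a grid \(\Xi_n\) of mesh \(\varepsilon\) in the annulus, of cardinality \(O(2^n/\varepsilon)\), and to show that for each fixed \(\varepsilon>0\)
\[
\sum_n \mathbb P\Bigl(\max_{\xi\in\Xi_n}|\widehat\mu(\xi)|>\varepsilon\,,\ \mu([0,1])>0\Bigr)<\infty .
\]
Borel--Cantelli then gives \(\limsup_n M_n(\mu)\le C\varepsilon\) almost surely. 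We choose a level \(m=m(n)\), roughly \(n-\lfloor\beta n\rfloor\) with a small \(\beta\), or \(m=n\) shifted by a fixed constant. We decompose \(\mu=\sum_{|I|=2^{-m}}\mu(I)\) over dyadic cylinders \(I\) of generation \(m\). We write \(\mu(I)=X_I\,2^{-m}Z_I\), where \(X_I=\prod_{J\supseteq I}W_J\) and the \(Z_I\) are i.i.d.\ copies of the total mass, independent of \(\mathcal F_m\).

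Then
\[
\widehat\mu(\xi)=\sum_I X_I2^{-m}\bigl(Z_I\widehat{\mu_I}(\xi)\bigr),
\]
where \(\mu_I\) is the normalized rescaled copy living on \(I\). Conditionally on \(\mathcal F_m\), this is a sum of independent terms. We split it into three pieces. The first is the conditional mean \(\sum_I X_I2^{-m}\E[Z\widehat{\nu}(2^{-m}\xi)]e^{-2\pi i\xi a_I}\), where \(\nu\) is the law-average. The annealed mean measure is Lebesgue, so this equals \(\widehat{\mu_m}(\xi)\) times a factor \(\widehat{\mathbf 1_{[0,1]}}(2^{-m}\xi)\). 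We pick \(m\) so that \(|2^{-m}\xi|\ge 2^{\beta n}\), which makes this factor \(O(2^{-\beta n})\) uniformly. The second piece is the centered fluctuation of the terms whose weight \(X_I2^{-m}\) lies below a predictable cap \(b/n\). The third is the contribution of the heavy cylinders with \(X_I2^{-m}>b/n\).

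The centered capped part is handled as in the introduction. Conditionally on \(\mathcal F_m\), it is a martingale sum with increments bounded by \(b/n\) times \(Z\)-type variables. The \(Z_I\) need not be bounded either, so we also truncate \(Z_I\) at a large level \(K\). The truncation error \(\sum_I X_I2^{-m}Z_I\mathbf 1_{Z_I>K}\) has conditional mean \(\mu_m([0,1])\,\E[Z\mathbf 1_{Z>K}]\), which is small uniformly in \(\xi\). A Bernstein/Freedman bound then gives failure probability \(C\exp(-cn/(bK))\) per frequency. For \(b\) small this beats the \(2^n\) entropy, and the union over the grid is summable.

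The main obstacle is the heavy-cylinder term, which has no moment control beyond \(\E[W\log W]<\infty\). Here we use the spine lower-deviation principle. Under the size-biased (Peyrière) measure, \(-\log_2(X_I2^{-m})/m\) along the spine converges to \(\chi>0\) by the law of large numbers, and this uses only \(\E[W\log^+W]<\infty\). Hence the \(\mu_m\)-mass carried by cylinders with \(X_I2^{-m}>2^{-\chi m/2}\) tends to zero almost surely; this mass is exactly \(\mu\) restricted to them, in expectation given \(\mathcal F_m\). We choose the cap \(b/n\gg 2^{-\chi m/2}\), so that heavy cylinders are in particular spine-atypical. Their total contribution to \(|\widehat\mu(\xi)|\) is then bounded, uniformly in \(\xi\), by \(\mu(\bigcup_{\text{heavy }I}I)\), and this tends to zero almost surely. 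This is a frequency-free bound, so no union over the grid is needed.

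The delicate point is making this almost-sure rather than in probability. We would prove summability along the levels \(m(n)\) via the lower-deviation estimate
\[
\E\bigl[\mu_m\{\text{cylinders with }X_I2^{-m}>2^{-\chi m/2}\}\bigr]=\mathbb P_{\mathrm{spine}}\bigl(S_m<\chi m/2\bigr),
\]
combined with adaptive terminal approximation. For the latter, we pass from \(\mu_m\)-mass to \(\mu\)-mass through the independent \(Z_I\) using conditional Markov, possibly along a subsequence with monotone interpolation. If the plain LLN is too weak for summability, we would instead use the martingale convergence of the spine-typical mass directly, because it is monotone in the threshold. Finally, combining the three pieces, letting \(\varepsilon,b,K\) go to zero along countable sequences and intersecting with non-extinction completes the argument.
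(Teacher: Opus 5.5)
Your architecture differs from the paper's and is sound in outline. At the coarse level $m\approx(1-\beta)n$, the exact identity $\widehat\mu(\xi)=\sum_{|u|=m}A_u\mathrm{e}^{-2\pi i\xi a_u}\widehat{\mu^{(u)}}(2^{-m}\xi)$ has conditional mean $\widehat{\mu_m}(\xi)=O(Z_m2^{m-n})$. This replaces the paper's fine-level atomization at $n+h$ and its whole stopped martingale skeleton. Your Bernstein bound for the light, $Z$-truncated part is also fine once you localize on $\{Z_m\le K_1\}$ and choose $b$ after $K$.

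The gap is exactly at the step you call delicate: nothing in the proposal yields an almost-sure bound for the heavy term $\mu\bigl(\bigcup_{A_I>b/n}I\bigr)$.
\begin{itemize}
\item The spine law of large numbers gives only $\E[\mu_m(\text{heavy})]=\mathbb P^\star(S_m<\chi m/2)\to0$, i.e.\ convergence in mean. The almost-sure version for $\mu_m$-masses is precisely Theorem~\ref{T:spine-lower-deviation}. The paper obtains it from complete convergence on dyadic blocks (Lemma~\ref{L:spine-dyadic-complete-convergence}) together with a hereditary submartingale and Doob's inequality. ``Monotone interpolation'' cannot replace this, because the heavy mass is not monotone in the level.
\item Your main proposal, summability of $\mathbb P^\star(S_m<\chi m/2)$ over all levels, holds only if $\E[W(\log_2^+W)^2]<\infty$. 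The Kahane--Peyri\`ere assumptions do not provide this.
\item Even granting $\mu_m(\text{heavy})\to0$ almost surely, conditional Markov applied to $\mu_m(\text{heavy})=\E[\mu(\text{heavy})\mid\mathcal F_m]$ controls $\mu(\text{heavy})$ only in probability.
\end{itemize}
The same defect affects your truncation error. A small conditional mean $Z_m\E[Z_\infty\mathbf 1_{\{Z_\infty>K\}}]$ does not make $\sum_IA_IZ_I\mathbf 1_{\{Z_I>K\}}$ eventually small almost surely.

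Both holes close without any rate.

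\emph{Heavy term.} Use the Peyri\`ere measure $Q(\mathrm{d}\omega,\mathrm{d}t)=\mathbb P(\mathrm{d}\omega)\,\mu_\omega(\mathrm{d}t)$ directly, rather than $\mu_m$-masses. Under $Q$ the weights $W_{I_k(t)}$, $k\ge1$, along the dyadic path of $t$ are i.i.d.\ with the size-biased law; this uses $\E Z_\infty=1$. The strong law then gives $V(I_m(t))/m\to\chi$ for $\mu$-a.e.\ $t$, almost surely. Since $2^{-\chi m/2}<b/n$ for large $n$, the integrand in $\mu(\text{heavy})=\int\mathbf 1_{\{A_{I_m(t)}>b/n\}}\,\mathrm{d}\mu(t)$ tends to zero $\mu$-a.e. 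Dominated convergence concludes, just as in Proposition~\ref{P:terminal-positive-tail}.

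\emph{Truncation error.} Bound it by the sum of two pieces:
\begin{itemize}
\item $\sum_I\mu(I)\mathbf 1_{\{\mu(I)>b'/n\}}$, which tends to zero almost surely by the exact-dimensionality argument of Proposition~\ref{P:terminal-positive-tail} run at level $m$;
\item a sum of conditionally independent terms bounded by $b'/n$, with conditional mean at most $Z_mH_Z(K)$. Bernstein applies to this, and no grid is needed since the term is frequency-free.
\end{itemize}

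With these repairs your route is complete and lighter than the paper's. The cap is on the predictable weight $A_I$ and the truncation is at a fixed level $K$. Hence the heavy part needs no compensator, and the truncation compensator is only $Z_mH_Z(K)$. You therefore need neither Theorem~\ref{T:spine-lower-deviation}, nor the shifted convolution, nor Theorem~\ref{T:abstract-stopped-skeleton}.
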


\begin{remark}
\label{R:intro-further-extensions}
Theorem~\ref{T:main-interval-rajchman} is stated in the scalar dyadic setting in order to keep the formulation and the endpoint mechanism in their simplest form.  
Vector-weight and \(b\)-adic variants in higher dimensions require additional formulation and bookkeeping and are not pursued here.
\end{remark}

The main geometric result asserts that this qualitative decay persists under every prescribed nondegenerate curved embedding of the parameter interval into the plane.

\begin{definition}[Fixed nondegenerate embedded arc] 
\label{D:main-nondegenerate-arc}
A map \(\gamma:[0,1]\to\R^2\) is a fixed nondegenerate \(C^2\) embedded arc if it is a \(C^2\) embedding and
\begin{equation}\label{E:intro-arc-nondegeneracy}
\inf_{t\in[0,1]}|\gamma'(t)|>0,
\qquad
\inf_{t\in[0,1]}
|\det(\gamma'(t),\gamma''(t))|>0.
\end{equation}
\end{definition}

\begin{theorem}[Fixed-arc Rajchman theorem]
\label{T:main-arc-rajchman}
Let \(\mu\) be as in Theorem~\ref{T:main-interval-rajchman}, 
and let \(\gamma:[0,1]\to\R^2\) be a fixed nondegenerate \(C^2\) embedded arc.  
Then, almost surely on \(\{\mu([0,1])>0\}\),
\begin{equation}\label{E:intro-arc-conclusion}
        \widehat{\gamma_\#\mu}(\xi)\longrightarrow0
        \qquad (|\xi|\to\infty,\ \xi\in\R^2).
\end{equation}
\end{theorem}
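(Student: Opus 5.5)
We reduce Theorem~\ref{T:main-arc-rajchman} to annular estimates. Set \(\nu=\gamma_\#\mu\). It suffices to show that for every fixed \(\varepsilon>0\), almost surely \(M_n(\nu)\le\varepsilon\,\mu([0,1])\) for all large \(n\), where \(M_n\) is as in \eqref{E:intro-annular-supremum}. Since \(\gamma\) is bounded, \(\xi\mapsto\widehat\nu(\xi)\) is Lipschitz with constant \(O(\mu([0,1]))\). We may therefore replace the annulus \(2^n\le|\xi|<2^{n+1}\) by a grid \(\Lambda_n\) of mesh \(\varepsilon/C\), with \(\#\Lambda_n=O(2^{2n})\). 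By Borel--Cantelli, it then suffices to prove a per-frequency exceptional probability bounded by \(C e^{-cn}\) with \(c>2\log 2\).

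\textbf{Geometric decomposition.} Write \(\xi=r\omega\) and \(\phi_\omega(t)=\omega\cdot\gamma(t)\). By \eqref{E:intro-arc-nondegeneracy}, \(\phi_\omega'\) vanishes at most at one point \(t_\omega\), possibly near or at an endpoint, and \(|\phi_\omega''|\gtrsim1\) there. We fix a dyadic level \(m=m(n)\) with \(2^{-m}\approx r^{-1/2}\cdot K\), where \(K\) is a large constant depending on \(\varepsilon\). Let \(\mathcal S_\omega\) be the \(O(1)\) dyadic intervals of length \(2^{-m}\) meeting the stationary window \(|t-t_\omega|\lesssim K r^{-1/2}\), intersected with \([0,1]\) so that endpoint cases are included. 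On each remaining dyadic interval \(I\) of a Whitney-type decomposition adapted to \(t_\omega\), we have \(|\phi_\omega'|\gtrsim \mathrm{dist}(I,t_\omega)\). Using the self-similarity \(\mu|_I=\mu(I)\cdot(\text{rescaled independent copy }\mu^{(I)})\), where the factor \(\mu(I)\) is the product of weights along the path, we linearize the phase on \(I\): \(\phi_\omega(t)=\phi_\omega(c_I)+\phi_\omega'(c_I)(t-c_I)+O(|I|^2)\). This gives
\[
\widehat\nu(\xi)=\sum_{I\in\mathcal S_\omega}\int_I e^{-2\pi i r\phi_\omega}\,d\mu+\sum_{I\notin\mathcal S_\omega}e^{i\theta_I}\,W_I\,\widehat{\mu^{(I)}}\bigl(r\phi_\omega'(c_I)|I|\bigr)+\mathrm{Err}.
\]
Here \(W_I\) is the product of weights along the path, \(\widehat{\mu^{(I)}}\) is the interval cascade transform, and \(\mathrm{Err}\) is controlled by choosing \(|I|\) so that \(r|I|^2\le\varepsilon\). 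The stationary part has mass \(\mu(\bigcup\mathcal S_\omega)\), which is the mass of \(O(1)\) intervals of length about \(r^{-1/2}\). We bound it uniformly by \(\max_{|J|=2^{-m}}\mu(J)\to0\); this holds almost surely by the known local-dimension estimate \(\mu(J)\le|J|^{\chi'}\) for some \(\chi'>0\) under \eqref{E:intro-KP}, and it handles endpoint stationary regimes at no extra cost. On the nonstationary intervals the effective frequencies \(r\phi_\omega'(c_I)|I|\) are large, since \(\mathrm{dist}(I,t_\omega)\gg r^{-1/2}\).

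\textbf{Probabilistic step.} This is where Theorem~\ref{T:main-interval-rajchman} and its proof enter. We use the ingredients of that proof (spine lower deviation, adaptive terminal approximation, predictable capping with Freedman's inequality) in a form that is uniform over a common dyadic kernel. Conditionally on \(\mathcal F_m\), the terms \(W_I\widehat{\mu^{(I)}}(\cdot)\) are independent and have conditional mean zero after we subtract \(W_I\,\E\widehat{\mu^{(I)}}(\eta_I)=W_I\widehat{\lambda}(\eta_I)\), where \(\lambda\) is Lebesgue measure; the deterministic parts are small because \(|\eta_I|\) is large. We cap the increments predictably at \(b/n\) times the total mass, and Freedman's inequality then gives \(Ce^{-cn/b}\), which beats \(2^{2n}\) for small \(b\).

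The main obstacle is the uncapped large-jump part: the few intervals \(I\) carrying mass exceeding \(b/n\) of the total. On those intervals we recurse. Each heavy \(I\) is refined into its descendants with the same phase analysis, and we use that \(\widehat{\mu^{(I)}}\) at a large frequency is itself small. This follows from the interval theorem applied to the independent copy, taken on one almost-sure event over the countably many cylinders via a quantitative (summable) version of the annular bound from the interval proof. The spine lower-deviation principle shows that only \(O(n/b)\)-many heavy cylinders occur along any level with overwhelming probability, and that the tail compensator is \(o(1)\) uniformly over \(\omega\). We expect to spend most effort making this recursion uniform over directions \(\omega\), so that the heavy-cylinder set is chosen independently of \(\xi\) and the grid union bound applies.
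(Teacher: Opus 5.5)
The skeleton of your argument is workable once tidied (see the small corrections at the end). Linearize on generation-$m$ cylinders with $2^{-m}\approx\sqrt{\varepsilon}\,r^{-1/2}$. Discard a window of $O(1)$ intervals of length $O(Kr^{-1/2})$ where $|\phi_\omega'|\le Kr^{-1/2}$. Then center the remaining terms $A_I\widehat{\mu^{(I)}}(\eta_I)$ conditionally on $\mathcal F_m$ using $\E\widehat{\mu^{(I)}}(\eta)=\widehat\lambda(\eta)$. However, the step you flag as the main obstacle is not closed, and the recursion you propose for it would fail.

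The recursion needs a ``quantitative (summable) version of the annular bound from the interval proof'' that holds simultaneously for the heavy cylinders. The interval proof provides no such bound. Its large-jump and compensator terms are controlled only by rate-free almost-sure limits: dominated convergence in Proposition~\ref{P:terminal-positive-tail} and the qualitative Theorem~\ref{T:spine-lower-deviation}. So the per-frequency bound $Ce^{-cn}$ from your first paragraph can only ever be obtained for the capped part. Two further problems:
\begin{itemize}
\item The heavy cylinders are selected by their own descendant masses, so applying the interval theorem to each $\mu^{(I)}$ separately gives no uniformity over the $2^m$ candidates.
\item For cylinders next to the stationary window, $|\eta_I|$ is only of order $K\sqrt\varepsilon$, so decay of $\widehat{\mu^{(I)}}$ ``at large frequency'' is not the relevant regime there.
\end{itemize}
Also, the number of heavy cylinders is trivially at most $n\mu([0,1])/b$; this is not what the spine estimate provides.

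The missing observation is that no recursion is needed. Bound the uncapped part crudely, in absolute value, by quantities that do not depend on $\xi$. Put the cap on $\mu(I)=A_IZ_\infty^{(I)}\le b/n$; a cap at $b/n$ times the total mass is not $\mathcal F_m$-measurable. Using $|\widehat{\mu^{(I)}}(\eta)|\le Z_\infty^{(I)}$, the centered sum differs from its capped version by at most
\[
\sum_{|I|=2^{-m}}\mu(I)\mathbf{1}_{\{\mu(I)>b/n\}}
+\sum_{|I|=2^{-m}}A_I\,\E\bigl[Z_\infty\mathbf{1}_{\{A_IZ_\infty>b/n\}}\bigr].
\]
Since $m\ge n/2$, both sums tend to zero almost surely on non-extinction. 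The arguments are exactly those of Propositions~\ref{P:terminal-positive-tail} and~\ref{P:terminal-compensator}, run at generation $m$ instead of $n+h$: exact dimensionality plus dominated convergence for the first, and $L_m(\theta)\to0$, $H_Z(b2^{\theta m}/n)\to0$ for the second. The capped part has conditional variance at most $(b/n)Z_m$, so Lemma~\ref{L:bernstein-conditional-complex} on $\mathcal Z_{K_1}$ pays for the planar grid. The forcing is at most $\sum_I A_I/(\pi|\eta_I|)\lesssim Z_m/(K\sqrt\varepsilon)$.

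With this repair your route is genuinely different from the paper's, and shorter. The paper atomizes at generation $n+h$, where descendant frequencies are tiny. It must therefore expand $\int e^{i\phi_\xi}\chi_{\xi,d}\,\mathrm{d}\mu_{n+h}$ over all generations, using derivative bands, Lemma~\ref{L:spine-shifted-convolution} and Theorem~\ref{T:abstract-stopped-skeleton}. Your single conditioning at generation $\approx n/2$ replaces all of that with the decay of $\widehat\lambda(\eta_I)$.

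Three smaller corrections:
\begin{itemize}
\item A uniform bound $\mu(J)\le|J|^{\chi'}$ is not available under the Kahane--Peyri\`ere assumptions; it fails when $\E[W^q]=\infty$ for all $q>1$. Atomlessness with Lemma~\ref{L:preliminaries-small-interval-modulus} gives what you need for the window.
\item $\phi_\omega'$ can have $O_\gamma(1)$ zeros rather than one, when the tangent turns by more than $\pi$. The sublevel-set bound handles this.
\item The partition must be uniform at generation $m$, not Whitney, because the linearization needs $r|I|^2\le\varepsilon$ on every non-stationary cylinder.
\end{itemize}
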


There is a corresponding theorem for closed curves.  
Here the parametrization is part of the data, since the cascade is defined on the parameter circle \(\T=\R/\mathbb Z\).

\begin{definition}[Fixed nondegenerate  Jordan curve]
\label{definition:fixed-curve}
A map \(\Gamma:\mathbb T\to\mathbb R^2\) is called a fixed nondegenerate
\(C^2\) Jordan curve if \(\Gamma\) is a \(C^2\) embedding and satisfies
 \begin{equation}\label{E:intro-Jordan-nondegeneracy}
        \inf_{t\in\mathbb T}|\Gamma'(t)|>0,
     \qquad
    \inf_{t\in\mathbb T}
     |\det(\Gamma'(t),\Gamma''(t))|>0.
\end{equation}
 When derivatives are written, \(\Gamma\) is represented by its associated
 \(1\)-periodic \(C^2\) lift to \(\mathbb R\).
 \end{definition}

No invariance under nonlinear reparametrizations is asserted;
the theorem applies separately to every fixed parametrization satisfying
\eqref{E:intro-Jordan-nondegeneracy}.

\begin{theorem}[Fixed-Jordan-curve Rajchman theorem]
\label{T:main-jordan-rajchman}
Let \(\widetilde\mu^{\T}\) be the canonical scalar dyadic cascade on \(\T\)
generated by \(W\), and let \(\Gamma:\T\to\R^2\) be a fixed nondegenerate
\(C^2\) Jordan curve.  Then, almost surely on
\(\{\widetilde\mu^{\T}(\T)>0\}\),
\begin{equation}\label{E:intro-Jordan-conclusion}
\widehat{\Gamma_\#\widetilde\mu^{\T}}(\xi)\longrightarrow0
\qquad (|\xi|\to\infty,\ \xi\in\R^2).
\end{equation}
\end{theorem}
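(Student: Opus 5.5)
The plan is to reduce Theorem~\ref{T:main-jordan-rajchman} to the arc theorem, Theorem~\ref{T:main-arc-rajchman}, by a finite localization on the parameter circle. The scalar dyadic cascade on \(\T=\R/\mathbb Z\) is built on the same binary tree as the interval cascade. Identifying \(\T\) with \([0,1)\), the measure \(\widetilde\mu^{\T}\) restricted to a dyadic arc \(I_u\) of generation \(m\) equals \(2^{-m}\bigl(\prod_{v\preceq u}W_v\bigr)\,(\phi_u)_\#\mu^{(u)}\). Here \(\phi_u:[0,1]\to \overline{I_u}\) is the affine dyadic map, and \(\mu^{(u)}\) is an independent copy of the interval cascade \(\mu\) built from the subtree below \(u\). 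The first step is to fix \(m\geq1\) (say \(m=2\), so each piece is a quarter of the circle) and write
\[
\Gamma_\#\widetilde\mu^{\T}
=\sum_{|u|=m}2^{-m}\Bigl(\prod_{v\preceq u}W_v\Bigr)\,(\Gamma\circ\phi_u)_\#\mu^{(u)} .
\]
Boundary points carry no mass almost surely, since the cascade has no atoms under \eqref{E:intro-KP}. The decomposition is therefore exact, and the half-open versus closed convention at dyadic endpoints is harmless.

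The second step is to check that each \(\gamma_u:=\Gamma\circ\phi_u:[0,1]\to\R^2\) is a fixed nondegenerate \(C^2\) embedded arc in the sense of Definition~\ref{D:main-nondegenerate-arc}. Write \(\gamma_u(t)=\Gamma(2^{-m}(t+k))\) using the periodic lift. Then \(\gamma_u'=2^{-m}\Gamma'\) and \(\det(\gamma_u',\gamma_u'')=2^{-3m}\det(\Gamma',\Gamma'')\), so \eqref{E:intro-Jordan-nondegeneracy} gives \eqref{E:intro-arc-nondegeneracy} with constants depending only on \(m\) and \(\Gamma\). Injectivity on the closed interval \([0,1]\) holds because \(\phi_u([0,1])\) is a proper closed subarc of \(\T\) when \(m\geq1\), and \(\Gamma\) is injective on \(\T\). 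The map is \(C^2\) up to the endpoints. These arcs are deterministic, fixed, and finitely many.

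The third step applies Theorem~\ref{T:main-arc-rajchman} to each pair \((\mu^{(u)},\gamma_u)\). The result gives an event of full probability on \(\{\mu^{(u)}([0,1])>0\}\) on which \(\widehat{(\gamma_u)_\#\mu^{(u)}}(\xi)\to0\). On \(\{\mu^{(u)}([0,1])=0\}\) the pushforward is the zero measure, so decay holds trivially. Hence for each \(u\) the decay holds almost surely, with no conditioning. The prefactors \(2^{-m}\prod_{v\preceq u}W_v\) are finite random constants independent of \(\xi\). Intersecting the \(2^m\) full-probability events therefore yields \(\widehat{\Gamma_\#\widetilde\mu^{\T}}(\xi)\to0\) almost surely, in particular on \(\{\widetilde\mu^{\T}(\T)>0\}\).

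The only point requiring care is the first step: confirming that the paper's canonical circle cascade is exactly this periodized tree construction. One must check that the restriction to a generation-\(m\) arc is an affinely rescaled copy of the interval cascade with the same law as \(\mu\), so that Theorem~\ref{T:main-arc-rajchman}, whose law is that of \(\mu\), literally applies. If the circle model were instead defined with some other normalization, say a rotation-randomized or wrapped construction, I would redo the decomposition at the level of the defining martingale. The argument would be unchanged as long as each dyadic piece is a scalar multiple of an interval cascade with the same law.
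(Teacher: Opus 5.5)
Your proposal is correct and is essentially the paper's proof. The paper cuts at the first generation (\(m=1\)): it uses Lemma~\ref{L:first-generation-cutting} to write \(\Gamma_\#\widetilde\mu^{\T}=\frac{W_0^{\T}}{2}(\Gamma\circ\rho_0)_\#\mu^{\T,(0)}+\frac{W_1^{\T}}{2}(\Gamma\circ\rho_1)_\#\mu^{\T,(1)}\), notes that each \(\Gamma\circ\rho_i\) is a nondegenerate embedded arc, applies Theorem~\ref{T:main-arc-rajchman} to each piece (trivially when that piece goes extinct), and sums with the finite random prefactors. Your choice \(m=2\) is harmless but unnecessary, since \(\rho_1\) already maps \([0,1]\) injectively onto the closed half-circle from \(1/2\) to \(1\equiv0\), so \(m=1\) already yields embedded arcs.
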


The Salem equality cases follow directly from the exact formulas via the equality condition
in Jensen's inequality under the size-biased law.  Consequently, the exact Fourier-dimension
results of \cite{CaiChengFangLiQuXiao2026,CaiFangQu2026Curves}, together with
Theorems~\ref{T:main-interval-rajchman},~\ref{T:main-arc-rajchman}, and~\ref{T:main-jordan-rajchman},
complete the Fourier-decay picture described above for the canonical scalar interval
and fixed-curve models under the minimal Kahane--Peyri\`ere assumptions.

The extreme heavy-tail regime makes the distinction especially transparent.

\begin{corollary}[Pure Rajchman cascades]
\label{C:main-pure-rajchman}
Assume in addition that
\begin{equation}\label{E:intro-no-higher-moments}
        \E[W^q]=\infty
        \qquad\text{for every }q>1.
\end{equation}
Then, almost surely on the corresponding non-extinction events, 
the interval cascade \(\mu\), each prescribed fixed nondegenerate arc pushforward \(\gamma_\#\mu\), 
and each prescribed fixed nondegenerate Jordan-curve pushforward \(\Gamma_\#\widetilde\mu^{\T}\)
are Rajchman and have Fourier dimension zero.
\end{corollary}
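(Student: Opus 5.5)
The corollary has two halves for each of the three models: the Rajchman property, and vanishing Fourier dimension. The Rajchman half needs no new work. Theorems~\ref{T:main-interval-rajchman}, \ref{T:main-arc-rajchman} and~\ref{T:main-jordan-rajchman} hold under the standing assumptions \eqref{E:intro-KP} alone, and \eqref{E:intro-no-higher-moments} only strengthens these hypotheses. So, almost surely on the respective non-extinction events, $\widehat\mu(\xi)\to0$, $\widehat{\gamma_\#\mu}(\xi)\to0$ and $\widehat{\Gamma_\#\widetilde\mu^{\T}}(\xi)\to0$ as $|\xi|\to\infty$. Each model is a fixed, countable datum, so intersecting finitely many full-measure events costs nothing.

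For the Fourier-dimension half I would invoke the exact formulas of \cite{CaiChengFangLiQuXiao2026} for the interval and circle cascades and of \cite{CaiFangQu2026Curves} for the arc and Jordan-curve pushforwards. Under \eqref{E:intro-KP}, these give $\dimF$ almost surely on non-extinction as a deterministic quantity. I expect it to take the form of an infimum over $q\in(1,q_*)$, or a minimum with a term like $\tau(q)/(q-1)$ with $\tau(q)=q-1-\log_2\E[W^q]$, possibly together with a geometric cap such as $1$ for curves. The key step is to check that the admissible range of exponents collapses when $\E[W^q]=\infty$ for all $q>1$. The intended mechanism is that the formula involves the critical exponent $q_*=\sup\{q\ge1:\E[W^q]<\infty\}$, or a quantity controlled by $\E[W^q]$ for $q$ slightly above $1$. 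Under \eqref{E:intro-no-higher-moments} we have $q_*=1$, the relevant expression becomes $0$, and hence $\dimF=0$ in every model. The introduction already records that the dimension formulas identify the zero-dimension endpoint exactly with this heavy-tail regime, which supports reading the formulas this way.

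The only real care point is the one just described: matching the precise statement of the cited formulas, including their conventions at $q=1$ and their behaviour when moments are infinite, to conclude the value is exactly zero rather than merely bounded. I would also confirm that each formula holds almost surely on the same non-extinction event used here. For the curve models, I would check that the pushforward formula likewise degenerates to $0$ when $q_*=1$, rather than retaining a positive geometric contribution. If that became delicate, an independent check of $\dimF\le0$ is available. The heavy tails force, with positive probability at arbitrarily fine scales, a single dyadic cylinder carrying a mass fraction that decays subpolynomially relative to its length. This is incompatible with $|\widehat\nu(\xi)|=O(|\xi|^{-s/2})$ for any $s>0$, since that decay forces polynomial upper bounds on the mass of balls. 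Combining the two halves yields the corollary.
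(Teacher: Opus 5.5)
Your proposal follows the paper's proof: the Rajchman property comes directly from Theorems~\ref{T:main-interval-rajchman}, \ref{T:main-arc-rajchman} and~\ref{T:main-jordan-rajchman}, and $\dimF=0$ comes from the exact formulas of \cite{CaiChengFangLiQuXiao2026,CaiFangQu2026Curves}. On the convention you flag, those formulas are not infima over an admissible range (an empty range would give $+\infty$); they are suprema over $q>1$ of terms of the form $\max\{0,\cdot\}$, each declared zero whenever $\E[W^q]=\infty$, so under \eqref{E:intro-no-higher-moments} every term vanishes and the value is exactly $0$.
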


The additional hypothesis in Corollary~\ref{C:main-pure-rajchman} is not used to prove Rajchman decay.  
It is used only in the exact Fourier-dimension formulas to identify a regime in which all positive power-decay exponents vanish. 
Thus the conclusion of the main theorems is genuinely qualitative:
at this endpoint the Fourier transform tends to zero although the Fourier dimension is zero.

\subsection{Ideas of the proof}
\label{SS:introduction-proof-ideas}

The proof consists of two components joined by a common dyadic kernel.  
The first is a probabilistic endpoint mechanism for scalar cascades under only the Kahane--Peyri\`ere assumptions; 
it is independent of the phase and yields the interval theorem as well as the random estimates needed for curves.  
The second is a deterministic, endpoint-safe decomposition of the nonlinear phase on a nondegenerate arc.  
The coefficients produced by this decomposition have exactly the kernel controlled by the probabilistic mechanism.  
This matching is what allows the argument to remain effective at the zero-exponent endpoint,
where no polynomial estimate is available.

\boldparagraph{A spine-gauge lower-deviation principle.}
For a word \(u\in\{0,1\}^n\), let
\begin{equation}\label{E:intro-Au-Vu}
A_u=2^{-n}\prod_{m=1}^{n}W_{u|m},
\qquad
V(u)=-\log_2A_u,
\end{equation}
with \(V(u)=+\infty\) if \(A_u=0\).  For \(0<\theta<\chi\), define the
lower-deviation mass
\begin{equation}\label{E:intro-Ln}
        L_n(\theta)
        =
        \sum_{|u|=n}
        A_u\,\mathbf{1}_{\{V(u)\leq\theta n\}}.
\end{equation}
Under the size-biased law, \(\chi\) is the typical linear growth rate of \(V\).  
Thus \(L_n(\theta)\) is the generation-\(n\) mass carried by lower-deviation cylinders, 
equivalently those with \(A_u\geq2^{-\theta n}\).  
The first key estimate is the qualitative limit
\begin{equation}\label{E:intro-Ln-limit}
        L_n(\theta)\longrightarrow0
        \qquad\text{almost surely for every }0<\theta<\chi.
\end{equation}
Its proof combines a size-biased spine, complete convergence for the spine random walk on dyadic blocks, 
a many-to-one identity, and a hereditary amplification argument.  
The use of size-biased changes of measure on trees is classical; 
see, for example, \cite{LyonsPemantlePeres1995}. 
In place of the polynomial lower-deviation bounds available under higher moments, 
\eqref{E:intro-Ln-limit} shows directly that abnormally large cylinders carry asymptotically negligible total mass.

The bare convergence in \eqref{E:intro-Ln-limit} is not by itself stable under the sums over generations arising in a Fourier expansion.  
The form needed there is the following shifted convolution consequence: 
for fixed \(h\geq0\) and \(C_0<\infty\),
\begin{equation}\label{E:intro-shifted-convolution}
\sup_{|m-n|\leq C_0}
\sum_{k=1}^{n+h}
\min\{1,2^{k-m}\}L_k(\theta)
\longrightarrow0
\qquad\text{almost surely}.
\end{equation}
The bounded shift \(|m-n|\leq C_0\) is essential: 
it permits a purely qualitative lower-deviation estimate to survive the accumulation over all generations.  
Formula \eqref{E:intro-shifted-convolution} is the point at which the spine gauge becomes compatible with oscillatory analysis.

\boldparagraph{Terminal approximation and a capped martingale skeleton.}
A separate issue is the passage from a finite cascade level to the limiting measure.  
Because weak convergence is not uniform at growing frequencies, 
we introduce an adaptive terminal cutoff.  
Large terminal cylinders are discarded at a threshold of order \(1/n\), 
and both their realized descendant mass and their predictable compensator are shown to vanish using exact dimensionality,
the spine estimate, and only \(L^1\)-integrability of the terminal mass.

The remaining finite-level oscillatory expansion is arranged as a stopped martingale skeleton.  
Its increments are predictably capped at size \(b/n\).
Freedman's inequality then gives the exponential bound \eqref{E:intro-Freedman-scale}, 
which is strong enough, for a suitably small fixed \(b>0\), to absorb the entropy of the planar frequency grid.  
The uncapped remainder is separated into two positive terms: 
the realized large jumps and their predictable tail compensator.  
The former are controlled by \eqref{E:intro-shifted-convolution}; 
the latter are controlled by the same lower-deviation mechanism together with
\(
\E[W\mathbf{1}_{\{W>t\}}]\to0
\).
Thus the proof isolates three analytically distinct contributions---centered capped fluctuations, 
realized large jumps, and predictable compensators---and applies concentration only after predictable capping has removed the need for a higher moment.

\boldparagraph{Endpoint-safe phase geometry.}
The deterministic component treats the nonlinear phase in
\eqref{E:intro-curve-phase}.  For a nondegenerate arc, the tangent angle
\(\Theta_\gamma\) satisfies
\begin{equation}\label{E:intro-tangent-angle}
        \Theta_\gamma'(t)
        =
        \frac{\det(\gamma'(t),\gamma''(t))}{|\gamma'(t)|^2},
\end{equation}
and is therefore monotone and bi-Lipschitz.  
Consequently, the derivative sublevel sets of \(t\mapsto\xi\cdot\gamma(t)\) have geometry that is uniform in the direction of \(\xi\), 
even when a stationary region meets an endpoint of the parameter interval.

At annular scale \(2^n\leq|\xi|<2^{n+1}\), we construct a partition consisting of two endpoint pieces, 
a small-derivative piece, and dyadic derivative bands.
For a fixed derivative cutoff, the endpoint and small-derivative pieces,
together with the bands below the oscillatory cutoff, 
form a safe region contained in a bounded union of intervals of length
\begin{equation}\label{E:intro-safe-length}
        O_\gamma(2^{-j}+2^{-n/2}).
\end{equation}
Its contribution is controlled by atomlessness and a weak-transfer lemma,
without any polynomial local-mass estimate.  
This controls moving stationary regions, including endpoint stationary regimes, at the qualitative endpoint.

Only \(O_{\gamma,j}(1)\) oscillatory bands remain.  
Their phase-bin scales \(m_{\xi,d}\) satisfy \(|m_{\xi,d}-n|\leq C_{\gamma,j}\).  
If \(\chi_{\xi,d}\) denotes the corresponding localization, 
integration by parts on a dyadic interval \(J\) of generation \(\ell+1\) gives
\begin{equation}\label{E:intro-coefficient-envelope}
\left|
2^\ell\int_J
 e^{-2\pi i\xi\cdot\gamma(t)}\chi_{\xi,d}(t)\,dt
\right|
\leq
C_\gamma\min\{1,2^{\ell-m_{\xi,d}}\}.
\end{equation}
This is exactly the kernel in \eqref{E:intro-shifted-convolution}.  
The phase geometry and the spine estimate therefore meet without loss of scale: 
the number of active bands is bounded, 
and every active phase scale remains within bounded distance of the annular scale.

\boldparagraph{Annular assembly.}
The probabilistic mechanism is first assembled on one-dimensional annular grids, 
together with the adaptive terminal cutoff and the frequency Lipschitz bound, 
to prove the interval theorem.  
For a curved pushforward, the safe-region estimate and the stopped-skeleton estimate on the oscillatory bands are combined on a deterministic planar grid.  
The capped concentration bound pays for the grid entropy, 
while the lower-deviation estimates remove the uncapped and terminal remainders.  
Borel--Cantelli then gives a single almost-sure estimate on all sufficiently large grids, 
and the frequency Lipschitz bound recovers the full annuli.  
This proves the fixed-arc theorem.
The Jordan-curve theorem follows by cutting the parameter circle at the first dyadic generation and applying the arc argument to the two descendant interval cascades.

The proof is therefore not obtained by taking a zero-exponent limit of an existing polynomial estimate.  
It builds an annular Fourier-decay mechanism whose essential inputs are qualitative and which remains valid at the zero-Fourier-dimension endpoint.

\subsection{Scope and organization}
\label{SS:introduction-scope}

In the curve theorems, the parametrized curve is fixed before the cascade is sampled.  
For each prescribed arc or Jordan curve satisfying the stated nondegeneracy hypotheses, 
the Rajchman conclusion holds almost surely on non-extinction.  
We do not claim a single probability-one event valid simultaneously for all curves in a class; 
the exceptional null set and the constants in the proof may depend on the chosen parametrization.  
The parametrization in the Jordan-curve theorem is part of the model, not merely a choice of notation: the dyadic cascade is constructed on
the parameter circle, and a nonlinear reparametrization generally changes the pushforward measure. Accordingly, the theorem is stated separately for each fixed parametrization satisfying the stated nondegeneracy conditions.

The conclusions are qualitative, and this is sharp with respect to power-law decay.  
We prove that the relevant Fourier transforms vanish at infinity without asserting a positive polynomial exponent.  
In the regime of Corollary~\ref{C:main-pure-rajchman}, 
the resulting measures have Fourier dimension zero, so no positive uniform power law can hold.  
The qualitative endpoint is therefore intrinsic to the problem rather than an artifact of the proof.

The remainder of the paper is organized according to the separation between the probabilistic and geometric components described above.  
Section~\ref{S:preliminaries} sets up the dyadic-tree and cascade notation, descendant decompositions, 
Fourier localization, and curve geometry.  
Section~\ref{S:spine-engine} develops the spine-gauge and stopped-martingale mechanisms and proves the interval theorem.  
Section~\ref{S:arc-geometry} establishes the endpoint-safe phase decomposition for fixed nondegenerate arcs, 
and Section~\ref{S:arc-assembly} combines it with the probabilistic estimates to prove the fixed-arc theorem.  
Finally, Section~\ref{S:jordan-cutting} proves the Jordan-curve theorem by first-generation dyadic cutting of the parameter circle 
and concludes by combining the three Rajchman theorems with the exact Fourier-dimension formulas to establish Corollary~\ref{C:main-pure-rajchman}.

\section{Cascade preliminaries and notation}\label{S:preliminaries}

This section collects the cascade notation and the auxiliary facts used throughout the paper.  
The interval cascade on \([0,1]\) is the primary object.  
A separate copy on
\[
        \mathbb T=\mathbb R/\mathbb Z
\]
is introduced for the Jordan-curve reduction.

Unless explicitly stated otherwise, the cascade weight \(W\) satisfies
\[
        W\geq0,
        \qquad
        \mathbb EW=1,
        \qquad
        \mathbb E[W\log_2^+W]<\infty,
        \qquad
        \mathbb E[W\log_2W]<1,
\]
where
\[
        \log_2^+x=\max\{\log_2x,0\}
        =\log_2(\max\{x,1\}),
        \qquad
        0\log_20=0.
\]
We write
\[
        \chi=1-\mathbb E[W\log_2W]>0.
\]
No moment assumption of the form
\(
        \mathbb E[W^{1+\eta}]<\infty
\)
is imposed.

\subsection{The dyadic tree and the interval cascade}
\label{SS:preliminaries-tree}

Let \(\mathbb N_0=\{0,1,2,\ldots\}\) and
\[
        \{0,1\}^*=\bigcup_{n\in\mathbb N_0}\{0,1\}^n
\]
be the rooted binary tree, whose root is the empty word \(\varnothing\).
For
\[
        u=(u_1,\ldots,u_n)\in\{0,1\}^n,
\]
write \(|u|=n\), and for \(0\leq k\leq n\) set
\[
        u|k=(u_1,\ldots,u_k),
        \qquad
        u|0=\varnothing.
\]
Concatenation of words is denoted by juxtaposition.

For \(u=(u_1,\ldots,u_n)\in\{0,1\}^n\), define
\[
        a_u=\sum_{j=1}^n u_j2^{-j},
        \qquad
        a_\varnothing=0.
\]
Set \(I_\varnothing=[0,1]\), and for \(|u|=n\geq1\) let
\[
        I_u=
        \begin{cases}
        [a_u,a_u+2^{-n}),&u\neq(1,\ldots,1),\\
        [a_u,1],&u=(1,\ldots,1).
        \end{cases}
\]
For each \(n\in\mathbb N_0\), define
\[
\mathcal{D}_n([0,1])
=
\{I_u:|u|=n\}.
\]
Then \(\mathcal{D}_n([0,1])\) is a partition of \([0,1]\) into dyadic intervals of length \(2^{-n}\).
If \(J=I_u\in\mathcal D_{n+1}([0,1])\), its dyadic parent is
\[
J^-=I_{u|n}.
\]
For \(t\in[0,1]\), let \(I_n(t)\) be the unique member of \(\mathcal D_n([0,1])\) containing \(t\) under this endpoint convention.
The convention is immaterial on the full-probability event on which the limiting cascade assigns zero mass to all dyadic endpoints.

Attach independent copies
\[
        \{W_u:u\in\{0,1\}^*,\ |u|\geq1\}
\]
of \(W\) to the non-root vertices.  
For \(|u|=n\), define
\[
        A_u=2^{-n}\prod_{k=1}^n W_{u|k},
        \qquad
        A_\varnothing=1.
\]
The level-\(n\) cascade measure is
\[
        \mathrm d\mu_n(t)
        =
        \sum_{|u|=n}2^nA_u\mathbf 1_{I_u}(t)\,\mathrm dt,
\]
so that
\[
        \mu_n(I_u)=A_u
        \qquad(|u|=n).
\]
Its total mass is
\[
        Z_n=\mu_n([0,1])=\sum_{|u|=n}A_u.
\]
Then \((Z_n)_{n\geq0}\) is a nonnegative martingale with \(\mathbb EZ_n=1\).

The weak convergence of the cascade measures, the \(L^1\)-convergence of the total-mass martingale, 
and its non-degeneracy are classical consequences of the Kahane--Peyri\`ere theory; 
see \cite{Kahane1985SomeRandomSeries,Kahane1987PositiveMartingales,KahanePeyriere1976}.

\begin{fact}\label{F:preliminaries-standard-cascade-facts}
Under the assumptions above, there is a finite Borel measure \(\mu\) on \([0,1]\) 
and a nonnegative random variable \(Z_\infty\) such that:
\begin{enumerate}
\item \(\mu_n\Rightarrow\mu\) weakly almost surely;

\item \(Z_n\to Z_\infty\) almost surely and in \(L^1\), and
\[
        \mu([0,1])=Z_\infty,
        \qquad
        \mathbb{E}Z_\infty=1;
\]

\item the non-extinction event
\[
        \mathcal S
        =
        \{Z_\infty>0\}
        =
        \{\mu([0,1])>0\}
\]
has positive probability;

\item on \(\mathcal S\), the measure \(\mu\) is exact-dimensional of dimension \(\chi\); 
more precisely,
\[
        \lim_{n\to\infty}
        -\frac{1}{n}\log_2\mu(I_n(t))
        =
        \chi
\]
for \(\mu\)-almost every \(t\); 
see \cite[Theorem~12(1)]{Barral2014Mandelbrot};

\item consequently, since \(\chi>0\), the measure \(\mu\) is atomless on \(\mathcal S\).
\end{enumerate}
\end{fact}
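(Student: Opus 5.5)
The statement collects classical facts, so the plan is to assemble them from the Kahane--Peyri\`ere theory and verify the short deductions, rather than reprove everything. \textbf{Item (1).} For each fixed word $u$, the sequence $n\mapsto\mu_n(I_u)$, $n\geq|u|$, is a nonnegative martingale. Indeed,
\[
\mu_{n+1}(I_u)=\sum_{v}A_{uv}\bigl(W_{uv0}+W_{uv1}\bigr)/2 ,
\]
where $v$ ranges over words of length $n-|u|$, and $\E W=1$. So for all $u$ simultaneously, $\mu_n(I_u)\to\lambda(u)$ almost surely. The limits inherit additivity: $\lambda(u)=\lambda(u0)+\lambda(u1)$. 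I would first build a measure $\tilde\mu$ on the Cantor space $\{0,1\}^{\mathbb N}$. There, cylinders are compact-open, so this finitely additive set function is automatically countably additive and Carath\'eodory applies. I would then set $\mu=\pi_\#\tilde\mu$, with $\pi$ the binary-expansion map. To get weak convergence, I avoid evaluating $\mu$ on half-open dyadic intervals, since boundary atoms could in principle interfere. Instead, take $f\in C[0,1]$ and compare both $\int f\,d\mu_n$ and $\int f\,d\mu$ with $\sum_{|u|=k}f(a_u)\lambda_n(u)$. The error is at most $\operatorname{osc}_{2^{-k}}(f)\,Z_n$, respectively $\operatorname{osc}_{2^{-k}}(f)\,Z_\infty$. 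Letting $n\to\infty$ and then $k\to\infty$ gives $\int f\,d\mu_n\to\int f\,d\mu$ on the almost-sure event where all countably many martingales converge.

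\textbf{Items (2)--(3).} Almost-sure convergence $Z_n\to Z_\infty=\lambda(\varnothing)=\mu([0,1])$ is part of item (1). For $L^1$ convergence I invoke the Kahane--Peyri\`ere theorem: under $\E[W\log_2 W]<1$, $(Z_n)$ is uniformly integrable. Equivalently, via the size-biased spine argument of Lyons--Pemantle--Peres, under the size-biased law the spine random walk $V$ has drift $\chi>0$, and $\E[W\log^+W]<\infty$ makes the spine weights' contributions summable. Hence $\E Z_\infty=1$, so $\mathbb P(Z_\infty>0)>0$.

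\textbf{Item (4)} is the one genuinely delicate point, and I would cite \cite[Theorem~12(1)]{Barral2014Mandelbrot}. For orientation, the mechanism is the exact identity
\[
\lambda(u)=A_u\,Z_\infty^{(u)},
\]
where $Z_\infty^{(u)}$ is an independent copy of $Z_\infty$ for the subtree at $u$. Under the Peyri\`ere measure $\mathbb Q(d\omega,dt)=\mu_\omega(dt)\,\mathbb P(d\omega)$, the coordinate path is the spine. The strong law of large numbers then gives $-\tfrac1n\log_2A_{I_n(t)}\to\E[W\cdot(1-\log_2W)]=\chi$. The hard part, and what I expect to be the main obstacle, is showing $\tfrac1n\log Z_\infty^{(I_n(t))}\to0$ $\mathbb Q$-almost surely. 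The upper side follows from Borel--Cantelli using $\E[Z_\infty\log^+Z_\infty]<\infty$ under size-biasing. The lower side requires controlling small values of $Z_\infty$ along the spine; I would use the fixed-point equation $Z_\infty=\tfrac12(W_0Z^{(0)}+W_1Z^{(1)})$ to get a left-tail bound. Half-open versus closed intervals are immaterial once atomlessness holds, but that is circular, so I would work with $\lambda(u)$ throughout.

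\textbf{Item (5).} Fix an outcome in the full-probability event where item (4) holds and suppose $\mu(\{t_0\})=a>0$. Then $t_0$ lies in the full-measure set where the limit in item (4) holds, because that set has full $\mu$-measure and therefore cannot omit a point carrying positive mass. Its dyadic ancestors satisfy $\mu(I_n(t_0))\geq a$ for all $n$, so $-\tfrac1n\log_2\mu(I_n(t_0))\to0$, contradicting $\chi>0$. For a dyadic endpoint $t_0$, use the closed dyadic interval with the $\lambda$-values, which still has mass $\geq a$.
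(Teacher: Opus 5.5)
Your plan matches the paper's, which gives no argument for this Fact beyond citing the Kahane--Peyri\`ere theory for items (1)--(3) and \cite[Theorem~12(1)]{Barral2014Mandelbrot} for item (4). Your self-contained proof of (1), via the Cantor-space construction and the oscillation comparison, is correct, and so is your deduction of (5) from (4). The dyadic-endpoint caveat in (5) is not needed: $t_0\in I_n(t_0)$ by the paper's convention, so $\mu(I_n(t_0))\geq\mu(\{t_0\})$ directly.

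The orientation sketch for (4), however, would fail if you tried to carry it out, and it puts the difficulty on the wrong side.

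\emph{The lower side is immediate.} Under $\mathbb{Q}$, each $Z_\infty^{(I_n(t))}$ has the size-biased law $\mathbb{E}[Z_\infty\mathbf{1}_{\{Z_\infty\in\cdot\}}]$. Hence $\sum_n\mathbb{Q}(Z_\infty^{(I_n(t))}<2^{-\varepsilon n})\leq\sum_n2^{-\varepsilon n}<\infty$, and no left-tail estimate from the fixed-point equation is required.

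\emph{The upper side is where the problem is.} Borel--Cantelli there needs $\mathbb{E}[Z_\infty\log^+Z_\infty]<\infty$, and the standing hypotheses do not give this. By the spine decomposition, the size-biased $Z_\infty$ dominates $2^{\sup_k(-S_k^\star)}$ times an independent copy of $\tfrac12WZ_\infty$. By Kiefer--Wolfowitz, $\sup_k(-S_k^\star)$ is integrable only if $\mathbb{E}[W(\log^+W)^2]<\infty$. So the required moment fails, for example, for a suitably normalized $W$ with $\mathbb{P}(W>x)\asymp x^{-1}(\log x)^{-5/2}$. That weight lies squarely in the heavy-tail regime the paper is about.

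\emph{What replaces the moment is monotonicity plus stationarity.} The measure $\mathbb{Q}$ is invariant under $\theta(\omega,t)=(\omega^{(t_1)},2t\bmod 1)$. One has $\log_2\mu(I_n(t))-\log_2Z_\infty=\sum_{k<n}g\circ\theta^k$ with $g=\log_2\bigl(\mu(I_1(t))/Z_\infty\bigr)\leq0$. Write $g=\log_2(W_{t_1}/2)+f$, where $f=\log_2Z_\infty\circ\theta-\log_2Z_\infty$ is a coboundary. Then $f^+\leq(\log_2(2/W_{t_1}))^+\in L^1(\mathbb{Q})$, so Birkhoff's theorem gives almost-sure convergence of $n^{-1}(\log_2Z_\infty\circ\theta^n-\log_2Z_\infty)$. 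Stationarity, together with $\mathbb{Q}(Z_\infty=0)=0$, makes this converge to $0$ in probability, so the almost-sure limit is $0$. Combined with the strong law for the spine weights, this yields (4) under $\mathbb{E}[W\log^+W]<\infty$ alone.

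Since you ultimately cite Barral, your plan stands. The sketched Borel--Cantelli mechanism, though, is not a valid proof at this level of integrability.
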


\subsection{Descendant cascades and lower-deviation notation}
\label{SS:preliminaries-descendants}

For a finite word \(u\), the descendant environment below \(u\) is
\[
        \{W_{uv}:v\in\{0,1\}^*,\ |v|\geq1\}.
\]
For \(v\in\{0,1\}^m\), define
\[
        A_v^{(u)}
        =2^{-m}\prod_{\ell=1}^m W_{u(v|\ell)},
        \qquad
        A_\varnothing^{(u)}=1,
\]
and
\[
        \mathrm d\mu_m^{(u)}(t)
        =
        \sum_{|v|=m}2^mA_v^{(u)}\mathbf 1_{I_v}(t)\,\mathrm dt.
\]
Its total mass is
\[
        Z_m^{(u)}
        =\mu_m^{(u)}([0,1])
        =\sum_{|v|=m}A_v^{(u)}.
\]
Simultaneously for every finite word \(u\), almost surely,
\[
        \mu_m^{(u)}\Rightarrow\mu^{(u)},
        \qquad
        Z_m^{(u)}\longrightarrow
        Z_\infty^{(u)}:=\mu^{(u)}([0,1]).
\]
For each fixed generation \(n\), the family
\(
\{Z_\infty^{(u)}:|u|=n\}
\)
consists of independent copies of \(Z_\infty\) and is independent of the weights up to generation \(n\).  
On the simultaneous convergence and endpoint-null event,
\[
        \mu(I_u)=A_uZ_\infty^{(u)}
        \qquad(u\in\{0,1\}^*).
\]

For \(u\in\{0,1\}^*\), set
\[
        V(u)=
        \begin{cases}
        -\log_2A_u,&A_u>0,\\
        +\infty,&A_u=0,
        \end{cases}
\]
so that \(A_u=2^{-V(u)}\), with \(2^{-\infty}=0\).  
For \(0<\theta<\chi\), define the lower-deviation mass
\[
        L_n(\theta)
        =
        \sum_{|u|=n}
        A_u\mathbf 1_{\{V(u)\leq\theta n\}}.
\]
Section~\ref{S:spine-engine} proves that \(L_n(\theta)\to0\) almost surely for every \(0<\theta<\chi\).

\subsection{The circle cascade and first-generation cutting}
\label{SS:preliminaries-circle-cascade}

Let \(m_{\mathbb T}\) be normalized Haar probability measure on \(\mathbb T\).  
For \(|u|=n\), define the dyadic arc
\[
        I_u^{\mathbb T}
        =\{t+\mathbb Z:t\in[a_u,a_u+2^{-n})\}.
\]
On a possibly enlarged probability space, let
\[
        \{W_u^{\mathbb T}:u\in\{0,1\}^*,\ |u|\geq1\}
\]
be independent copies of \(W\), independent also of the interval-cascade environment.  
Set
\[
        A_u^{\mathbb T}
        =2^{-n}\prod_{k=1}^n W_{u|k}^{\mathbb T},
        \qquad
        A_\varnothing^{\mathbb T}=1,
\]
and
\[
        \mathrm d\widetilde\mu_n^{\mathbb T}(x)
        =
        \sum_{|u|=n}
        2^nA_u^{\mathbb T}
        \mathbf 1_{I_u^{\mathbb T}}(x)\,
        \mathrm dm_{\mathbb T}(x).
\]
Then
\[
        \widetilde Z_n^{\mathbb T}
        :=\widetilde\mu_n^{\mathbb T}(\mathbb T)
        =\sum_{|u|=n}A_u^{\mathbb T}
\]
is a nonnegative mean-one martingale.  
By the same cascade convergence theorem,
\[
        \widetilde\mu_n^{\mathbb T}
        \Rightarrow
        \widetilde\mu^{\mathbb T},
        \qquad
        \widetilde Z_n^{\mathbb T}
        \longrightarrow
        \widetilde Z_\infty^{\mathbb T}
        =\widetilde\mu^{\mathbb T}(\mathbb T)
\]
almost surely, with convergence of the total masses also in \(L^1\).  
The circle non-extinction event is
\[
        \mathcal S^{\mathbb T}
        =\{\widetilde Z_\infty^{\mathbb T}>0\}.
\]

For \(i\in\{0,1\}\), let
\[
        \rho_0(t)=\frac t2,
        \qquad
        \rho_1(t)=\frac{1+t}{2}\pmod1,
        \qquad t\in[0,1].
\]
The descendant environment below the first-generation vertex \(i\) defines an interval cascade \(\mu^{\mathbb T,(i)}\).  
More explicitly, for \(v\in\{0,1\}^m\), set
\[
        A_v^{\mathbb T,(i)}
        =2^{-m}\prod_{\ell=1}^m
        W_{i(v|\ell)}^{\mathbb T},
\]
and let \(\mu^{\mathbb T,(i)}\) be the almost-sure weak limit of
\[
        \mathrm d\mu_m^{\mathbb T,(i)}(t)
        =
        \sum_{|v|=m}
        2^mA_v^{\mathbb T,(i)}
        \mathbf 1_{I_v}(t)\,\mathrm dt.
\]
The measures \(\mu^{\mathbb T,(0)}\) and \(\mu^{\mathbb T,(1)}\) are independent copies in law of \(\mu\), 
and are independent of \(W_0^{\mathbb T}\) and \(W_1^{\mathbb T}\).

\begin{lemma}[First-generation cutting identity]
\label{L:first-generation-cutting}
Almost surely, for each \(i\in\{0,1\}\),
\[
        \widetilde\mu^{\mathbb T}\big|_{I_i^{\mathbb T}}
        =
        \frac{W_i^{\mathbb T}}2
        (\rho_i)_\#\mu^{\mathbb T,(i)}.
\]
Consequently, for every continuous map
\(F:\mathbb T\to\mathbb R^2\),
\[
        F_\#\widetilde\mu^{\mathbb T}
        =
        \frac{W_0^{\mathbb T}}2
        (F\circ\rho_0)_\#\mu^{\mathbb T,(0)}
        +
        \frac{W_1^{\mathbb T}}2
        (F\circ\rho_1)_\#\mu^{\mathbb T,(1)}.
\]
\end{lemma}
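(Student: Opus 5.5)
The plan is to verify the identity at the level of the finite-generation approximations, then pass to the weak limit. The pushforward formula will follow from the restriction identity because \(I_0^{\T}\) and \(I_1^{\T}\) partition \(\T\).

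\textbf{Finite-level identity.} Fix \(i\in\{0,1\}\) and \(m\geq0\). Every word of length \(m+1\) beginning with \(i\) has the form \(u=iv\) with \(|v|=m\), and
\[
A_{iv}^{\T}=2^{-(m+1)}W_i^{\T}\prod_{\ell=1}^mW^{\T}_{i(v|\ell)}=\tfrac{W_i^{\T}}{2}A_v^{\T,(i)} .
\]
Moreover \(\rho_i\) maps \(I_v\) bijectively onto \(I_{iv}^{\T}\), up to the endpoint conventions, a Lebesgue-null set, and multiplies length by \(1/2\). Hence \((\rho_i)_\#(\mathbf 1_{I_v}\,dt)=\tfrac12\,\mathbf 1_{I^{\T}_{iv}}\,dm_{\T}\cdot 2\). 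Explicitly, the pushforward of Lebesgue on \(I_v\) has density \(2\) on \(I^{\T}_{iv}\). Summing over \(v\) gives
\[
\widetilde\mu^{\T}_{m+1}\big|_{I_i^{\T}}=\sum_{|v|=m}2^{m+1}A^{\T}_{iv}\mathbf 1_{I^{\T}_{iv}}\,dm_{\T}
=\tfrac{W_i^{\T}}{2}\,(\rho_i)_\#\mu_m^{\T,(i)} .
\]
This is an exact identity for every \(m\), on every realization.

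\textbf{Passage to the limit.} By the convergence facts stated above, almost surely \(\mu_m^{\T,(i)}\Rightarrow\mu^{\T,(i)}\) on \([0,1]\) and \(\widetilde\mu^{\T}_{m+1}\Rightarrow\widetilde\mu^{\T}\) on \(\T\). Since \(\rho_i:[0,1]\to\T\) is continuous, pushforward preserves weak convergence, so the right-hand side converges weakly to \(\tfrac{W_i^{\T}}{2}(\rho_i)_\#\mu^{\T,(i)}\).

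The main obstacle is the left-hand side. Restriction to the half-open arc \(I_i^{\T}\) is not weakly continuous in general, and the endpoint conventions make this delicate. I would handle it in three steps:
\begin{enumerate}
\item Write \(\widetilde\mu^{\T}_{m+1}=\sum_{i}\tfrac{W_i^{\T}}{2}(\rho_i)_\#\mu_m^{\T,(i)}\) and take weak limits termwise. This gives \(\widetilde\mu^{\T}=\sum_i\tfrac{W_i^{\T}}{2}(\rho_i)_\#\mu^{\T,(i)}\) exactly, without any restriction issue.
\item Use atomlessness of \(\mu^{\T,(i)}\) from Fact~\ref{F:preliminaries-standard-cascade-facts}(5). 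The measures \(\mu^{\T,(i)}\) are copies of \(\mu\), so on non-extinction they are atomless, and on extinction they vanish. Hence \((\rho_i)_\#\mu^{\T,(i)}\) gives no mass to the endpoints \(0\) and \(\tfrac12\), and it is carried by the closed arc \(\overline{I_i^{\T}}\).
\item Restrict both sides of the identity from step 1 to \(I_i^{\T}\). The \(i\)-th term restricts to itself. The other term is supported on \(\overline{I_{1-i}^{\T}}\), which meets \(I_i^{\T}\) only at an endpoint it does not charge, so its restriction vanishes. This gives the restriction identity.
\end{enumerate}

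\textbf{Pushforward formula.} For continuous \(F\), write \(\widetilde\mu^{\T}=\widetilde\mu^{\T}|_{I_0^{\T}}+\widetilde\mu^{\T}|_{I_1^{\T}}\), push forward by \(F\), and use \(F_\#(\rho_i)_\#=(F\circ\rho_i)_\#\). All statements hold on the intersection of countably many almost-sure events, so they hold almost surely.
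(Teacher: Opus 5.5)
Your proof is correct. It shares the paper's finite-level identity $A^{\mathbb T}_{iv}=\frac{W_i^{\mathbb T}}{2}A_v^{\mathbb T,(i)}$, but it passes to the limit by a different route. (Your intermediate display $(\rho_i)_\#(\mathbf 1_{I_v}\,dt)=\tfrac12\,\mathbf 1_{I^{\mathbb T}_{iv}}\,dm_{\mathbb T}\cdot 2$ is garbled, since it amounts to density $1$; the density $2$ you state immediately afterwards is the right one, and your final finite-level identity is correct.)

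The paper tests the restricted finite-level identity against $\varphi\in C(\mathbb T)$. It then lets $m\to\infty$ on the left-hand side by using that $I_i^{\mathbb T}$ is a continuity set of the circle limit. In other words, it takes $\widetilde\mu^{\mathbb T}(\partial I_i^{\mathbb T})=0$ as part of the standing almost-sure event.

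You instead pass to the limit in the unrestricted decomposition $\widetilde\mu^{\mathbb T}_{m+1}=\sum_i\frac{W_i^{\mathbb T}}{2}(\rho_i)_\#\mu^{\mathbb T,(i)}_m$, where weak convergence applies with no boundary issue. Only afterwards do you restrict, using atomlessness of the interval descendants $\mu^{\mathbb T,(i)}$, which is Fact~\ref{F:preliminaries-standard-cascade-facts}(5) applied to copies of $\mu$.

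Your route is slightly more self-contained in two ways.
\begin{enumerate}
\item It needs no a priori regularity of $\widetilde\mu^{\mathbb T}$. In fact it produces the nullity of the points $0$ and $\tfrac12$ for $\widetilde\mu^{\mathbb T}$ as a byproduct.
\item The pushforward formula, which is the only part of the lemma used in the Jordan-curve theorem, already follows from your step~1 by applying $F_\#$, with no endpoint input at all. So the detour through the restriction identity in your last paragraph is not needed for that consequence.
\end{enumerate}
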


\begin{proof}
For \(i\in\{0,1\}\), \(m\geq0\), and
\(v\in\{0,1\}^m\), one has
\[
        A_{iv}^{\mathbb T}
        =
        \frac{W_i^{\mathbb T}}2
        A_v^{\mathbb T,(i)}.
\]
Moreover, \(I_{iv}^{\mathbb T}\) and \(\rho_i(I_v)\) differ only at dyadic endpoints. 
Since the level measures are absolutely continuous,  it follows that, for every \(\varphi\in C(\mathbb T)\),
\[
        \int_{I_i^{\mathbb T}}
        \varphi\,\mathrm d\widetilde\mu_{m+1}^{\mathbb T}
        =
        \frac{W_i^{\mathbb T}}2
        \int_{[0,1]}
        \varphi\circ\rho_i\,
        \mathrm d\mu_m^{\mathbb T,(i)}.
\]

On the probability-one event on which the relevant weak convergences hold and
\[
        \widetilde\mu^{\mathbb T}
        \bigl(\partial I_i^{\mathbb T}\bigr)
        =
        0,
        \qquad i=0,1,
\]
we may let \(m\to\infty\). The right-hand side converges by weak convergence, 
since \(\varphi\circ\rho_i\) is continuous, 
while the left-hand side converges because \(I_i^{\mathbb T}\) is a continuity set for \(\widetilde\mu^{\mathbb T}\). 
Hence
\[
        \widetilde\mu^{\mathbb T}\big|_{I_i^{\mathbb T}}
        =
        \frac{W_i^{\mathbb T}}2
        (\rho_i)_\#\mu^{\mathbb T,(i)}.
\]
Summing over \(i=0,1\) and applying \(F_\#\) proves the second identity.
\end{proof}

\subsection{Curves and Fourier notation}
\label{SS:preliminaries-curves}

We use the notions of a fixed nondegenerate \(C^2\) embedded arc 
and a fixed nondegenerate \(C^2\) Jordan curve introduced in 
Definitions~\ref{D:main-nondegenerate-arc} and \ref{definition:fixed-curve}.  
For such an arc \(\gamma\) and such a Jordan curve \(\Gamma\), write
\[
        \mu_\gamma=\gamma_\#\mu,
        \qquad
        \mu_\Gamma^{\mathbb T}
        =\Gamma_\#\widetilde\mu^{\mathbb T}.
\]
All constants in fixed-curve statements may depend on the chosen parametrized curve; 
no uniformity over families of curves is asserted.

For \(i\in\{0,1\}\), let \(\Gamma_i=\Gamma\circ\rho_i\).  
Then
\[
        \Gamma_i'(t)=\frac12\Gamma'(\rho_i(t)),
        \qquad
        \Gamma_i''(t)=\frac14\Gamma''(\rho_i(t)),
\]
and hence
\[
        \det(\Gamma_i'(t),\Gamma_i''(t))
        =
        \frac18
        \det(\Gamma'(\rho_i(t)),\Gamma''(\rho_i(t))).
\]
Thus each \(\Gamma_i\) is a fixed nondegenerate embedded arc.

For a fixed arc \(\gamma\) and \(\xi\in\mathbb R^2\setminus\{0\}\), set
\[
        \phi_\xi(t)=-2\pi\xi\cdot\gamma(t).
\]
With the Fourier convention fixed in the introduction,
\[
        \widehat{\mu_\gamma}(\xi)
        =
        \int_0^1
        \mathrm e^{i\phi_\xi(t)}\,\mathrm d\mu(t).
\]
For \(n\geq1\), define
\[
        \mathcal A_n
        =\{\xi\in\mathbb R^2:2^n\leq|\xi|<2^{n+1}\},
        \qquad
        S_{\gamma,n}
        =\sup_{\xi\in\mathcal A_n}
        |\widehat{\mu_\gamma}(\xi)|.
\]
The fixed-arc Rajchman theorem is equivalent to
\[
        S_{\gamma,n}\longrightarrow0
\]
almost surely on \(\mathcal S\).

\subsection{Localization and small-interval transfer}
\label{SS:preliminaries-localization}

For \(K,K_1<\infty\), define
\[
        \Omega_K=\{Z_\infty\leq K\},
        \qquad
        \mathcal Z_{K_1}
        =\left\{\sup_{\ell\geq0}Z_\ell\leq K_1\right\}.
\]
Since \(Z_n\to Z_\infty<\infty\) almost surely,
\[
        \mathbb P\left(
        \bigcup_{K=1}^\infty
        \bigcup_{K_1=1}^\infty
        \Omega_K\cap\mathcal Z_{K_1}
        \right)=1.
\]

For a finite Borel measure \(\nu\) on \([0,1]\), define
\[
        \Delta_\rho(\nu)
        =
        \sup\{\nu(I):I\subset[0,1]\text{ is an interval and }|I|\leq\rho\}.
\]

\begin{lemma}\label{L:preliminaries-small-interval-modulus}
If \(\nu\) is atomless, then
\[
        \Delta_\rho(\nu)\longrightarrow0
        \qquad(\rho\downarrow0).
\]
If \(\nu_m\Rightarrow\nu\) weakly, then for every fixed \(\rho>0\),
\[
        \limsup_{m\to\infty}\Delta_\rho(\nu_m)
        \leq
        \Delta_{2\rho}(\nu).
\]
\end{lemma}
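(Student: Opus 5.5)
For the first assertion I would argue by contradiction through compactness. Suppose \(\nu\) is atomless but there exist \(\varepsilon>0\), radii \(\rho_k\downarrow0\), and intervals \(I_k\subset[0,1]\) with \(|I_k|\leq\rho_k\) and \(\nu(I_k)\geq\varepsilon\). Passing to a subsequence, the left endpoints of \(I_k\) converge to some \(t_0\in[0,1]\). For any fixed \(\delta>0\), the closed interval \([t_0-\delta,t_0+\delta]\cap[0,1]\) then contains \(I_k\) for all large \(k\), so
\[
\nu\bigl([t_0-\delta,t_0+\delta]\bigr)\geq\varepsilon .
\]
Letting \(\delta\downarrow0\) and using continuity from above of the finite measure \(\nu\) gives \(\nu(\{t_0\})\geq\varepsilon\). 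This contradicts atomlessness. Since \(\Delta_\rho(\nu)\) is monotone in \(\rho\), the sequential statement gives \(\Delta_\rho(\nu)\to0\) as \(\rho\downarrow0\).

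For the second assertion, I fix \(\rho>0\) and choose \(m_k\to\infty\) and intervals \(I_k\) with \(|I_k|\leq\rho\) such that \(\nu_{m_k}(I_k)\) is within \(1/k\) of \(\Delta_\rho(\nu_{m_k})\), along a subsequence realizing the \(\limsup\). By compactness, the endpoints of \(I_k\) converge, say to \(a\leq b\) with \(b-a\leq\rho\). Take the closed interval
\[
F=[a-\rho/2,\;b+\rho/2]\cap[0,1],
\]
whose length is at most \(2\rho\). For large \(k\), \(I_k\subset F\). Because \(F\) is closed, the portmanteau theorem gives
\[
\limsup_k \nu_{m_k}(I_k)\leq\limsup_k\nu_{m_k}(F)\leq\nu(F)\leq\Delta_{2\rho}(\nu).
\]
This proves the second claim.

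The only delicate step is making sure the enlarged set has length at most \(2\rho\) while still being closed, so that portmanteau applies with no continuity-set hypothesis. The choice of \(F\) above handles both requirements at once. No atomlessness is needed for this part.
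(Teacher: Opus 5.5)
Your proof is correct and follows essentially the same route as the paper: a compactness argument producing a limit point whose every neighborhood has mass at least $\varepsilon$, contradicting atomlessness, and then near-extremal intervals with convergent endpoints enclosed in a fixed closed interval of length at most $2\rho$, followed by the portmanteau theorem for closed sets. The differences are cosmetic: you track left endpoints and invoke continuity from above where the paper uses centers, and you pad by $\rho/2$ on each side directly rather than by a general $\eta$ and then setting $\eta=\rho$.
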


\begin{proof}
Suppose first that \(\nu\) is atomless.  
If the first conclusion failed, there would be \(\varepsilon>0\) and intervals \(I_m\) with \(|I_m|\to0\) and \(\nu(I_m)\geq\varepsilon\).  
After passing to a subsequence, the centers of \(I_m\) converge to some \(x\in[0,1]\).
Every neighborhood of \(x\) would then have \(\nu\)-mass at least \(\varepsilon\), contradicting atomlessness.

Now assume \(\nu_m\Rightarrow\nu\). 
Choose intervals \(I_m\) such that
\[
        |I_m|\leq\rho,
        \qquad
        \nu_m(I_m)\geq\Delta_\rho(\nu_m)-m^{-1}.
\]
Along a subsequence realizing the limsup, 
the two endpoints converge after passing to a further subsequence.  
For every \(\eta>0\), all sufficiently large \(I_m\) are contained in a fixed closed interval \(J_\eta\) of length at most \(\rho+\eta\).  
The portmanteau theorem gives
\[
        \limsup_{m\to\infty}\nu_m(I_m)
        \leq
        \limsup_{m\to\infty}\nu_m(J_\eta)
        \leq
        \nu(J_\eta)
        \leq
        \Delta_{\rho+\eta}(\nu).
\]
Taking \(\eta=\rho\) proves the second assertion.
\end{proof}

In the fixed-arc argument this lemma is applied to
\(\nu_m=\mu_m\) and \(\nu=\mu\).  
Fact~\ref{F:preliminaries-standard-cascade-facts} then makes the right-hand side small on non-extinction.

\subsection{Almost-sure and notation conventions}
\label{SS:preliminaries-full-event}

Let \(\mathcal E_0\) be the probability-one event on which the interval and circle cascades converge, 
all descendant limits indexed by finite words exist simultaneously, 
the cylinder and cutting identities above hold, 
and all dyadic endpoints have zero mass for the corresponding limiting measures on non-extinction.

The later proofs use only countably many integer or rational parameters and countably many deterministic frequency grids.  
We henceforth take the relevant countable intersections with \(\mathcal E_0\) without further comment.  
All resulting almost-sure assertions are therefore understood on a common probability-one event.

The endpoint-safe cutoffs, derivative bands, phase-bin scales, 
and oscillatory coefficients are introduced at their first use in Section~\ref{S:arc-geometry}; 
they are not part of the basic cascade notation.

We reserve
\(\mu_n,\ \mu,\ Z_n,\ Z_\infty\)
for the interval cascade, and
\(
        \widetilde\mu_n^{\mathbb T},\
        \widetilde\mu^{\mathbb T},\
        \widetilde Z_n^{\mathbb T},\
        \widetilde Z_\infty^{\mathbb T}
\)
for the circle cascade.  
Their geometric pushforwards are denoted by
\(\mu_\gamma\) and \(\mu_\Gamma^{\mathbb T}\), respectively.

\section{The spine-gauge Rajchman engine}\label{S:spine-engine}

This section develops the probabilistic framework used throughout the paper in five steps. 
First, a size-biased spine argument gives an almost-sure lower-deviation mass theorem.  
Second, this lower-deviation theorem implies a shifted convolution estimate adapted to fixed-cutoff phase scales.  
Third, an adaptive terminal cutoff and a terminal descendant theorem allow us to pass from the limiting cascade measure to level approximations.  
Fourth, an abstract stopped skeleton theorem controls the martingale arrays that arise from oscillatory phase decompositions.  
Fifth, these ingredients give the interval Rajchman theorem.

Throughout this section, let
\[
        \mathcal{F}_n
        :=
        \sigma\bigl(W_u:1\leq |u|\leq n\bigr),
        \qquad n\geq0,
\]
where \(\mathcal{F}_0=\{\varnothing,\Omega\}\).  For
\(J=I_u\in\mathcal{D}_k([0,1])\), write
\(
A_J:=A_u,
\)
\(
V(J):=V(u).
\)
If \(k\geq1\), also write \(W_J:=W_u\) and
\(
 J^{-}
        :=
        I_{u|k-1}
        \in
        \mathcal{D}_{k-1}([0,1])
\)
for the dyadic parent of \(J\).  Then
\(
        A_J
        =
        \frac{1}{2}A_{J^{-}}W_J.
\)

By Fact~\ref{F:preliminaries-standard-cascade-facts} and the descendant-cascade facts from
Subsection~\ref{SS:preliminaries-descendants}, 
there is a probability-one event on which \(\mu_n\Rightarrow\mu\) and \(Z_n\to Z_\infty<\infty\), 
and, simultaneously for every finite word \(u\in\{0,1\}^{\ast}\),
\[
        Z_m^{(u)}
        \to
        Z_\infty^{(u)}
        \qquad
        (m\to\infty),
\]
and, up to the harmless dyadic endpoint convention, \(\mu(I_u)=A_uZ_\infty^{(u)}\).
Moreover, \(\mathbb{E}Z_\infty=1\). 
For each fixed integer \(r\geq0\), the family \(\bigl\{Z_\infty^{(u)}:|u|=r\bigr\}\) consists of independent copies of \(Z_\infty\) and is independent of \(\mathcal{F}_r\).
Finally, almost surely on \(\{Z_\infty>0\}\), the measure \(\mu\) is exact-dimensional of dimension \(\chi>0\) and is therefore atomless.

\subsection{The size-biased spine and lower-deviation mass}\label{SS:spine-lower-deviation}

Let \(W^\star\) have the \(W\)-size-biased law characterized by
\[
        \mathbb{E}^{\star}[f(W^\star)]
        =
        \mathbb{E}[Wf(W)]
\]
for every nonnegative Borel function \(f\). 
Since \(W\geq0\) and \(\mathbb{E}W=1\), this is a probability law, and
\(\mathbb{P}^{\star}(W^\star=0)=0\).
Hence \(X:=1-\log_2W^\star\) is well defined \(\mathbb{P}^{\star}\)-almost surely, with
\[
        \mathbb{E}^{\star}|X|
        =
        \mathbb{E}\!\left[
        W|1-\log_2W|\mathbf{1}_{\{W>0\}}
        \right],
\]
because
\(
  W|1-\log_2W|\mathbf{1}_{\{W>0\}}
        \leq
        W+W\log_2^+W+C.
\)
Hence
\(
        \mathbb{E}^{\star}X
        =
        1-\mathbb{E}[W\log_2W]
        =
        \chi.
\)

Let \(X_1,X_2,\ldots\) be independent copies of \(X\) under \(\mathbb{P}^{\star}\), 
and write \(S_k^\star:=\sum_{i=1}^kX_i\). 
For \(0<\theta<\chi\) and \(N\geq1\), define
\[
        \Psi_\theta(N)
        :=
        \mathbb{P}^{\star}
        \left(
        \exists k\in[N,2N):
        S_k^\star\leq\theta k
        \right).
\]

\begin{lemma}\label{L:spine-dyadic-complete-convergence}
For every \(0<\theta<\chi\),
\[
        \sum_{j=1}^{\infty}\Psi_\theta(2^j)<\infty.
\]
\end{lemma}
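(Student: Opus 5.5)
The plan is a truncation argument in the style of the Kolmogorov strong law along dyadic blocks. Only the first moment $\mathbb E^\star|X|<\infty$ is available, so a Hsu--Robbins type bound for individual $k$ is out of reach. Summing over dyadic blocks $[N,2N)$, $N=2^j$, instead of over individual $k$ should need exactly the first moment. Fix $0<\theta<\chi$ and set $\delta:=(\chi-\theta)/4>0$.

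The first step uses monotonicity of the event $\{S_k^\star\leq\theta k\}$: it only becomes more likely if the summands are decreased. Choose a fixed level $M<\infty$ with $\mathbb E^\star[\min\{X,M\}]\geq\chi-\delta$, which is possible by monotone convergence since $\mathbb E^\star|X|<\infty$. For the block $[N,2N)$ define the truncated variables
\[
Y_i:=\max\{\min\{X_i,M\},-N\},\qquad 1\le i<2N .
\]
On the event $\{X_i\geq -N\ \text{for all } i<2N\}$ we have $Y_i\leq X_i$ for all $i<2N$. Hence
\[
\Psi_\theta(N)\leq 2N\,\mathbb P^\star(X^->N)
+\mathbb P^\star\Bigl(\exists k\in[N,2N):\ \sum_{i\le k}Y_i\le\theta k\Bigr).
\]
The first term is summable over $N=2^j$, because $\sum_j 2^{j+1}\mathbb P^\star(X^->2^j)\leq C\,\mathbb E^\star X^-<\infty$.

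For the second term, note first that
\[
\mathbb E^\star Y_1\geq \mathbb E^\star[\min\{X,M\}]-\mathbb E^\star\bigl[X^-\mathbf 1_{\{X^->N\}}\bigr]\geq\chi-2\delta
\]
once $N$ is large. Then $\sum_{i\le k}Y_i\leq\theta k$ with $k\geq N$ forces
\[
\sum_{i\le k}(Y_i-\mathbb E^\star Y_i)\leq(\theta-\chi+2\delta)k\leq-2\delta N .
\]
Kolmogorov's maximal inequality therefore bounds the second term by
\[
\frac{2N\,\mathrm{Var}^\star(Y_1)}{(2\delta N)^2}
\leq\frac{C_\delta}{N}\Bigl(M^2+\mathbb E^\star\bigl[(X^-)^2\mathbf 1_{\{X^-\le N\}}\bigr]+N^2\,\mathbb P^\star(X^->N)\Bigr).
\]
Summing over $N=2^j$ gives three pieces. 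The $M^2$ piece gives a geometric series. The last piece reduces to the tail sum already handled in the first step. The middle piece is controlled by the standard interchange
\[
\sum_j2^{-j}\mathbb E^\star\bigl[(X^-)^2\mathbf 1_{\{X^-\le2^j\}}\bigr]
=\mathbb E^\star\Bigl[(X^-)^2\sum_{j:\,2^j\ge X^-}2^{-j}\Bigr]\leq 2\,\mathbb E^\star X^- .
\]
Together these prove $\sum_j\Psi_\theta(2^j)<\infty$.

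I expect the only delicate step to be the truncation bookkeeping. The truncation level for the negative tail must be tied to the block size $N$, not fixed, so that both the exceptional term $2N\,\mathbb P^\star(X^->N)$ and the variance term are summable from first-moment information alone. At the same time, the upper truncation at the fixed level $M$ is harmless precisely because only lower deviations are being estimated.
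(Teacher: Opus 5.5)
Your proof is correct and follows essentially the same route as the paper: truncation tied to the block size $N=2^j$, a union bound of the form $2N\,\mathbb{P}^{\star}(\,\cdot>N)$ for the exceptional event, Kolmogorov's maximal inequality, and Tonelli summation over $j$ that uses only a first moment. The only difference is minor. The paper truncates $D_i=\chi-X_i$ symmetrically at $|D_i|\le N$ and controls the mean shift $m_N\to0$. You instead truncate one-sidedly, capping at a fixed $M$ from above (justified by monotonicity, since only lower deviations matter) and clipping at $-N$ from below, which makes the lower bound on $\mathbb{E}^{\star}Y_1$ immediate and needs only $\mathbb{E}^{\star}X^-<\infty$ for the summations.
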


\begin{proof}
Fix \(0<\theta<\chi\), and set \(\delta:=\chi-\theta>0\) and \(D_i:=\chi-X_i\).  
Then \(D_1,D_2,\ldots\) are independent and identically distributed, 
belong to \(L^1(\mathbb{P}^{\star})\), and have mean zero.  
If \(N\) is a positive dyadic integer and \(k\in[N,2N)\), 
then \(S_k^\star\leq\theta k\) implies
\[
        \sum_{i=1}^{k}D_i
        =
        \chi k-S_k^\star
        \geq
        (\chi-\theta)k
        \geq
        \delta N.
\]
Consequently,
\[
        \Psi_\theta(N)
        \leq
        \mathbb{P}^{\star}
        \left(
        \max_{1\leq k\leq2N}
        \sum_{i=1}^{k}D_i
        \geq \delta N
        \right).
\]
For each positive dyadic integer \(N\), define
\[
        Y_i^{(N)}
        :=
        D_i\mathbf{1}_{\{|D_i|\leq N\}},
        \qquad
        m_N
        :=
        \mathbb{E}^{\star}Y_1^{(N)}.
\]
Since \(\mathbb{E}^{\star}D_1=0\), we have
\(
        m_N
        =
        -\mathbb{E}^{\star}\!\left[
        D_1\mathbf{1}_{\{|D_1|>N\}}
        \right].
\)
Moreover, since \(D_1\in L^1(\mathbb{P}^{\star})\), dominated convergence yields
\[
        |m_N|
        \leq
        \mathbb{E}^{\star}\!\left[
        |D_1|\mathbf{1}_{\{|D_1|>N\}}
        \right]
        \longrightarrow0.
\]
Hence, for all sufficiently large dyadic \(N\),
\(
2N|m_N|
\leq
\frac{\delta N}{4}.
\)
Write
\(
\widetilde{Y}_i^{(N)}:=Y_i^{(N)}-m_N
\).
For fixed \(N\), the variables \(\widetilde{Y}_i^{(N)}\) are independent, centered, and square-integrable.  
Suppose that
\(
        \max_{1\leq k\leq2N}
        \sum_{i=1}^{k}D_i
        \geq \delta N
\)
and that \(|D_i|\leq N\) for every \(1\leq i\leq2N\).  
Then, for some \(k\leq2N\),
\[
        \sum_{i=1}^{k}\widetilde{Y}_i^{(N)}
        =
        \sum_{i=1}^{k}D_i-km_N
        \geq
        \delta N-2N|m_N|
        \geq
        \frac{3\delta N}{4}.
\]
It follows that, for all sufficiently large dyadic \(N\),
\[
        \Psi_\theta(N)
        \leq
        2N\mathbb{P}^{\star}(|D_1|>N)
        +
        \mathbb{P}^{\star}
        \left(
        \max_{1\leq k\leq2N}
        \sum_{i=1}^{k}\widetilde{Y}_i^{(N)}
        \geq
        \frac{3\delta N}{4}
        \right).
\]
By Kolmogorov's maximal inequality,
\[
\begin{aligned}
        \mathbb{P}^{\star}
        \left(
        \max_{1\leq k\leq2N}
        \sum_{i=1}^{k}\widetilde{Y}_i^{(N)}
        \geq
        \frac{3\delta N}{4}
        \right)
        &\leq
        \mathbb{P}^{\star}
        \left(
        \max_{1\leq k\leq2N}
        \left|
        \sum_{i=1}^{k}\widetilde{Y}_i^{(N)}
        \right|
        \geq
        \frac{3\delta N}{4}
        \right)
\\
        &\leq
        \frac{
        2N\operatorname{Var}^{\star}(Y_1^{(N)})
        }{
        (3\delta N/4)^2
        }
\\
        &\leq
        \frac{C_\delta}{N}
        \mathbb{E}^{\star}
        \left[
        D_1^2\mathbf{1}_{\{|D_1|\leq N\}}
        \right].
\end{aligned}
\]
Therefore, for all sufficiently large \(j\),
\[
        \Psi_\theta(2^j)
        \leq
        2^{j+1}\mathbb{P}^{\star}(|D_1|>2^j)
        +
        C_\delta2^{-j}
        \mathbb{E}^{\star}
        \left[
        D_1^2\mathbf{1}_{\{|D_1|\leq2^j\}}
        \right].
\]
By Tonelli's theorem,
\[
\begin{aligned}
        \sum_{j=1}^{\infty}
        2^j\mathbb{P}^{\star}(|D_1|>2^j)
        =
        \mathbb{E}^{\star}
        \left[
        \sum_{\substack{j\geq1\\2^j<|D_1|}}
        2^j
        \right]\leq
        2\mathbb{E}^{\star}|D_1|
        <\infty,
\end{aligned}
\]
where we used \(\sum_{\substack{j\geq1\\2^j<x}}2^j\leq2x\) for \(x\geq0\).  
Similarly,
\[
\begin{aligned}
        \sum_{j=1}^{\infty}
        2^{-j}
        \mathbb{E}^{\star}
        \left[
        D_1^2\mathbf{1}_{\{|D_1|\leq2^j\}}
        \right]=
        \mathbb{E}^{\star}
        \left[
        D_1^2
        \sum_{\substack{j\geq1\\2^j\geq|D_1|}}
        2^{-j}
        \right]\leq
        2\mathbb{E}^{\star}|D_1|
        <\infty,
\end{aligned}
\]
where we used
\(
        x^2
        \sum_{\substack{j\geq1\\2^j\geq x}}
        2^{-j}
        \leq
        2x
\)
for \(x\geq0\). 
Combining these estimates, and observing that the finitely many initial terms are bounded by \(1\), 
yields
\(
\sum_{j=1}^{\infty}\Psi_\theta(2^j)<\infty.
\)
\end{proof}

\begin{lemma}\label{L:spine-many-to-one}
For every integer \(n\geq1\) and every nonnegative Borel function
\(F:\mathbb{R}^n\to[0,\infty]\), one has
\[
        \mathbb{E}
        \left[
        \sum_{\substack{|u|=n\\A_u>0}}
        A_u
        F\bigl(V(u|1),\ldots,V(u|n)\bigr)
        \right]
        =
        \mathbb{E}^{\star}
        \left[
        F(S_1^\star,\ldots,S_n^\star)
        \right].
\]
Equivalently, one may sum over all \(|u|=n\), 
with the convention that the entire summand is \(0\) on \(\{A_u=0\}\).
\end{lemma}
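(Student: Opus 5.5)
The plan is to prove the many-to-one identity by expanding the sum over words and using independence along each ray of the tree. Fix \(n\geq1\) and a word \(u\in\{0,1\}^n\). On \(\{A_u>0\}\) every weight \(W_{u|k}\) is positive, and
\[
V(u|k)=-\log_2 A_{u|k}=\sum_{i=1}^k\bigl(1-\log_2W_{u|i}\bigr),\qquad 1\leq k\leq n.
\]
So the vector \((V(u|1),\ldots,V(u|n))\) is the vector of partial sums of the increments \(\xi_i:=1-\log_2W_{u|i}\), \(i=1,\dots,n\). These increments are functions of the independent copies \(W_{u|1},\ldots,W_{u|n}\) of \(W\). Also, \(A_u=2^{-n}\prod_{i=1}^nW_{u|i}\).

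The key step is to compute, for a single fixed \(u\),
\[
\mathbb E\Bigl[A_u\mathbf 1_{\{A_u>0\}}F(V(u|1),\ldots,V(u|n))\Bigr]
=2^{-n}\,\mathbb E\Bigl[\prod_{i=1}^n W_{u|i}\mathbf 1_{\{W_{u|i}>0\}}\;G(W_{u|1},\ldots,W_{u|n})\Bigr],
\]
where \(G(w_1,\dots,w_n)=F\bigl(s_1,\ldots,s_n\bigr)\) with \(s_k=\sum_{i\le k}(1-\log_2 w_i)\). The joint law of \((W_{u|1},\ldots,W_{u|n})\) is the product law \(\mathbb P_W^{\otimes n}\). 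Applying Fubini/Tonelli coordinatewise, together with the defining identity \(\mathbb E^\star[f(W^\star)]=\mathbb E[Wf(W)]\) (extended from functions of one variable to products via the product structure), turns the weighted product measure \(\prod_i w_i\,\mathbb P_W(dw_i)\) into the law of \(n\) independent size-biased copies \(W^\star_1,\dots,W^\star_n\). The indicator \(\mathbf 1_{\{w_i>0\}}\) is harmless because \(\mathbb P^\star(W^\star=0)=0\). This gives
\[
\mathbb E\bigl[A_u\mathbf 1_{\{A_u>0\}}F(\cdots)\bigr]=2^{-n}\,\mathbb E^\star[F(S^\star_1,\ldots,S^\star_n)],
\]
since \(X_i=1-\log_2W_i^\star\) are i.i.d. with the law of \(X\).

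Summing over the \(2^n\) words \(u\) with \(|u|=n\) via Tonelli (all terms are nonnegative, and \(F\) may take the value \(+\infty\)) yields the identity. The equivalent formulation with the convention that the summand vanishes on \(\{A_u=0\}\) is immediate.

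The only point needing care, rather than a real obstacle, is the rigorous change of measure for functions of \(n\) variables. I would handle it by first verifying the identity for product functions \(F\) of indicator type via independence and the one-variable identity, then extending to all nonnegative Borel functions by a monotone class argument, or directly by Tonelli on the product space. Measurability of \(V\) with values in \((-\infty,\infty]\), and the fact that \(F\) only needs to be evaluated on the event \(\{A_u>0\}\) where all values are finite, should also be noted.
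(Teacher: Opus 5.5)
Your proposal is correct and follows essentially the same route as the paper. In both, each word's expectation is computed using independence along the branch and the iterated size-biased identity, which turns \(2^{-n}\prod_i w_i\,\mathbb{P}_W(\mathrm{d}w_i)\) into the law of \(n\) i.i.d.\ copies of \(W^\star\), and the \(2^n\) words are then summed. The only cosmetic difference is that the paper first treats bounded \(F\) and passes to general \(F\) via \(F\wedge M\) and monotone convergence, whereas you apply Tonelli directly to nonnegative \(F\).
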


\begin{proof}
Suppose first that \(F\) is bounded. On \(\{A_u>0\}\),
\(
V(u|j)=\sum_{m=1}^{j}(1-\log_2W_{u|m})
\)
for \(1\leq j\leq n\).  
Hence, by the definition of \(A_u\), independence of the weights along a fixed branch, and iteration of the size-biased identity,
\[
\begin{aligned}
        &\mathbb{E}
        \left[
        \sum_{\substack{|u|=n\\A_u>0}}
        A_u
        F\bigl(V(u|1),\ldots,V(u|n)\bigr)
        \right]
\\
        &\qquad=
        2^{-n}
        \sum_{|u|=n}
        \mathbb{E}
        \left[
        \prod_{m=1}^{n}W_{u|m}
        F
        \left(
        1-\log_2W_{u|1},
        \ldots,
        \sum_{m=1}^{n}
        \bigl(1-\log_2W_{u|m}\bigr)
        \right)
        \,;\,
        A_u>0
        \right]
\\
        &\qquad=
        2^{-n}
        \sum_{|u|=n}
        \mathbb{E}^{\star}
        \left[
        F(S_1^\star,\ldots,S_n^\star)
        \right]
\\
        &\qquad=
        \mathbb{E}^{\star}
        \left[
        F(S_1^\star,\ldots,S_n^\star)
        \right],
\end{aligned}
\]
where the last equality uses the fact that there are \(2^n\) words of length \(n\).

For a general nonnegative Borel function \(F\), apply the bounded case to \(F_M:=F\wedge M\) and let \(M\to\infty\). 
Monotone convergence on both sides completes the proof.
\end{proof}

\begin{lemma}\label{L:spine-hereditary-amplification}
Fix \(0<\theta<\chi\), and let \((R_j)_{j\geq1}\) be a deterministic sequence of positive numbers such that
\[
        \sum_{j=1}^{\infty}
        \frac{\Psi_\theta(2^j)}{R_j}
        <\infty.
\]
Then, almost surely, for all sufficiently large \(j\) and every integer \(n\) with \(2^j\leq n<2^{j+1}\), 
one has
\[
        L_n(\theta)\leq R_j.
\]
\end{lemma}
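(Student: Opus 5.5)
The plan is to dominate every $L_n(\theta)$ with $2^j\le n<2^{j+1}$ by a single nonnegative martingale indexed by $n$. Its terminal value will have expectation exactly $\Psi_\theta(2^j)$ by the many-to-one formula. Doob's maximal inequality and Borel--Cantelli then give the claim.

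\emph{Step 1 (block mass).} Fix $j\ge1$ and set $N_j:=2^{j+1}$. Define the hereditary block mass
\[
M_j:=\sum_{|u|=N_j}A_u\,\mathbf 1\bigl\{\exists k\in[2^j,2^{j+1}):\ V(u|k)\le\theta k\bigr\},
\]
with the summand equal to $0$ when $A_u=0$. Apply Lemma~\ref{L:spine-many-to-one} with $n=N_j$ and $F(s_1,\dots,s_{N_j})=\mathbf 1\{\exists k\in[2^j,2^{j+1}): s_k\le\theta k\}$. This gives
\[
\mathbb E M_j=\mathbb P^\star\bigl(\exists k\in[2^j,2^{j+1}):S_k^\star\le\theta k\bigr)=\Psi_\theta(2^j).
\]

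\emph{Step 2 (domination by a martingale).} For $2^j\le n\le N_j$, set $\mathcal M_n^{(j)}:=\mathbb E[M_j\mid\mathcal F_n]$. This is a nonnegative martingale in $n$. Fix $n\in[2^j,2^{j+1})$ and a word $|w|=n$ with $V(w)\le\theta n$. Every descendant $u$ of $w$ with $|u|=N_j$ satisfies the defining event of $M_j$, witnessed by $k=n$. Since this event is then $\mathcal F_n$-measurable on those terms, and since $\mathbb E\bigl[\sum_{u\succeq w,\,|u|=N_j}A_u\mid\mathcal F_n\bigr]=A_w\,\mathbb E W^{N_j-n}=A_w$ (the descendant weights are independent of $\mathcal F_n$ and have mean one), we get
\[
L_n(\theta)=\sum_{|w|=n}A_w\mathbf 1\{V(w)\le\theta n\}\le \mathcal M_n^{(j)}.
\]
Here we drop the nonnegative contributions of words $w$ with $V(w)>\theta n$.

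\emph{Step 3 (maximal inequality and Borel--Cantelli).} Doob's maximal inequality for the nonnegative martingale $(\mathcal M^{(j)}_n)_{2^j\le n\le N_j}$ gives
\[
\mathbb P\Bigl(\max_{2^j\le n<2^{j+1}}L_n(\theta)>R_j\Bigr)\le\mathbb P\Bigl(\max_{2^j\le n\le N_j}\mathcal M_n^{(j)}>R_j\Bigr)\le\frac{\mathbb E M_j}{R_j}=\frac{\Psi_\theta(2^j)}{R_j}.
\]
By hypothesis these bounds are summable in $j$. Borel--Cantelli therefore shows that almost surely only finitely many of these events occur, which is the assertion.

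The only delicate point is Step 2. The indicator $\mathbf 1\{V(w)\le\theta n\}$ must be transferred hereditarily to all depth-$N_j$ descendants, so that one terminal variable $M_j$ works simultaneously for every $n$ in the block. This is why $M_j$ is defined with an existential quantifier over $k$ rather than for each fixed $n$. The remaining check is that the conditional expectation of descendant mass is exactly $A_w$, which uses $\mathbb EW=1$ and independence.
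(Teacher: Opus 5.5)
Your proof is correct and is essentially the paper's argument. Your $M_j$ is exactly the paper's terminal block mass $G_{2^j,2^{j+1}}(\theta)$. The paper shows that the hereditary process $m\mapsto G_{2^j,m}(\theta)$ is a nonnegative submartingale and applies Doob's inequality to it, whereas you apply Doob's inequality to the closing martingale $\mathbb{E}[M_j\mid\mathcal{F}_n]$, which dominates that submartingale; this is a cosmetic difference.

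One small notational fix in Step 2: the conditional descendant mass is $A_w\,2^{-(N_j-n)}\cdot 2^{N_j-n}(\mathbb{E}W)^{N_j-n}=A_w$. As written, $A_w\,\mathbb{E}W^{N_j-n}$ reads as a high moment of $W$, which may be infinite in this setting.
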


\begin{proof}
For each \(j\geq1\), put \(N:=2^j\).  For \(0\leq m\leq2N\) and \(u\in\{0,1\}^m\), define
\[
        B_{N,m}(u)
        :=
        \mathbf{1}_{\{
        \exists k\in[N,2N),\,
        k\leq m,\,
        V(u|k)\leq\theta k
        \}},
\]
and set
\[
        G_{N,m}(\theta)
        :=
        \sum_{|u|=m}A_uB_{N,m}(u).
\]
The variable \(G_{N,m}(\theta)\) is \(\mathcal{F}_m\)-measurable and satisfies \(0\leq G_{N,m}(\theta)\leq Z_m\), 
so it is integrable.

The indicators are hereditary along descendants: 
for \(0\leq m<2N\), \(u\in\{0,1\}^m\), and \(a\in\{0,1\}\),
\(
B_{N,m+1}(ua)\geq B_{N,m}(u)
\).
Using \(A_{ua}=\frac12A_uW_{ua}\), the independence of the generation-\((m+1)\) weights from \(\mathcal{F}_m\), 
and \(\mathbb{E}W=1\), we obtain
\[
\begin{aligned}
        \mathbb{E}
        \left[
        G_{N,m+1}(\theta)
        \mid\mathcal{F}_m
        \right]
        &=
        \sum_{|u|=m}
        \sum_{a\in\{0,1\}}
        \mathbb{E}
        \left[
        A_{ua}B_{N,m+1}(ua)
        \mid\mathcal{F}_m
        \right]
\\
        &\geq
        \sum_{|u|=m}
        B_{N,m}(u)
        \sum_{a\in\{0,1\}}
        \mathbb{E}
        \left[
        A_{ua}
        \mid\mathcal{F}_m
        \right]
\\
        &=
        \sum_{|u|=m}
        A_uB_{N,m}(u)
\\
        &=
        G_{N,m}(\theta).
\end{aligned}
\]
Thus
\(
(G_{N,m}(\theta),\mathcal{F}_m)_{0\leq m\leq2N}
\)
is a nonnegative submartingale.

Define the bounded Borel function
\[
        F_N(x_1,\ldots,x_{2N})
        :=
        \mathbf{1}_{\{
        \exists k\in[N,2N):
        x_k\leq\theta k
        \}}.
\]
The branches with \(A_u=0\) make no contribution.  
Applying Lemma~\ref{L:spine-many-to-one} gives
\[
\begin{aligned}
        \mathbb{E}G_{N,2N}(\theta)
        &=
        \mathbb{E}
        \left[
        \sum_{|u|=2N}
        A_u
        \mathbf{1}_{\{
        \exists k\in[N,2N):
        V(u|k)\leq\theta k
        \}}
        \right]
\\
        &=
        \mathbb{E}^{\star}
        \left[
        F_N(S_1^\star,\ldots,S_{2N}^\star)
        \right]
\\
        &=
        \mathbb{P}^{\star}
        \left(
        \exists k\in[N,2N):
        S_k^\star\leq\theta k
        \right)
\\
        &=
        \Psi_\theta(N).
\end{aligned}
\]
Doob's maximal inequality therefore yields
\[
\begin{aligned}
        \mathbb{P}
        \left(
        \max_{0\leq m\leq2N}
        G_{N,m}(\theta)>R_j
        \right)
        \leq
        \frac{\mathbb{E}G_{N,2N}(\theta)}{R_j}=
        \frac{\Psi_\theta(2^j)}{R_j}.
\end{aligned}
\]
By the assumed summability and the Borel--Cantelli lemma, almost surely, 
for all sufficiently large \(j\),
\[
        \max_{0\leq m\leq2^{j+1}}
        G_{2^j,m}(\theta)
        \leq
        R_j.
\]

Now let \(2^j\leq n<2^{j+1}\).  
If \(|u|=n\) and \(V(u)\leq\theta n\), then \(B_{2^j,n}(u)=1\), since \(k=n\) is admissible. 
Hence, for all sufficiently large \(j\),
\[
\begin{aligned}
        L_n(\theta)
        =
        \sum_{|u|=n}
        A_u\mathbf{1}_{\{V(u)\leq\theta n\}}\leq
        G_{2^j,n}(\theta)\leq
        R_j.
\end{aligned}
\]
This proves the lemma.
\end{proof}

\begin{theorem}[Spine lower-deviation mass]
\label{T:spine-lower-deviation}
For every \(0<\theta<\chi\),
\[
        L_n(\theta)
        =
        \sum_{|u|=n}
        A_u\mathbf{1}_{\{V(u)\leq\theta n\}}
        \longrightarrow0
        \qquad\text{almost surely}.
\]
Moreover, the convergence holds simultaneously for all \(0<\theta<\chi\) on a single probability-one event.
\end{theorem}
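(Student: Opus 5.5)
The plan is to combine Lemma~\ref{L:spine-dyadic-complete-convergence} with Lemma~\ref{L:spine-hereditary-amplification}. The only task is to choose a deterministic sequence \(R_j\downarrow0\) with \(\sum_j\Psi_\theta(2^j)/R_j<\infty\).

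Fix \(0<\theta<\chi\) and set \(a_j:=\Psi_\theta(2^j)\). By Lemma~\ref{L:spine-dyadic-complete-convergence}, \(\sum_ja_j<\infty\). A standard construction produces a summable series dominated by a slower-decaying weight. Write \(r_j:=\sum_{i\geq j}a_i\), so that \(r_j\downarrow0\). If \(r_j=0\) for some \(j\), then \(a_i=0\) for all large \(i\); in that case \(R_j:=1/j\) works trivially. Otherwise set \(R_j:=\sqrt{r_j}\). Then
\[
\frac{a_j}{R_j}=\frac{r_j-r_{j+1}}{\sqrt{r_j}}\leq 2\bigl(\sqrt{r_j}-\sqrt{r_{j+1}}\bigr),
\]
and this telescopes to a finite sum. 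So \(\sum_ja_j/R_j<\infty\) while \(R_j\to0\).

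Lemma~\ref{L:spine-hereditary-amplification} then gives, almost surely, \(L_n(\theta)\leq R_j\) for all \(n\in[2^j,2^{j+1})\) with \(j\) large. Since \(R_j\to0\), this yields \(L_n(\theta)\to0\) almost surely for each fixed \(\theta\).

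For simultaneity in \(\theta\), use monotonicity. For fixed \(n\), the map \(\theta\mapsto L_n(\theta)\) is nondecreasing, because the indicator \(\mathbf 1_{\{V(u)\leq\theta n\}}\) increases with \(\theta\). Intersect the probability-one events over a countable set of \(\theta\) that accumulates at \(\chi\) from below, for instance rational \(\theta\in(0,\chi)\) or \(\theta_k=\chi(1-1/k)\). Given any \(0<\theta<\chi\), choose \(\theta'\) in this countable set with \(\theta\leq\theta'<\chi\). Then \(0\leq L_n(\theta)\leq L_n(\theta')\to0\).

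There is no real obstacle here. The only point requiring care is the choice of \(R_j\), namely the square root of the tail sums, which converts plain summability into summability with a vanishing gauge. The zero-tail degenerate case is handled separately as above.
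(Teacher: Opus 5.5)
Your proof is correct and follows the paper's argument essentially step by step. You use the square-root-of-tail-sums gauge with the telescoping bound, feed it into Lemma~\ref{L:spine-hereditary-amplification}, and obtain simultaneity by monotonicity in \(\theta\) over a countable set. The only cosmetic difference is that the paper takes \(R_j=\sqrt{q_j}+2^{-j}\), which keeps \(R_j>0\) automatically, instead of treating the vanishing-tail case separately with \(R_j=1/j\).
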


\begin{proof}
Fix \(0<\theta<\chi\).  
By Lemma~\ref{L:spine-dyadic-complete-convergence},
\(
\sum_{j=1}^{\infty}\Psi_\theta(2^j)<\infty
\).
Define
\[
        q_j
        :=
        \sum_{\ell=j}^{\infty}\Psi_\theta(2^\ell),
        \qquad
        R_j
        :=
        \sqrt{q_j}+2^{-j}.
\]
Since \(q_j\downarrow0\), we have \(R_j>0\) and \(R_j\to0\).
Moreover,
\(
        \sum_{j=1}^{\infty}
        \frac{\Psi_\theta(2^j)}{R_j}
        <\infty.
\)
Indeed, \(q_j-q_{j+1}=\Psi_\theta(2^j)\).  If \(q_j>0\), then
\[
\begin{aligned}
        \frac{\Psi_\theta(2^j)}{R_j}
        \leq
        \frac{q_j-q_{j+1}}{\sqrt{q_j}}
\leq
        2\bigl(\sqrt{q_j}-\sqrt{q_{j+1}}\bigr).
\end{aligned}
\]
If \(q_j=0\), then \(\Psi_\theta(2^j)=0\), so the corresponding summand vanishes.  
Therefore
\[
\begin{aligned}
        \sum_{j=1}^{\infty}
        \frac{\Psi_\theta(2^j)}{R_j}
        \leq
        2
        \sum_{j=1}^{\infty}
        \bigl(\sqrt{q_j}-\sqrt{q_{j+1}}\bigr)=
        2\sqrt{q_1}
        <\infty.
\end{aligned}
\]

Lemma~\ref{L:spine-hereditary-amplification} implies that, almost surely, 
for all sufficiently large \(j\) and every integer \(n\) with \(2^j\leq n<2^{j+1}\),
\(
L_n(\theta)\leq R_j
\).
Since \(R_j\to0\), it follows that \(L_n(\theta)\to0\).

Finally, intersect the corresponding probability-one events over all rational \(q\in(0,\chi)\).  
On the resulting probability-one event, 
let \(0<\theta<\chi\) be arbitrary and choose a rational \(q\in(\theta,\chi)\).  
Since
\(
L_n(\theta)\leq L_n(q)
\)
for every \(n\), and \(L_n(q)\to0\), we obtain \(L_n(\theta)\to0\).  
Thus the convergence holds simultaneously for all \(0<\theta<\chi\).
\end{proof}

\begin{lemma}[Shifted convolution]
\label{L:spine-shifted-convolution}
Let \((a_k)_{k\geq1}\) be a nonnegative sequence such that \(a_k\to0\) as \(k\to\infty\).  
Then, for every fixed \(h\in\mathbb{N}_0\) and \(C_0\geq0\),
\[
        \sup_{\substack{m\in\mathbb{Z}\\|m-n|\leq C_0}}
        \sum_{k=1}^{n+h}
        \min\{1,2^{k-m}\}a_k
        \longrightarrow0
        \qquad(n\to\infty).
\]
Consequently, on the probability-one event from Theorem~\ref{T:spine-lower-deviation}, 
for every \(0<\theta<\chi\),
\[
        \sup_{\substack{m\in\mathbb{Z}\\|m-n|\leq C_0}}
        \sum_{k=1}^{n+h}
        \min\{1,2^{k-m}\}L_k(\theta)
        \longrightarrow0
        \qquad(n\to\infty).
\]
\end{lemma}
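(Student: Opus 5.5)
The deterministic statement is a discrete Toeplitz-type limit, and I will prove it by splitting the sum into a far range and a near range of \(k\). Given \(\varepsilon>0\), choose \(K\) such that \(a_k\leq\varepsilon\) for all \(k>K\), and put \(A:=\sup_k a_k<\infty\); this is finite because \(a_k\to0\).

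Fix \(n\) and any \(m\in\mathbb Z\) with \(|m-n|\leq C_0\). I split the sum at \(k=K\).

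\textbf{Head, \(k\leq K\).} Here \(\min\{1,2^{k-m}\}\leq 2^{k-m}\). Hence
\[
\sum_{k=1}^{K}\min\{1,2^{k-m}\}a_k\leq A\,2^{K+1-m}\leq A\,2^{K+1+C_0-n},
\]
which tends to \(0\) as \(n\to\infty\), uniformly in the admissible \(m\).

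\textbf{Tail, \(K<k\leq n+h\).} Here \(a_k\leq\varepsilon\), so the contribution is at most \(\varepsilon\sum_{k\leq n+h}\min\{1,2^{k-m}\}\). I estimate this geometric-type sum separately on \(k\leq m\) and on \(m<k\leq n+h\):
\begin{itemize}
\item on \(k\leq m\), the weights are \(2^{k-m}\), so the sum is at most \(\sum_{k\leq m}2^{k-m}\leq 2\);
\item on \(m<k\leq n+h\), each weight is \(1\), and there are at most \(n+h-m\leq C_0+h\) such \(k\).
\end{itemize}
The tail is therefore at most \(\varepsilon(2+C_0+h)\).

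Combining the two ranges gives \(\limsup_{n\to\infty}\sup_{|m-n|\leq C_0}(\cdots)\leq\varepsilon(2+C_0+h)\). Since \(\varepsilon\) is arbitrary, the limit is \(0\). The only point needing care is that both \(h\) and \(C_0\) are fixed: this keeps the number of weight-one terms bounded. That bounded count is exactly why the bounded shift is essential, and I do not expect any real obstacle.

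For the consequence, I work on the probability-one event of Theorem~\ref{T:spine-lower-deviation}. On that event \(L_k(\theta)\to0\) for every \(0<\theta<\chi\) simultaneously. For each fixed \(\theta\), the sequence \(a_k:=L_k(\theta)\) is nonnegative and tends to \(0\), so the deterministic statement applies pathwise. The constants \(h\) and \(C_0\) are deterministic, so no further exceptional null set is introduced.
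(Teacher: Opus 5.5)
Your proof is correct and takes essentially the same approach as the paper. You split at a threshold beyond which \(a_k\) is small, bound the head by a fixed constant times \(2^{C_0-n}\), and bound the tail by \((2+h+C_0)\varepsilon\) using the geometric sum for \(k\leq m\) and the at most \(h+C_0\) unit weights for \(k>m\); you then apply the result pathwise to \(a_k=L_k(\theta)\), with the only cosmetic difference being that you bound the head using \(\sup_k a_k\) rather than \(\sum_{k<K_0}2^k a_k\).
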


\begin{proof}
Fix \(\eta>0\).  Since \(a_k\to0\), there exists \(K_0\geq1\) such that \(a_k\leq\eta\) for every \(k\geq K_0\).  
For all sufficiently large \(n\), we have \(n+h\geq K_0\) and \(n-C_0\geq1\).  
For such \(n\),
\[
\begin{aligned}
        \sum_{k=1}^{n+h}
        \min\{1,2^{k-m}\}a_k
        =
        \sum_{k=1}^{K_0-1}
        \min\{1,2^{k-m}\}a_k
+
        \sum_{k=K_0}^{n+h}
        \min\{1,2^{k-m}\}a_k.
\end{aligned}
\]

For the finite initial sum,
\[
\begin{aligned}
        \sup_{\substack{m\in\mathbb{Z}\\|m-n|\leq C_0}}
        \sum_{k=1}^{K_0-1}
        \min\{1,2^{k-m}\}a_k\leq
        2^{C_0-n}
        \sum_{k=1}^{K_0-1}2^ka_k
        \longrightarrow0.
\end{aligned}
\]

For the tail, if \(m\in\mathbb{Z}\) and \(|m-n|\leq C_0\), then
\[
\begin{aligned}
        \sum_{k=1}^{n+h}\min\{1,2^{k-m}\}
        \leq
        \sum_{k=1}^{m}2^{k-m}
        +
        \max\{n+h-m,0\}\leq
        2+h+C_0.
\end{aligned}
\]
Therefore,
\[
\begin{aligned}
        \sup_{\substack{m\in\mathbb{Z}\\|m-n|\leq C_0}}
        \sum_{k=K_0}^{n+h}
        \min\{1,2^{k-m}\}a_k\leq
        (2+h+C_0)\eta.
\end{aligned}
\]
Combining the initial and tail estimates gives
\[
        \limsup_{n\to\infty}
        \sup_{\substack{m\in\mathbb{Z}\\|m-n|\leq C_0}}
        \sum_{k=1}^{n+h}
        \min\{1,2^{k-m}\}a_k
        \leq
        (2+h+C_0)\eta.
\]
Letting \(\eta\downarrow0\) proves the deterministic statement.

On the probability-one event from Theorem~\ref{T:spine-lower-deviation}, 
apply the deterministic statement pathwise to \(a_k:=L_k(\theta)\), 
which is nonnegative and converges to zero for every \(0<\theta<\chi\).
\end{proof}

\subsection{Adaptive terminal cutoff}\label{SS:spine-terminal-cutoff}

For \(b>0\), \(h\in\mathbb{N}_0\), and \(n\geq1\), define
\[
        T_{n,h}^{>}(b)
        :=
        \sum_{|u|=n+h}
        \mu(I_u)
        \mathbf{1}_{\{\mu(I_u)>b/n\}}.
\]
Since \(\mathcal{D}_{n+h}([0,1])\) is a partition of \([0,1]\),
equivalently,
\[
        T_{n,h}^{>}(b)
        =
        \int_{[0,1]}
        \mathbf{1}_{\{\mu(I_{n+h}(t))>b/n\}}
        \,\mathrm{d}\mu(t).
\]

\begin{proposition}\label{P:terminal-positive-tail}
For every \(b>0\) and \(h\in\mathbb{N}_0\),
\[
        T_{n,h}^{>}(b)
        \longrightarrow0
        \qquad(n\to\infty)
\]
almost surely on \(\{Z_\infty>0\}\).
\end{proposition}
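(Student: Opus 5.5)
The plan is to use the integral representation
\[
        T_{n,h}^{>}(b)
        =
        \int_{[0,1]}
        \mathbf{1}_{\{\mu(I_{n+h}(t))>b/n\}}
        \,\mathrm{d}\mu(t)
\]
and apply dominated convergence pathwise, with exact dimensionality from Fact~\ref{F:preliminaries-standard-cascade-facts}(4) as the only input. No spine or moment estimate should be needed, since the threshold $b/n$ decays only polynomially while typical cylinder masses decay exponentially.

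\textbf{Step 1 (the probability-one event).} Fix $b>0$ and $h\in\mathbb{N}_0$. Work on the probability-one event $\mathcal E_0$, intersected with the event on which Fact~\ref{F:preliminaries-standard-cascade-facts}(4) holds. On this event, whenever $Z_\infty>0$, there is a Borel set $G\subset[0,1]$ with $\mu([0,1]\setminus G)=0$ such that for every $t\in G$,
\[
-\tfrac1k\log_2\mu(I_k(t))\to\chi \qquad (k\to\infty).
\]
The endpoint convention for $I_k(t)$ is harmless here: every $t$ has a unique $I_k(t)$, and the limit statement refers to exactly these cylinders.

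\textbf{Step 2 (pointwise vanishing of the integrand).} Fix $t\in G$ and $0<\varepsilon<\chi$. For all sufficiently large $n$ we have
\[
\mu(I_{n+h}(t))\le 2^{-(\chi-\varepsilon)(n+h)}.
\]
Since $2^{-(\chi-\varepsilon)(n+h)}\,n/b\to0$, it follows that $\mu(I_{n+h}(t))\le b/n$ for all large $n$. Hence the indicator $\mathbf{1}_{\{\mu(I_{n+h}(t))>b/n\}}$ is eventually $0$ for every $t\in G$, that is, for $\mu$-almost every $t$.

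\textbf{Step 3 (dominated convergence).} The integrands are bounded by $1$, and $\mu([0,1])=Z_\infty<\infty$. Dominated convergence with respect to the finite measure $\mu$, applied pathwise, gives $T_{n,h}^{>}(b)\to0$. Measurability in $t$ of the integrand is clear, because $t\mapsto\mu(I_{n+h}(t))$ is constant on each of the finitely many intervals of $\mathcal D_{n+h}([0,1])$.

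\textbf{Main obstacle.} The only delicate point is to make sure exact dimensionality, rather than mere atomlessness, is what gets used. Lemma~\ref{L:preliminaries-small-interval-modulus} gives $\sup_t\mu(I_{n+h}(t))\to0$, but with no rate, so it cannot beat the moving threshold $b/n$. The pointwise exponential rate $\chi>0$ is exactly what handles this. It is used here only $\mu$-almost everywhere, which suffices because the quantity being controlled is a $\mu$-integral. Since the exceptional null sets depend only on the countably many parameters $(b,h)$ with rational $b$, and $T_{n,h}^{>}(b)$ is monotone in $b$, the conclusion also holds for all $b$ and $h$ on a single probability-one event.
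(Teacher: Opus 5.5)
Your proposal is correct and follows essentially the same approach as the paper. Both use the integral representation of \(T_{n,h}^{>}(b)\), exact dimensionality to get \(\mu(I_{n+h}(t))\le 2^{-c(n+h)}\) eventually for \(\mu\)-a.e.\ \(t\) (the paper takes \(c=\chi/2\), you take \(c=\chi-\varepsilon\)), pointwise vanishing of the indicator against the polynomial threshold \(b/n\), and dominated convergence with respect to the finite measure \(\mu\).
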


\begin{proof}
Fix an outcome in the probability-one event on which exact dimensionality holds, and suppose that \(Z_\infty>0\).  
Then, for \(\mu\)-almost every \(t\),
\[
        \lim_{m\to\infty}
        -\frac{1}{m}\log_2\mu(I_m(t))
        =
        \chi>0.
\]
Hence, for \(\mu\)-almost every \(t\), there exists \(m_0(t)<\infty\) such that
\(
\mu(I_m(t))\leq2^{-\chi m/2}
\)
for every \(m\geq m_0(t)\).  
Since \(n\,2^{-\chi(n+h)/2}\to0\), we have \(2^{-\chi(n+h)/2}<b/n\) for all sufficiently large \(n\).  
It follows that
\[
        \mathbf{1}_{\{\mu(I_{n+h}(t))>b/n\}}
        \longrightarrow0
\]
for \(\mu\)-almost every \(t\).

The integrand is bounded by \(1\), and \(\mu([0,1])=Z_\infty<\infty\).  
Dominated convergence applied to the integral representation of \(T_{n,h}^{>}(b)\) yields the assertion.
\end{proof}

For \(t\geq0\), define
\[
        H_Z(t)
        :=
        \mathbb{E}
        \left[
        Z_\infty\mathbf{1}_{\{Z_\infty>t\}}
        \right].
\]
The function \(H_Z\) is nonincreasing.  
Since \(Z_\infty\in L^1\) and \(\mathbb{E}Z_\infty=1\), 
dominated convergence gives \(H_Z(t)\to0\) as \(t\to\infty\).

For \(b>0\), \(h\in\mathbb{N}_0\), and \(n\geq1\), define
\[
        B_{n,h}^{\mathrm{term}}(b)
        :=
        \sum_{|u|=n+h}
        A_u
        H_Z\left(\frac{b}{nA_u}\right),
\]
where the summand is defined to be zero when \(A_u=0\).
Equivalently, for \(A_u>0\),
\[
        A_u
        H_Z\left(\frac{b}{nA_u}\right)
        =
        A_u
        \mathbb{E}
        \left[
        Z_\infty
        \mathbf{1}_{\{A_uZ_\infty>b/n\}}
        \right],
\]
where the expectation is taken with respect to an independent copy of \(Z_\infty\), with \(A_u\) held fixed.

\begin{proposition}\label{P:terminal-compensator}
For every \(b>0\) and \(h\in\mathbb{N}_0\),
\[
        B_{n,h}^{\mathrm{term}}(b)
        \longrightarrow0
        \qquad(n\to\infty)
\]
almost surely.
\end{proposition}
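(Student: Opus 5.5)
Fix \(b>0\), \(h\in\mathbb N_0\), and \(0<\theta<\chi\). The plan is to split the sum defining \(B_{n,h}^{\mathrm{term}}(b)\) according to whether the cylinder \(u\) with \(|u|=n+h\) is a lower-deviation cylinder, i.e.\ \(V(u)\le\theta(n+h)\), or not.

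\emph{Lower-deviation cylinders.} Since \(H_Z\le\mathbb E Z_\infty=1\), their total contribution is at most
\[
\sum_{|u|=n+h}A_u\mathbf 1_{\{V(u)\le\theta(n+h)\}}=L_{n+h}(\theta).
\]
This tends to zero almost surely by Theorem~\ref{T:spine-lower-deviation}.

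\emph{Typical cylinders.} For the remaining cylinders, \(A_u<2^{-\theta(n+h)}\), so
\[
\frac{b}{nA_u}\ge \frac{b}{n}\,2^{\theta(n+h)}=:t_n\longrightarrow\infty.
\]
Because \(H_Z\) is nonincreasing, each summand satisfies \(A_uH_Z(b/(nA_u))\le A_uH_Z(t_n)\). Summing over these cylinders bounds their total contribution by \(Z_{n+h}H_Z(t_n)\). Now \(Z_{n+h}\to Z_\infty<\infty\) almost surely, and \(H_Z(t_n)\to0\) since \(Z_\infty\in L^1\). Hence this term also tends to zero almost surely.

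Adding the two bounds gives
\[
B_{n,h}^{\mathrm{term}}(b)\le L_{n+h}(\theta)+Z_{n+h}H_Z(t_n)\longrightarrow0
\]
almost surely. This holds on the whole probability space, not only on non-extinction. Cylinders with \(A_u=0\) contribute nothing by convention.

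I do not expect a genuine obstacle. The only substantive input is the almost-sure lower-deviation limit from Theorem~\ref{T:spine-lower-deviation}, which has already been established. The one point requiring care is that the threshold \(t_n\) must grow. It does, because the polynomial loss \(n\) is beaten by the exponential gain \(2^{\theta(n+h)}\) for any fixed \(\theta>0\). The convergence \(H_Z(t)\to0\) needs only \(L^1\)-integrability of \(Z_\infty\), so no higher moment enters.
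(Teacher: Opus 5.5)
Your proposal is correct and follows the paper's proof essentially verbatim. The paper uses the same split at \(V(u)\le\theta(n+h)\) and bounds the lower-deviation part by \(L_{n+h}(\theta)\) using \(H_Z\le1\). It bounds the typical part by \(H_Z\bigl(b2^{\theta(n+h)}/n\bigr)\,Z_{n+h}\), using monotonicity of \(H_Z\), \(H_Z(t)\to0\), and \(Z_{n+h}\to Z_\infty<\infty\) almost surely.
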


\begin{proof}
Fix \(0<\theta<\chi\), and put \(r:=n+h\).  
Split the sum according to whether \(V(u)\leq\theta r\) or \(V(u)>\theta r\).

Since \(H_Z(t)\leq\mathbb{E}Z_\infty=1\), the lower-deviation contribution satisfies
\[
\begin{aligned}
        \sum_{|u|=r}
        A_u
        H_Z\left(\frac{b}{nA_u}\right)
        \mathbf{1}_{\{V(u)\leq\theta r\}}
\leq\sum_{|u|=r}
        A_u\mathbf{1}_{\{V(u)\leq\theta r\}}
=L_r(\theta)
        \longrightarrow0
\end{aligned}
\]
almost surely, by Theorem~\ref{T:spine-lower-deviation}.

For the remaining contribution, terms with \(A_u=0\) vanish.  
If \(A_u>0\) and \(V(u)>\theta r\), then \(A_u<2^{-\theta r}\), and hence
\(
        \frac{b}{nA_u}
        >
        \frac{b2^{\theta r}}{n}.
\)
Since \(H_Z\) is nonincreasing,
\[
\begin{aligned}
        \sum_{|u|=r}
        A_u
        H_Z\left(\frac{b}{nA_u}\right)
        \mathbf{1}_{\{V(u)>\theta r\}}\leq
        H_Z\left(\frac{b2^{\theta r}}{n}\right)
        \sum_{|u|=r}
        A_u\mathbf{1}_{\{V(u)>\theta r\}}\leq
        H_Z\left(\frac{b2^{\theta r}}{n}\right)Z_r.
\end{aligned}
\]
Since \(r=n+h\),
\[
        \frac{b2^{\theta r}}{n}
        =
        b2^{\theta h}\frac{2^{\theta n}}{n}
        \longrightarrow\infty.
\]
It follows that \(H_Z(b2^{\theta r}/n)\to0\).
Since also \(Z_r\to Z_\infty<\infty\) almost surely, 
the typical contribution tends to zero almost surely.  
Combining the two parts proves the proposition.
\end{proof}

\begin{theorem}\label{T:terminal-adaptive-cutoff}
For every \(b>0\) and \(h\in\mathbb{N}_0\),
\[
        T_{n,h}^{>}(b)\longrightarrow0,
        \qquad
        B_{n,h}^{\mathrm{term}}(b)\longrightarrow0
        \qquad(n\to\infty)
\]
almost surely on \(\{Z_\infty>0\}\).
\end{theorem}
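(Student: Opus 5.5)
This theorem simply packages the two preceding propositions, so the plan is to combine them on a common probability-one event. For fixed \(b>0\) and \(h\in\mathbb N_0\), Proposition~\ref{P:terminal-positive-tail} gives a probability-one event \(\mathcal E_1(b,h)\). On \(\mathcal E_1(b,h)\cap\{Z_\infty>0\}\) we have \(T_{n,h}^{>}(b)\to0\). Proposition~\ref{P:terminal-compensator} gives a probability-one event \(\mathcal E_2(b,h)\) on which \(B_{n,h}^{\mathrm{term}}(b)\to0\), with no restriction to non-extinction. The intersection \(\mathcal E_1(b,h)\cap\mathcal E_2(b,h)\) still has probability one, and on its part inside \(\{Z_\infty>0\}\) both convergences hold. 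This already proves the statement for each fixed pair \((b,h)\).

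If a single event is wanted for all \(b>0\) and all \(h\) simultaneously, in line with the conventions of Subsection~\ref{SS:preliminaries-full-event}, intersect over \(h\in\mathbb N_0\) and over rational \(b>0\). One then extends to arbitrary \(b\) by monotonicity.

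\textbf{The terminal tail.} The map \(b\mapsto T_{n,h}^{>}(b)\) is nonincreasing, since the indicator \(\mathbf 1_{\{\mu(I_u)>b/n\}}\) decreases in \(b\). Hence for real \(b>0\) we choose a rational \(b'\in(0,b)\) and get \(T_{n,h}^{>}(b)\leq T_{n,h}^{>}(b')\to0\).

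\textbf{The compensator.} Here \(H_Z\) is nonincreasing and \(b/(nA_u)\) increases in \(b\), so \(B_{n,h}^{\mathrm{term}}(b)\) is also nonincreasing in \(b\). The same comparison with a smaller rational \(b'\) applies.

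Nothing here is a genuine obstacle. The only care needed is bookkeeping: the terminal-tail statement holds only on non-extinction, while the compensator statement holds almost surely outright. The simultaneous version requires the countable intersection together with the monotonicity in \(b\).
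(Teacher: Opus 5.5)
Your proposal is correct and matches the paper's proof, which likewise just cites Proposition~\ref{P:terminal-positive-tail} for the first convergence and Proposition~\ref{P:terminal-compensator} for the second. Your extra monotonicity-in-\(b\) argument giving one event valid for all \(b>0\) is also correct, though the paper does not need it because it only ever uses countably many rational caps \(b\).
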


\begin{proof}
The first convergence is Proposition~\ref{P:terminal-positive-tail}, 
and the second is Proposition~\ref{P:terminal-compensator}.
\end{proof}

\subsection{Terminal descendants over planar grids}\label{SS:spine-terminal-descendants}

We first recall the classical real-valued Bernstein inequality
\cite[Theorem~2.8.4, p.~38]{VershyninHDP}.

\begin{lemma}[Real-valued Bernstein inequality]
\label{L:bernstein-real-valued}
Let \(Y_1,\ldots,Y_N\) be independent, mean-zero, real-valued random variables such that
\(|Y_k|\leq K\) almost surely for every \(1\leq k\leq N\).
Set
\(
\sigma^2=\sum_{k=1}^{N}\mathbb{E}Y_k^2.
\)
Then, for every \(t\geq0\),
\[
        \mathbb{P}
        \left(
        \left|
        \sum_{k=1}^{N}Y_k
        \right|
        \geq t
        \right)
        \leq
        2
        \exp
        \left(
        -\frac{t^2}{2(\sigma^2+Kt/3)}
        \right).
\]
\end{lemma}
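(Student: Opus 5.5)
The statement is the classical Bernstein inequality, and I will prove it by the standard Chernoff route. By symmetry (replace \(Y_k\) by \(-Y_k\), which preserves all hypotheses), it suffices to bound the upper tail \(\mathbb{P}(\sum_k Y_k\geq t)\) by \(\exp(-t^2/(2(\sigma^2+Kt/3)))\); a union bound then gives the factor \(2\). For \(\lambda\in(0,3/K)\), Markov's inequality applied to \(e^{\lambda\sum_kY_k}\), together with independence, gives
\[
\mathbb{P}\Bigl(\sum_{k=1}^NY_k\geq t\Bigr)\leq e^{-\lambda t}\prod_{k=1}^N\mathbb{E}e^{\lambda Y_k}.
\]

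The first step is a bound on each moment generating function. Expand \(e^{x}=1+x+\sum_{p\geq2}x^p/p!\). Since \(\mathbb{E}Y_k=0\) and \(|Y_k|^p\leq K^{p-2}Y_k^2\), and since \(p!\geq2\cdot3^{p-2}\), we get
\[
\mathbb{E}e^{\lambda Y_k}\leq1+\frac{\lambda^2\mathbb{E}Y_k^2}{2}\sum_{p\geq2}\Bigl(\frac{\lambda K}{3}\Bigr)^{p-2}
=1+\frac{\lambda^2\mathbb{E}Y_k^2}{2(1-\lambda K/3)}.
\]
Applying \(1+x\leq e^x\) and multiplying over \(k\) yields the exponent
\[
-\lambda t+\frac{\lambda^2\sigma^2}{2(1-\lambda K/3)}.
\]

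The second step is the choice of \(\lambda\), which is the only delicate point. I take \(\lambda=t/(\sigma^2+Kt/3)\). Then \(\lambda K/3<1\), because \(\lambda K/3=(Kt/3)/(\sigma^2+Kt/3)\), and we have \(1-\lambda K/3=\sigma^2/(\sigma^2+Kt/3)\). Substituting, the exponent becomes
\[
-\frac{t^2}{\sigma^2+Kt/3}+\frac{t^2}{2(\sigma^2+Kt/3)}=-\frac{t^2}{2(\sigma^2+Kt/3)}.
\]

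Degenerate cases are handled separately. If \(t=0\) the bound is trivial. If \(\sigma^2=0\), then every \(Y_k\) vanishes almost surely, so the probability is \(0\) for \(t>0\). Alternatively, one can avoid dividing by \(\sigma^2\) by working directly with \(\lambda<3/K\) and taking limits.

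Since the inequality is classical and is cited from \cite[Theorem~2.8.4]{VershyninHDP}, the paper may simply invoke it. The argument above is the self-contained justification.
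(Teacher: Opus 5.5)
Your proof is correct and is the standard exponential-moment (Chernoff) argument. The paper does not reprove the lemma but cites \cite[Theorem~2.8.4]{VershyninHDP}, and its conditional complex version (Lemma~\ref{L:bernstein-conditional-complex}) explicitly invokes this same exponential-moment proof with conditional expectations; your choice \(\lambda=t/(\sigma^2+Kt/3)\) and your separate handling of \(t=0\) and \(\sigma^2=0\) are sound.
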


We shall use the following conditional complex-valued extension.

\begin{lemma}[Conditional complex Bernstein inequality]
\label{L:bernstein-conditional-complex}
Let
\(
        \mathcal{G}\subset\mathcal{F}
\)
be a sub-\(\sigma\)-algebra, and let
\(
        X_1,\ldots,X_N
\)
be square-integrable complex-valued random variables which are conditionally independent given \(\mathcal{G}\).  
Suppose that
\[
        \mathbb{E}
        \left(
        X_k
        \,\middle|\,
        \mathcal{G}
        \right)
        =
        0
        \qquad\text{almost surely}
\]
for every \(1\leq k\leq N\).

Assume that there exist finite nonnegative \(\mathcal{G}\)-measurable random variables \(R\) and \(V\) such that
\[
        |X_k|\leq R
        \qquad\text{almost surely for every }1\leq k\leq N,
\]
and
\[
        \sum_{k=1}^{N}
        \mathbb{E}
        \left(
        |X_k|^2
        \,\middle|\,
        \mathcal{G}
        \right)
        \leq V
        \qquad\text{almost surely}.
\]
Then there exist absolute constants \(C,c>0\) such that, for every \(t>0\),
\[
        \mathbb{P}
        \left(
        \left|
        \sum_{k=1}^{N}X_k
        \right|
        \geq t
        \,\middle|\,
        \mathcal{G}
        \right)
        \leq
        C
        \exp
        \left(
        -\frac{ct^2}{V+Rt}
        \right)
        \qquad\text{almost surely}.
\]
\end{lemma}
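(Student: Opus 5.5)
The plan is to reduce to the real-valued Bernstein inequality, Lemma~\ref{L:bernstein-real-valued}, applied to real and imaginary parts, and to do it conditionally on \(\mathcal G\).

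\textbf{Reduction to real and imaginary parts.} Write \(X_k=U_k+iY_k\) with \(U_k=\operatorname{Re}X_k\) and \(Y_k=\operatorname{Im}X_k\). Since \(\mathcal G\)-conditional expectation commutes with taking real parts, \(\mathbb E(U_k\mid\mathcal G)=\mathbb E(Y_k\mid\mathcal G)=0\). Moreover \(|U_k|,|Y_k|\leq|X_k|\leq R\) and \(\sum_k\mathbb E(U_k^2\mid\mathcal G)\leq V\), with the same bound for the \(Y_k\). The event \(|\sum_k X_k|\geq t\) forces \(|\sum_k U_k|\geq t/\sqrt2\) or \(|\sum_k Y_k|\geq t/\sqrt2\). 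So it suffices to prove the real conditional statement with \(t\) replaced by \(t/\sqrt2\) and then take a union bound; this only costs a factor \(2\) in \(C\) and a change in \(c\).

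\textbf{Conditioning.} I would work with a regular conditional distribution of \((X_1,\dots,X_N)\) given \(\mathcal G\), which exists because the variables take values in \(\mathbb C^N\), a Polish space. Fix a \(\mathcal G\)-measurable version of \(R\) and \(V\). Then for almost every \(\omega\), under the kernel \(\kappa_\omega\):
\begin{itemize}
\item the coordinates are independent, by conditional independence;
\item they are centered;
\item they are bounded by \(R(\omega)\);
\item their variances sum to at most \(V(\omega)\).
\end{itemize}
Each of these is a countable family of almost-sure identities. For instance, independence is checked on a countable generating \(\pi\)-system of rational rectangles, and boundedness via \(\mathbb P(|X_k|>R\mid\mathcal G)=0\). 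Treating \(R,V\) as constants under \(\kappa_\omega\), Lemma~\ref{L:bernstein-real-valued} gives
\[
\kappa_\omega\Bigl(\Bigl|\sum_k U_k\Bigr|\geq s\Bigr)\leq 2\exp\Bigl(-\frac{s^2}{2(V+Rs/3)}\Bigr).
\]
Since \(\kappa_\omega\) evaluated on this event is a version of the conditional probability, the bound holds almost surely. Substituting \(s=t/\sqrt2\) and noting \(2(V+Rt/(3\sqrt2))\leq 2(V+Rt)\) gives the claimed bound with, for example, \(C=4\) and \(c=1/4\).

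\textbf{Main obstacle.} The only delicate point is this measure-theoretic transfer: making sure the \(\mathcal G\)-measurable bounds \(R,V\) can be frozen as constants, and that conditional independence passes to the regular kernel for almost every \(\omega\). If one wants to avoid disintegration, there is an alternative route. Rerun the Chernoff argument directly: use \(\mathbb E(e^{\lambda U_k}\mid\mathcal G)\leq\exp\bigl(\lambda^2\mathbb E(U_k^2\mid\mathcal G)/(2(1-\lambda R/3))\bigr)\) for \(\lambda\) a \(\mathcal G\)-measurable random variable with \(0<\lambda<3/R\), and use conditional independence to factor the conditional moment generating function. Then optimize \(\lambda=s/(V+Rs/3)\) pointwise. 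This yields the same bound without invoking regular conditional laws.
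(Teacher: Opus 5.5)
Your proof is correct and follows essentially the same route as the paper: split into real and imaginary parts, apply a conditional real Bernstein inequality to each, and take a conditional union bound (the paper uses the threshold $t/2$ and gets $C=4$, $c=1/8$, while your $t/\sqrt{2}$ gives $c=1/4$). The paper obtains the conditional real Bernstein bound by rerunning the exponential-moment argument with conditional expectations, which is your alternative route; your main route via regular conditional distributions is an equally valid, somewhat more black-box way to transfer the unconditional Lemma~\ref{L:bernstein-real-valued}.
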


\begin{proof}
The standard exponential-moment proof of Lemma \ref{L:bernstein-real-valued}, 
with conditional expectations in place of ordinary expectations, 
gives the following conditional real-valued version.  
If \(Y_1,\ldots,Y_N\) are conditionally independent and centered given \(\mathcal{G}\), with
\[
        |Y_k|\leq R,
        \qquad
        \sum_{k=1}^{N}
        \mathbb{E}
        \left(
        Y_k^2
        \,\middle|\,
        \mathcal{G}
        \right)
        \leq V,
\]
then, for every \(u\geq0\),
\[
        \mathbb{P}
        \left(
        \left|
        \sum_{k=1}^{N}Y_k
        \right|
        \geq u
        \,\middle|\,
        \mathcal{G}
        \right)
        \leq
        2\exp
        \left(
        -\frac{u^2}{2(V+Ru/3)}
        \right)
\]
almost surely.

The real-valued random variables
\(
\operatorname{Re}X_1,\ldots,\operatorname{Re}X_N
\)
are conditionally independent and centered given \(\mathcal{G}\), and the same is true of
\(
\operatorname{Im}X_1,\ldots,\operatorname{Im}X_N.
\)
Moreover,
\[
        |\operatorname{Re}X_k|,
        |\operatorname{Im}X_k|
        \leq R,
\]
and
\[
        \sum_{k=1}^{N}
        \mathbb{E}
        \left(
        (\operatorname{Re}X_k)^2
        \,\middle|\,
        \mathcal{G}
        \right)
        \leq V,
        \qquad
        \sum_{k=1}^{N}
        \mathbb{E}
        \left(
        (\operatorname{Im}X_k)^2
        \,\middle|\,
        \mathcal{G}
        \right)
        \leq V.
\]
Since
\[
        |z|\geq t
        \quad\Longrightarrow\quad
        |\operatorname{Re}z|\geq\frac{t}{2}
        \quad\text{or}\quad
        |\operatorname{Im}z|\geq\frac{t}{2},
\]
the conditional union bound gives
\[
        \mathbb{P}
        \left(
        \left|
        \sum_{k=1}^{N}X_k
        \right|
        \geq t
        \,\middle|\,
        \mathcal{G}
        \right)
        \leq
        4\exp
        \left(
        -\frac{(t/2)^2}
        {2(V+Rt/6)}
        \right)
        \leq
        4\exp
        \left(
        -\frac{t^2}{8(V+Rt)}
        \right).
\]
Thus the assertion holds with \(C=4\) and \(c=1/8\).
\end{proof}

\begin{theorem}[Terminal descendants over planar grids]
\label{T:terminal-descendants-planar}
Fix \(K_1\geq1\) and \(h\in\mathbb{N}_0\).  
Let
\[
        \Lambda_n
        \subset
        \left\{
        \xi\in\mathbb{R}^2:
        2^n\leq|\xi|<2^{n+1}
        \right\},
        \qquad n\geq1,
\]
be deterministic finite sets such that, for some \(C_\Lambda<\infty\),
\[
        \#\Lambda_n
        \leq
        C_\Lambda2^{2n}
\]
for every \(n\geq1\).  
For \(|u|=n+h\) and \(\xi\in\Lambda_n\), 
let \(\gamma_{u,n}(\xi)\) be an \(\mathcal{F}_{n+h}\)-measurable complex coefficient satisfying
\(|\gamma_{u,n}(\xi)|\leq1\) almost surely.  
Then
\[
        \sup_{\xi\in\Lambda_n}
        \left|
        \sum_{|u|=n+h}
        A_u
        \bigl(Z_\infty^{(u)}-1\bigr)
        \gamma_{u,n}(\xi)
        \right|
        \longrightarrow0
        \qquad(n\to\infty)
\]
almost surely on \(\mathcal{Z}_{K_1}\cap\{Z_\infty>0\}\).
\end{theorem}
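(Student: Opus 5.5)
Condition on \(\mathcal G:=\mathcal F_{n+h}\) and put \(r:=n+h\). The variables \(Z_\infty^{(u)}\), \(|u|=r\), are i.i.d.\ copies of \(Z_\infty\) independent of \(\mathcal G\), while \(A_u\) and \(\gamma_{u,n}(\xi)\) are \(\mathcal G\)-measurable. The sum is therefore a conditionally independent, conditionally centered sum. It has no second moment, so we truncate at the adaptive level \(b/n\). For fixed \(b>0\) and each \(u\) with \(A_u>0\), write
\[
A_u(Z_\infty^{(u)}-1)
=A_u\bigl(Z_\infty^{(u)}\mathbf 1_{\{A_uZ_\infty^{(u)}\le b/n\}}-m_u\bigr)
+A_uZ_\infty^{(u)}\mathbf 1_{\{A_uZ_\infty^{(u)}>b/n\}}
-A_u\bigl(1-m_u\bigr),
\]
where \(m_u:=\mathbb E[Z_\infty\mathbf 1_{\{A_uZ_\infty\le b/n\}}]\). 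Then \(1-m_u=H_Z(b/(nA_u))\). After multiplying by \(\gamma_{u,n}(\xi)\) and summing, the sum splits into three pieces: a centered truncated sum \(\Sigma^{\mathrm{c}}_n(\xi)\); a large-jump part bounded uniformly in \(\xi\) by \(\sum_{|u|=r}\mu(I_u)\mathbf 1_{\{\mu(I_u)>b/n\}}=T^{>}_{n,h}(b)\), using \(\mu(I_u)=A_uZ^{(u)}_\infty\); and a compensator part bounded uniformly in \(\xi\) by \(B^{\mathrm{term}}_{n,h}(b)\). By Theorem~\ref{T:terminal-adaptive-cutoff}, the last two tend to zero almost surely on \(\{Z_\infty>0\}\) for every fixed \(b\). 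These bounds are uniform in \(\xi\), so the grid entropy plays no role for them.

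For \(\Sigma^{\mathrm c}_n(\xi)\) we apply Lemma~\ref{L:bernstein-conditional-complex} for each \(\xi\in\Lambda_n\). The summands \(X_u=A_u(Z\mathbf 1_{\{\cdot\}}-m_u)\gamma_{u,n}(\xi)\) are conditionally centered. They satisfy \(|X_u|\le b/n+A_u m_u\le 2b/n\) once \(\max_{|u|=r}A_u\le b/n\). The conditional variance is at most
\[
\sum_u A_u^2\,\mathbb E[Z^2\mathbf 1_{\{A_uZ\le b/n\}}]\le \frac bn\sum_u A_u\,\mathbb E Z_\infty=\frac bn Z_r\le \frac{bK_1}{n}
\]
on \(\mathcal Z_{K_1}\). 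The bound on the jump size needs \(\max_{|u|=r}A_u\le b/n\). To avoid conditioning on a future event, replace the \(\mathcal G\)-measurable requirements by indicators: multiply the whole estimate by \(\mathbf 1_{\{Z_r\le K_1\}}\), which is \(\mathcal G\)-measurable, and cap every coefficient \(A_u\) at \(b/n\). Terms with \(A_u>b/n\) are handled separately. Indeed, \(A_u>b/n\ge 2^{-\theta r}\) for large \(n\), so \(\sum_{A_u>b/n}A_u\le L_r(\theta)\to0\) by Theorem~\ref{T:spine-lower-deviation}. Their total contribution is then at most
\[
\sum_{A_u>b/n}A_u(Z_\infty^{(u)}+1)\le\sum_{|u|=r}\mu(I_u)\mathbf 1_{\{A_u>b/n\}}+L_r(\theta).
\]
The first term in this bound needs care, because \(\mu(I_u)\) may be small even though \(A_u\) is large. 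We handle it by computing its conditional expectation, which equals the \(A_u\)-sum and hence is at most \(L_r(\theta)\), and then upgrading to an almost sure statement. For this upgrade, apply Lemma~\ref{L:spine-hereditary-amplification} to \(\mu\)-mass: bound the quantity by \(\sum_u A_uZ^{(u)}_\infty\mathbf 1_{\{V(u)\le\theta r\}}\), whose expectation is \(\mathbb E L_r\), and use the same Doob/Borel--Cantelli scheme with the submartingale replaced by the limit masses. This is the step I would check most carefully.

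With these reductions, Lemma~\ref{L:bernstein-conditional-complex} at threshold \(\varepsilon\) gives
\[
\mathbb P\bigl(|\Sigma^{\mathrm c}_n(\xi)|\ge\varepsilon,\ Z_r\le K_1\bigr)\le C\exp\Bigl(-\frac{c\,\varepsilon^2 n}{b(K_1+2\varepsilon)}\Bigr).
\]
A union bound over \(\#\Lambda_n\le C_\Lambda 4^n\) points shows that this is summable in \(n\) once \(b\le b_0(\varepsilon,K_1)\) is chosen small, say \(c\varepsilon^2/(b(K_1+2\varepsilon))>2\log 4\). Borel--Cantelli then gives \(\limsup_n\sup_{\xi\in\Lambda_n}|\Sigma^{\mathrm c}_n(\xi)|\le\varepsilon\) almost surely on \(\mathcal Z_{K_1}\). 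Combined with the vanishing of the jump, compensator and large-\(A_u\) remainders at that fixed \(b\), this yields \(\limsup\sup_\xi|\cdot|\le\varepsilon\). Taking \(\varepsilon=1/k\) over countably many \(k\) completes the argument.

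The main obstacle is that the entropy \(4^n\) must be paid with only \(L^1\) information on \(Z_\infty\). This is exactly why the truncation is at the level \(b/n\) rather than at a constant: the variance is then \(O(b/n)\), and Bernstein produces \(e^{-cn/b}\), which beats \(4^n\) for small \(b\). The price is the dependence on \(b\) in the remainders, which Theorem~\ref{T:terminal-adaptive-cutoff} absorbs for every fixed \(b\).
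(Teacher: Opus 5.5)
Your proposal is correct and follows essentially the paper's proof: you truncate $A_uZ_\infty^{(u)}$ at $b/n$, apply the conditional Bernstein lemma with $R=2b/n$ and $V=(b/n)Z_r$ on $\{Z_r\le K_1\}$, take a union bound over $\Lambda_n$ with $b$ small, and bound the remainders uniformly in $\xi$ by $T^{>}_{n,h}(b)+B^{\mathrm{term}}_{n,h}(b)$. Your separate treatment of $A_u>b/n$, including the step you flagged, is unnecessary, because $A_um_u=\mathbb E\bigl[A_uZ_\infty^{(u)}\mathbf 1_{\{A_uZ_\infty^{(u)}\le b/n\}}\mid\mathcal F_r\bigr]\le b/n$ holds automatically; this is how the paper gets $|Y_{u,n}^{\mathrm{term}}(\xi)|\le 2b/n$. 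The step is also harmless without the amplification argument: for large $n$ one has $\sum_u\mu(I_u)\mathbf 1_{\{A_u>b/n\}}\le T^{>}_{n,h}(b)+\frac bn\#\{u:A_u>b/n\}\le T^{>}_{n,h}(b)+L_r(\theta)$.
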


\begin{proof}
Fix a rational number \(a>0\).  
Put \(r:=n+h\), and let \(\tau_n:=b/n\), where the rational number \(b>0\) will be chosen below.  
For \(x\geq0\), define
\[
        \beta_n^{\mathrm{term}}(x)
        :=
        \mathbb{E}
        \left[
        Z_\infty
        \mathbf{1}_{\{xZ_\infty\leq\tau_n\}}
        \right].
\]
For \(|u|=r\) and \(\xi\in\Lambda_n\), set
\[
\begin{aligned}
        Y_{u,n}^{\mathrm{term}}(\xi)
        &:=
        \left(
        A_uZ_\infty^{(u)}
        \mathbf{1}_{\{
        A_uZ_\infty^{(u)}\leq\tau_n
        \}}
        -
        A_u\beta_n^{\mathrm{term}}(A_u)
        \right)
        \gamma_{u,n}(\xi).
\end{aligned}
\]
Conditionally on \(\mathcal{F}_r\), the variables \(Y_{u,n}^{\mathrm{term}}(\xi)\), \(|u|=r\), are independent.  
Moreover, since \(A_u\) and \(\gamma_{u,n}(\xi)\) are \(\mathcal{F}_r\)-measurable and the variables
\(Z_\infty^{(u)}\), \(|u|=r\), 
are independent copies of \(Z_\infty\), independent of \(\mathcal{F}_r\),
\[
        \mathbb{E}
        \left[
        Y_{u,n}^{\mathrm{term}}(\xi)
        \mid\mathcal{F}_r
        \right]
        =
        0.
\]
Both the truncated term and its conditional expectation have absolute value at most \(\tau_n\).  
Hence
\[
        \left|
        Y_{u,n}^{\mathrm{term}}(\xi)
        \right|
        \leq
        2\tau_n
        =
        \frac{2b}{n}.
\]

For the conditional second moment, conditional centering and \(|\gamma_{u,n}(\xi)|\leq1\) give
\[
\begin{aligned}
        \mathbb{E}
        \left[
        \left|
        Y_{u,n}^{\mathrm{term}}(\xi)
        \right|^2
        \mid\mathcal{F}_r
        \right]
        &\leq
        A_u^2
        \mathbb{E}
        \left[
        \left(Z_\infty^{(u)}\right)^2
        \mathbf{1}_{\{
        A_uZ_\infty^{(u)}\leq\tau_n
        \}}
        \,\middle|\,
        \mathcal{F}_r
        \right]
\\
        &\leq
        \tau_nA_u
        \mathbb{E}
        \left[
        Z_\infty^{(u)}
        \mid\mathcal{F}_r
        \right]
\\
        &=
        \tau_nA_u.
\end{aligned}
\]
Consequently,
\[
        \sum_{|u|=r}
        \mathbb{E}
        \left[
        \left|
        Y_{u,n}^{\mathrm{term}}(\xi)
        \right|^2
        \mid\mathcal{F}_r
        \right]
        \leq
        \tau_nZ_r
        =
        \frac{b}{n}Z_r.
\]

For fixed \(\xi\in\Lambda_n\), let
\[
        E_{n,\xi}
        :=
        \left\{
        \left|
        \sum_{|u|=r}
        Y_{u,n}^{\mathrm{term}}(\xi)
        \right|
        >
        \frac{a}{3}
        \right\}.
\]
Applying Lemma~\ref{L:bernstein-conditional-complex} conditionally on
\(\mathcal{F}_r\), with
\[
        R=\frac{2b}{n},
        \qquad
        V=\frac{b}{n}Z_r,
        \qquad
        t=\frac{a}{3},
\]
we obtain, on \(\{Z_r\leq K_1\}\),
\[
        \mathbb{P}
        \left(
        E_{n,\xi}
        \mid\mathcal{F}_r
        \right)
        \leq
        C\exp
        \left(
        -c_{a,K_1}\frac{n}{b}
        \right),
\]
where \(c_{a,K_1}>0\) is independent of \(n\), \(\xi\), and \(b\).
Since
\(\mathcal{Z}_{K_1}\subseteq\{Z_r\leq K_1\}\) and
\(\{Z_r\leq K_1\}\in\mathcal{F}_r\),
\[
        \mathbb{P}
        \left(
        E_{n,\xi}
        \cap\mathcal{Z}_{K_1}
        \right)
        \leq
        \mathbb{P}
        \left(
        E_{n,\xi}
        \cap\{Z_r\leq K_1\}
        \right)
        =
        \mathbb{E}
        \left[
        \mathbf{1}_{\{Z_r\leq K_1\}}
        \mathbb{P}
        \left(
        E_{n,\xi}
        \mid\mathcal{F}_r
        \right)
        \right]
        \leq
        C\exp
        \left(
        -c_{a,K_1}\frac{n}{b}
        \right).
\]
Taking a union bound over \(\Lambda_n\), we obtain
\[
\begin{aligned}
        \mathbb{P}
        \left(
        \left\{
        \sup_{\xi\in\Lambda_n}
        \left|
        \sum_{|u|=r}
        Y_{u,n}^{\mathrm{term}}(\xi)
        \right|
        >
        \frac{a}{3}
        \right\}
        \cap\mathcal{Z}_{K_1}
        \right)\leq
        CC_\Lambda
        2^{2n}
        \exp
        \left(
        -c_{a,K_1}\frac{n}{b}
        \right).
\end{aligned}
\]
Choose \(b>0\) rational and sufficiently small that \(c_{a,K_1}/b>2\log2\).
The preceding probabilities are then summable in \(n\).  
By the Borel--Cantelli lemma, almost surely on \(\mathcal{Z}_{K_1}\),
\[
        \sup_{\xi\in\Lambda_n}
        \left|
        \sum_{|u|=r}
        Y_{u,n}^{\mathrm{term}}(\xi)
        \right|
        \leq
        \frac{a}{3}
\]
for all sufficiently large \(n\).

It remains to compare the original terminal descendant sum with the centered truncated sum.  
Since \(\mathbb{E}Z_\infty=1\), uniformly in \(\xi\in\Lambda_n\),
\[
\begin{aligned}
        &\left|
        \sum_{|u|=r}
        A_u
        \bigl(Z_\infty^{(u)}-1\bigr)
        \gamma_{u,n}(\xi)
        -
        \sum_{|u|=r}
        Y_{u,n}^{\mathrm{term}}(\xi)
        \right|
\\
        &\qquad\leq
        \sum_{|u|=r}
        A_uZ_\infty^{(u)}
        \mathbf{1}_{\{
        A_uZ_\infty^{(u)}>b/n
        \}}
        +
        \sum_{|u|=r}
        A_u
        \mathbb{E}
        \left[
        Z_\infty
        \mathbf{1}_{\{
        A_uZ_\infty>b/n
        \}}
        \right]
\\
        &\qquad=
        T_{n,h}^{>}(b)
        +
        B_{n,h}^{\mathrm{term}}(b),
\end{aligned}
\]
where, in the second sum, the expectation is taken with respect to an independent copy of \(Z_\infty\), 
with \(A_u\) held fixed.
The last equality uses \(\mu(I_u)=A_uZ_\infty^{(u)}\).

By Theorem~\ref{T:terminal-adaptive-cutoff}, the right-hand side tends to zero almost surely on \(\{Z_\infty>0\}\).  
Therefore, on \(\mathcal{Z}_{K_1}\cap\{Z_\infty>0\}\), 
the original terminal descendant supremum is eventually at most \(a\).  
Intersecting over all rational \(a>0\) proves the theorem.
\end{proof}

\subsection{The abstract stopped skeleton theorem}
\label{SS:spine-abstract-skeleton}

We next establish the abstract martingale criterion used in both the interval and fixed-arc proofs.  
For integers \(k\) and \(m\), set \(\omega_{k,m}:=\min\{1,2^{k-m}\}\).

The following complex-valued form follows from Freedman's inequality
\cite{Freedman1975} by applying the real-valued estimate to the real and imaginary parts.

\begin{lemma}\label{L:spine-complex-freedman}
Let \(N\in\mathbb{N}\), and let \((M_j,\mathcal{G}_j)_{j=0}^{N}\) be a complex-valued martingale 
with \(M_0=0\) and differences \(\Delta_j:=M_j-M_{j-1}\), \(1\leq j\leq N\).
Assume that there exist deterministic constants \(R,V>0\) such that,
almost surely,
\[
        |\Delta_j|\leq R
        \quad(1\leq j\leq N),
        \qquad
        \sum_{j=1}^{N}
        \mathbb{E}
        \left[
        |\Delta_j|^2\mid\mathcal{G}_{j-1}
        \right]
        \leq V.
\]
Then, for every \(t>0\),
\[
        \mathbb{P}
        \left(
        \max_{0\leq j\leq N}|M_j|>t
        \right)
        \leq
        4\exp
        \left(
        -\frac{t^2}{8(V+Rt)}
        \right).
\]
\end{lemma}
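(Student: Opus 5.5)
The plan is to reduce to Freedman's real-valued inequality applied separately to the real and imaginary parts, exactly as in the proof of Lemma~\ref{L:bernstein-conditional-complex}. Freedman's inequality \cite{Freedman1975} states the following. Let $(X_j,\mathcal G_j)$ be a real martingale difference sequence with $X_j\leq R$ almost surely, and let $W_k=\sum_{j\leq k}\mathbb E[X_j^2\mid\mathcal G_{j-1}]$. Then
\[
\mathbb P\bigl(\exists k:\ \textstyle\sum_{j\leq k}X_j\geq u,\ W_k\leq V\bigr)\leq\exp\!\left(-\frac{u^2}{2(V+Ru)}\right).
\]
Applying this to both $X_j$ and $-X_j$ gives the two-sided maximal bound
\[
\mathbb P\Bigl(\max_{k\leq N}\bigl|\textstyle\sum_{j\leq k}X_j\bigr|\geq u\Bigr)\leq2\exp\!\left(-\frac{u^2}{2(V+Ru)}\right),
\]
whenever $|X_j|\leq R$ and $W_N\leq V$ almost surely. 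The condition $W_k\leq V$ holds for every $k$ because $W_k\leq W_N$.

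First, set $X_j=\operatorname{Re}\Delta_j$ and $Y_j=\operatorname{Im}\Delta_j$. Since $\mathcal G_{j-1}$-conditional expectation commutes with taking real and imaginary parts, both are real martingale difference sequences with respect to $(\mathcal G_j)$. Moreover $|X_j|,|Y_j|\leq|\Delta_j|\leq R$. The conditional variances satisfy
\[
\sum_j\mathbb E[X_j^2\mid\mathcal G_{j-1}]\leq\sum_j\mathbb E[|\Delta_j|^2\mid\mathcal G_{j-1}]\leq V,
\]
and the same bound holds for $Y_j$, using $|\Delta_j|^2=X_j^2+Y_j^2$.

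Second, note that if $\max_j|M_j|>t$, then for some $j$ either $|\operatorname{Re}M_j|>t/2$ or $|\operatorname{Im}M_j|>t/2$. Hence
\[
\Bigl\{\max_j|M_j|>t\Bigr\}\subseteq\Bigl\{\max_j|\operatorname{Re}M_j|\geq t/2\Bigr\}\cup\Bigl\{\max_j|\operatorname{Im}M_j|\geq t/2\Bigr\}.
\]
The union bound then gives
\[
\mathbb P\Bigl(\max_j|M_j|>t\Bigr)\leq4\exp\!\left(-\frac{(t/2)^2}{2(V+Rt/2)}\right)=4\exp\!\left(-\frac{t^2}{8V+4Rt}\right)\leq4\exp\!\left(-\frac{t^2}{8(V+Rt)}\right).
\]

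There is no real obstacle. The only point needing care is to use the maximal (stopped) form of Freedman's inequality rather than a fixed-time bound, and to check that the deterministic variance bound gives the event $\{W_k\leq V\}$ for all $k$. Both are immediate from the classical statement.
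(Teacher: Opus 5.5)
Your proof is correct and follows the same route as the paper: apply the maximal real-valued Freedman inequality to the coordinate martingales $\operatorname{Re}M_j$ and $\operatorname{Im}M_j$, then take a union bound. The paper splits at $t/\sqrt{2}$ rather than $t/2$, but either threshold gives the stated bound $4\exp\bigl(-t^2/(8(V+Rt))\bigr)$, and your final step $8V+4Rt\leq 8(V+Rt)$ is valid.
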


\begin{remark}
Indeed, \(|M_j|>t\) implies \(|\operatorname{Re}M_j|>t/\sqrt{2}\) or \(|\operatorname{Im}M_j|>t/\sqrt{2}\); 
applying the real-valued Freedman inequality to the two coordinate martingales and using a union bound yields the stated estimate, after weakening the constants.
\end{remark}

Fix \(h\in\mathbb{N}_0\), \(C_0\geq0\), \(M_0\in\mathbb{N}_0\), and \(0<C_\Gamma<\infty\).  
Let
\[
        \Lambda_n
        \subset
        \left\{
        \xi\in\mathbb{R}^2:
        2^n\leq|\xi|<2^{n+1}
        \right\},
        \qquad n\geq1,
\]
be deterministic nonempty finite sets satisfying \(\#\Lambda_n\leq C_\Lambda2^{2n}\) for some \(C_\Lambda<\infty\).  
For each \(n\geq1\) and \(\xi\in\Lambda_n\), let \(\mathcal{B}_{\xi,n}\) be a finite deterministic set such that
\(\#\mathcal{B}_{\xi,n}\leq M_0\).
For every \(d\in\mathcal{B}_{\xi,n}\), let \(m_{\xi,d}\in\mathbb{Z}\) satisfy \(|m_{\xi,d}-n|\leq C_0\).
For \(1\leq k\leq n+h\) and \(J\in\mathcal{D}_k([0,1])\), 
let \(\Gamma_J(\xi,d)\) be a complex \(\mathcal{F}_{k-1}\)-measurable coefficient satisfying, 
almost surely,
\[
        |\Gamma_J(\xi,d)|
        \leq
        C_\Gamma A_{J^{-}}\omega_{k,m_{\xi,d}},
        \qquad
        |\Gamma_J(\xi,d)|W_J
        \leq
        C_\Gamma A_J\omega_{k,m_{\xi,d}}.
\]
Define
\[
        \mathcal{S}_n(\xi)
        :=
        \sum_{d\in\mathcal{B}_{\xi,n}}
        \sum_{k=1}^{n+h}
        \sum_{J\in\mathcal{D}_k([0,1])}
        \Gamma_J(\xi,d)(W_J-1).
\]

\begin{theorem}[Abstract fixed-cutoff stopped skeleton]
\label{T:abstract-stopped-skeleton}
For every \(K_1\geq1\),
\[
        \sup_{\xi\in\Lambda_n}
        |\mathcal{S}_n(\xi)|
        \longrightarrow0
        \qquad(n\to\infty)
\]
almost surely on \(\mathcal{Z}_{K_1}\).
\end{theorem}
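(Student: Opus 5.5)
We work on \(\mathcal Z_{K_1}\), fix a rational \(a>0\), and show that eventually \(\sup_{\xi\in\Lambda_n}|\mathcal S_n(\xi)|\le a\). Since \(\#\mathcal B_{\xi,n}\le M_0\), it suffices to treat each band index \(d\) separately with target \(a/M_0\). For fixed \((\xi,d)\), the sum is organized generation by generation: order the dyadic intervals of \(\mathcal D_k\), \(k=1,\dots,n+h\), lexicographically within generations. Each term \(\Gamma_J(\xi,d)(W_J-1)\) has \(\mathcal F_{k-1}\)-measurable coefficient and a factor \(W_J-1\) that is independent of \(\mathcal F_{k-1}\) and of the other weights of generation \(k\). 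The partial sums therefore form a martingale with respect to the filtration obtained by revealing the weights one at a time. Set \(\tau_n=b/n\), with \(b\) a small rational to be chosen. We split each increment into three parts, using the predictable quantity \(|\Gamma_J|W_J\):
\[
\Gamma_J(W_J-1)=\Bigl(\Gamma_JW_J\mathbf 1_{\{|\Gamma_J|W_J\le\tau_n\}}-\Gamma_J\,\mathbb E[W\mathbf 1_{\{|\Gamma_J|W\le\tau_n\}}]\Bigr)+\Gamma_JW_J\mathbf 1_{\{|\Gamma_J|W_J>\tau_n\}}-\Gamma_J\,\mathbb E[W\mathbf 1_{\{|\Gamma_J|W>\tau_n\}}].
\]
Here the expectations are over an independent copy \(W\) with \(\Gamma_J\) frozen, which uses \(\mathbb EW=1\).

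\textbf{Capped part.} The first bracket gives a martingale with increments bounded by \(2\tau_n\). Its conditional variance is at most \(\tau_n|\Gamma_J|\,\mathbb E W\le C_\Gamma\tau_nA_{J^-}\omega_{k,m}\). Summing over \(J\in\mathcal D_k\) gives \(2C_\Gamma\tau_nZ_{k-1}\omega_{k,m}\), and summing over \(k\) with \(\sum_k\omega_{k,m}\le 2+h+C_0\) (as in Lemma~\ref{L:spine-shifted-convolution}) yields a total variance at most \(C\tau_nK_1\) on \(\mathcal Z_{K_1}\). To make this bound deterministic, as Lemma~\ref{L:spine-complex-freedman} requires, we stop the martingale at the first generation at which \(Z_{k-1}>K_1\). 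The stopped martingale agrees with the original on \(\mathcal Z_{K_1}\), and the stopping is predictable since \(Z_{k-1}\) is known before generation \(k\) is revealed. Lemma~\ref{L:spine-complex-freedman} then gives a probability bound \(4\exp(-c_{a,K_1,M_0}\,n/b)\). A union bound over \(\Lambda_n\times\mathcal B_{\xi,n}\), which costs \(C_\Lambda M_02^{2n}\), followed by Borel--Cantelli with \(b\) small, handles this part.

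\textbf{Large jumps and compensator.} Both remaining terms are bounded in absolute value uniformly in \(\xi\), so no further entropy cost arises. Fix \(0<\theta<\chi\).

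For the realized jumps, use the hypothesis \(|\Gamma_J|W_J\le C_\Gamma A_J\omega_{k,m}\le C_\Gamma A_J\). The event \(\{|\Gamma_J|W_J>\tau_n\}\) forces \(A_J>\tau_n/C_\Gamma=b/(C_\Gamma n)\). For \(k\) large, \(b/(C_\Gamma n)>2^{-\theta k}\) because \(k\le n+h\), so the event forces \(V(J)\le\theta k\). The realized-jump sum is therefore at most \(C_\Gamma\sum_k\omega_{k,m}L_k(\theta)\), up to the finitely many small \(k\). Those small \(k\) carry the weight \(\omega_{k,m}\le 2^{k-n+C_0}\) and vanish as \(n\to\infty\). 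Lemma~\ref{L:spine-shifted-convolution} sends this sum to \(0\) uniformly over \(|m-n|\le C_0\).

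For the compensator, we have \(|\Gamma_J|\,\mathbb E[W\mathbf 1_{\{W>\tau_n/|\Gamma_J|\}}]\le C_\Gamma A_{J^-}\omega_{k,m}H_W(\tau_n/(C_\Gamma A_{J^-}))\), where \(H_W(t)=\mathbb E[W\mathbf 1_{\{W>t\}}]\to0\). We split according to whether \(V(J^-)\le\theta(k-1)\) or not, exactly as in Proposition~\ref{P:terminal-compensator}. The lower-deviation part is at most \(2C_\Gamma\sum_k\omega_{k,m}L_{k-1}(\theta)\), which is handled again by Lemma~\ref{L:spine-shifted-convolution}. On the typical part the argument of \(H_W\) is at least \(b2^{\theta(k-1)}/(C_\Gamma n)\).

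\textbf{Main obstacle.} The delicate step is the typical part of the compensator for \(k\) much smaller than \(\log n\), where \(2^{\theta k}/n\) is not large. We plan to handle it through the weights: for \(k\le n/2\), \(\omega_{k,m}\le 2^{C_0-n/2}\), so these generations contribute at most \(2K_1C_\Gamma n2^{C_0-n/2}\to0\) after bounding \(H_W\le1\). For \(k>n/2\) the argument of \(H_W\) tends to infinity uniformly, and the bound \(\sum_k\omega_{k,m}Z_{k-1}\le CK_1\) gives a contribution \(o(1)\). Intersecting over rational \(a\) finishes the proof.
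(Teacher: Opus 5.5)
Your proposal is correct and follows essentially the paper's proof. Both use the same decomposition into predictably capped centered increments (stopped when $Z_{k-1}>K_1$, Freedman, union bound over $\Lambda_n\times\mathcal B_{\xi,n}$, Borel--Cantelli for small rational $b$), realized large jumps, and predictable compensators, the last two controlled by Lemma~\ref{L:spine-shifted-convolution}, the split at $V(J^-)\le\theta(k-1)$, and $H_W(t)\to0$.

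The only variation is in the thresholds. The paper keeps $\omega_{k,m}$ inside them (e.g.\ $2^{\theta(k-1)}\omega_{k,m}^{-1}\ge2^{\vartheta(m-1)}$), which makes every threshold uniformly large for all $k$. You drop it and dispose of low generations with the weight $\omega_{k,m}\le2^{k-n+C_0}$ together with $Z_k\le K_1$. One correction in your jump term: the exceptional generations are $k<\theta^{-1}\log_2(C_\Gamma n/b)$, a set growing like $\log n$, not a fixed finite set, and $k\le n+h$ is not the reason the inequality holds. The same weight argument still disposes of these generations, so the proof goes through.
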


\begin{proof}
It is enough to prove that, for every rational \(a>0\),
\[
        \limsup_{n\to\infty}
        \sup_{\xi\in\Lambda_n}
        |\mathcal{S}_n(\xi)|
        \leq a
\]
almost surely on \(\mathcal{Z}_{K_1}\).

Fix such an \(a>0\), and fix a rational \(0<\theta<\chi\).  
Put \(\tau_n:=b/n\), where the rational number \(b>0\) will be chosen below.  
We divide the proof into three parts.

\boldparagraph{Positive large jumps.}

For \(\xi\in\Lambda_n\) and \(d\in\mathcal{B}_{\xi,n}\), define
\[
        R_n^{\mathrm{jump}}(\xi,d)
        :=
        \sum_{k=1}^{n+h}
        \sum_{J\in\mathcal{D}_k([0,1])}
        |\Gamma_J(\xi,d)|W_J
        \mathbf{1}_{\{
        |\Gamma_J(\xi,d)|W_J>\tau_n
        \}}.
\]
The child-mass coefficient bound implies
\[
\begin{aligned}
        R_n^{\mathrm{jump}}(\xi,d)
        &\leq
        C_\Gamma
        \sum_{k=1}^{n+h}
        \omega_{k,m_{\xi,d}}
        \sum_{J\in\mathcal{D}_k([0,1])}
        A_J
        \mathbf{1}_{\{
        A_J\omega_{k,m_{\xi,d}}>
        b/(C_\Gamma n)
        \}}.
\end{aligned}
\]

We claim that, for all sufficiently large \(n\), uniformly over
\(1\leq k\leq n+h\) and \(m\in\mathbb{Z}\) with \(|m-n|\leq C_0\),
\[
        \left\{
        A_J\omega_{k,m}>
        \frac{b}{C_\Gamma n}
        \right\}
        \subseteq
        \{V(J)\leq\theta k\}.
\]
Indeed, the event on the left implies \(A_J>0\) and
\[
        V(J)
        <
        \log_2\left(\frac{C_\Gamma n}{b}\right)
        +
        \log_2\omega_{k,m}.
\]
Let \(\vartheta:=\min\{\theta,1\}\).  
If \(k\leq m\), then \(\log_2\omega_{k,m}=k-m\), and
\[
        V(J)-\theta k
        <
        \log_2\left(\frac{C_\Gamma n}{b}\right)
        -
        \vartheta(m-1).
\]
Since \(m\geq n-C_0\), the right-hand side tends to \(-\infty\), uniformly in the allowed \(k\) and \(m\).  
If \(k>m\), then \(\omega_{k,m}=1\), while \(k>m\geq n-C_0\); hence
\[
        V(J)
        <
        \log_2\left(\frac{C_\Gamma n}{b}\right)
        \leq
        \theta k
\]
for all sufficiently large \(n\).  
This proves the claim.

Consequently, for all sufficiently large \(n\),
\[
        R_n^{\mathrm{jump}}(\xi,d)
        \leq
        C_\Gamma
        \sum_{k=1}^{n+h}
        \omega_{k,m_{\xi,d}}L_k(\theta).
\]
Lemma~\ref{L:spine-shifted-convolution} and \(\#\mathcal{B}_{\xi,n}\leq M_0\) therefore give
\[
        \sup_{\xi\in\Lambda_n}
        \sum_{d\in\mathcal{B}_{\xi,n}}
        R_n^{\mathrm{jump}}(\xi,d)
        \longrightarrow0
\]
almost surely.

\boldparagraph{Predictable compensators.}

For \(t\geq0\), define
\[
        H_W(t)
        :=
        \mathbb{E}
        \left[
        W\mathbf{1}_{\{W>t\}}
        \right].
\]
The function \(H_W\) is nonincreasing, and \(H_W(t)\to0\) as \(t\to\infty\).  
Define
\[
        R_n^{\mathrm{comp}}(\xi,d)
        :=
        \sum_{k=1}^{n+h}
        \sum_{J\in\mathcal{D}_k([0,1])}
        |\Gamma_J(\xi,d)|
        H_W\left(
        \frac{\tau_n}{|\Gamma_J(\xi,d)|}
        \right),
\]
where the summand is defined to be zero when \(\Gamma_J(\xi,d)=0\).

The contribution from \(k=1\) satisfies, uniformly in \(\xi\) and \(d\),
\[
\begin{aligned}
        \sum_{J\in\mathcal{D}_1([0,1])}
        |\Gamma_J(\xi,d)|
        H_W\left(
        \frac{\tau_n}{|\Gamma_J(\xi,d)|}
        \right)
        &\leq
        \sum_{J\in\mathcal{D}_1([0,1])}
        |\Gamma_J(\xi,d)|
\\
        &\leq
        2C_\Gamma\omega_{1,m_{\xi,d}}
\\
        &\leq
        C2^{-n}.
\end{aligned}
\]

Now let \(k\geq2\), write \(I:=J^{-}\), and first consider the region \(V(I)>\theta(k-1)\).  
Then \(A_I<2^{-\theta(k-1)}\).  
If \(\Gamma_J(\xi,d)\neq0\), the parent-mass coefficient bound gives
\[
        \frac{\tau_n}{|\Gamma_J(\xi,d)|}
        \geq
        \frac{b}{C_\Gamma n}
        2^{\theta(k-1)}
        \omega_{k,m_{\xi,d}}^{-1}.
\]
Let \(\vartheta:=\min\{\theta,1\}\).  
If \(k\leq m_{\xi,d}\), then
\[
        2^{\theta(k-1)}
        \omega_{k,m_{\xi,d}}^{-1}
        =
        2^{\theta(k-1)+m_{\xi,d}-k}
        \geq
        2^{\vartheta(m_{\xi,d}-1)}.
\]
The same lower bound holds when \(k>m_{\xi,d}\).  
Since \(m_{\xi,d}\geq n-C_0\), there exists \(c_0=c_0(\theta)>0\), independent of \(b\), 
such that, for all sufficiently large \(n\),
\[
        \frac{\tau_n}{|\Gamma_J(\xi,d)|}
        \geq
        2^{c_0n}
\]
uniformly over all allowed \(\xi,d,k,J\).

It follows that the typical-parent contribution is bounded by
\[
C H_W(2^{c_0n}) \sum_{k=1}^{n+h} \omega_{k,m_{\xi,d}}Z_{k-1}.
\]
On \(\mathcal{Z}_{K_1}\), this is at most
\(CK_1 H_W(2^{c_0n}) \sum_{k=1}^{n+h} \omega_{k,m_{\xi,d}}\),
which tends to zero uniformly in \(\xi,d\), since
\[
        \sup_{\substack{m\in\mathbb{Z}\\|m-n|\leq C_0}}
        \sum_{k=1}^{n+h}
        \omega_{k,m}
        \leq
        2+h+C_0
\]
for all sufficiently large \(n\).

On the region \(V(I)\leq\theta(k-1)\), use only \(H_W\leq1\).  
Since each parent has two children,
\[
\begin{aligned}
        &\sum_{k=2}^{n+h}
        \sum_{J\in\mathcal{D}_k([0,1])}
        |\Gamma_J(\xi,d)|
        \mathbf{1}_{\{V(J^{-})\leq\theta(k-1)\}}
\\
        &\qquad\leq
        2C_\Gamma
        \sum_{k=2}^{n+h}
        \omega_{k,m_{\xi,d}}L_{k-1}(\theta)
\\
        &\qquad\leq
        4C_\Gamma
        \sum_{q=1}^{n+h}
        \omega_{q,m_{\xi,d}}L_q(\theta),
\end{aligned}
\]
where we used \(\omega_{q+1,m}\leq2\omega_{q,m}\).  
By Lemma~\ref{L:spine-shifted-convolution}, this tends to zero uniformly in \(\xi,d\).  
Hence
\[
        \sup_{\xi\in\Lambda_n}
        \sum_{d\in\mathcal{B}_{\xi,n}}
        R_n^{\mathrm{comp}}(\xi,d)
        \longrightarrow0
\]
almost surely on \(\mathcal{Z}_{K_1}\).

\boldparagraph{Centered capped martingales.}

Let
\[
        \tau_{K_1}
        :=
        \inf\{\ell\geq0:Z_\ell>K_1\},
\]
with the convention \(\inf\varnothing=\infty\).  
For \(x\geq0\), define
\[
        \beta_n^{\mathrm{cap}}(x)
        :=
        \mathbb{E}
        \left[
        W\mathbf{1}_{\{xW\leq\tau_n\}}
        \right].
\]
For \(J\in\mathcal{D}_k([0,1])\), set
\[
\begin{aligned}
        X_{J,n}^{\mathrm{cap}}(\xi,d)
        &:=
        \mathbf{1}_{\{k-1<\tau_{K_1}\}}
        \Gamma_J(\xi,d)
        \left(
        W_J
        \mathbf{1}_{\{
        |\Gamma_J(\xi,d)|W_J\leq\tau_n
        \}}
        -
        \beta_n^{\mathrm{cap}}(|\Gamma_J(\xi,d)|)
        \right).
\end{aligned}
\]
Since \(\{k-1<\tau_{K_1}\}\) and \(\Gamma_J(\xi,d)\) are \(\mathcal{F}_{k-1}\)-measurable 
and \(W_J\) is independent of \(\mathcal{F}_{k-1}\),
\[
        \mathbb{E}
        \left[
        X_{J,n}^{\mathrm{cap}}(\xi,d)
        \mid\mathcal{F}_{k-1}
        \right]
        =0.
\]
Moreover,
\[
        |X_{J,n}^{\mathrm{cap}}(\xi,d)|
        \leq
        2\tau_n
        =
        \frac{2b}{n}.
\]
Conditional centering gives
\[
\begin{aligned}
        \mathbb{E}
        \left[
        |X_{J,n}^{\mathrm{cap}}(\xi,d)|^2
        \mid\mathcal{F}_{k-1}
        \right]
\leq
        \tau_n
        \mathbf{1}_{\{k-1<\tau_{K_1}\}}
        |\Gamma_J(\xi,d)|.
\end{aligned}
\]
Also,
\[
        \sum_{J\in\mathcal{D}_k([0,1])}
        |\Gamma_J(\xi,d)|
        \leq
        2C_\Gamma
        \omega_{k,m_{\xi,d}}Z_{k-1}.
\]
Therefore the stopped predictable quadratic variation is bounded by
\[
\begin{aligned}
        \tau_n
        \sum_{k=1}^{n+h}
        \mathbf{1}_{\{k-1<\tau_{K_1}\}}
        \sum_{J\in\mathcal{D}_k([0,1])}
        |\Gamma_J(\xi,d)|\leq
        C(C_0,h,C_\Gamma,K_1)\frac{b}{n}.
\end{aligned}
\]

For fixed \(\xi,d\), order the pairs \((k,J)\) first by generation and then in a deterministic order within each generation, 
and let the filtration reveal the corresponding weights in this order.
Independence of the weights within each generation ensures that the ordered variables \(X_{J,n}^{\mathrm{cap}}(\xi,d)\) remain martingale differences with respect to this filtration.  
Hence Lemma~\ref{L:spine-complex-freedman} gives, for every \(a_d>0\),
\[
        \mathbb{P}
        \left(
        \left|
        \sum_{k=1}^{n+h}
        \sum_{J\in\mathcal{D}_k([0,1])}
        X_{J,n}^{\mathrm{cap}}(\xi,d)
        \right|
        >
        a_d
        \right)
        \leq
        4\exp\left(-c\frac{n}{b}\right),
\]
where
\(c=c(a_d,C_0,h,C_\Gamma,K_1)>0\) is independent of \(n,\xi,d,b\).

If \(M_0=0\), then \(\mathcal{S}_n\equiv0\).  
Otherwise, set \(a_d:=a/(3M_0)\). 
A union bound over \(\xi\in\Lambda_n\) and \(d\in\mathcal{B}_{\xi,n}\) gives
\[
\begin{aligned}
        \mathbb{P}
        \left(
        \sup_{\xi\in\Lambda_n}
        \sum_{d\in\mathcal{B}_{\xi,n}}
        \left|
        \sum_{k=1}^{n+h}
        \sum_{J\in\mathcal{D}_k([0,1])}
        X_{J,n}^{\mathrm{cap}}(\xi,d)
        \right|
        >
        \frac{a}{3}
        \right)
\leq
        4C_\Lambda M_0
        2^{2n}
        \exp\left(-c\frac{n}{b}\right).
\end{aligned}
\]
Choose \(b>0\) rational and sufficiently small that \(c/b>2\log 2\).  
The right-hand side is then summable in \(n\).
By the Borel--Cantelli lemma, almost surely,
\[
        \sup_{\xi\in\Lambda_n}
        \sum_{d\in\mathcal{B}_{\xi,n}}
        \left|
        \sum_{k=1}^{n+h}
        \sum_{J\in\mathcal{D}_k([0,1])}
        X_{J,n}^{\mathrm{cap}}(\xi,d)
        \right|
        \leq
        \frac{a}{3}
\]
for all sufficiently large \(n\).

On \(\mathcal{Z}_{K_1}\), the stopping never occurs.  
Since \(\mathbb{E}W=1\), for every \(\xi,d\),
\[
\begin{aligned}
        \left|
        \sum_{k=1}^{n+h}
        \sum_{J\in\mathcal{D}_k([0,1])}
        \Gamma_J(\xi,d)(W_J-1)
        -
        \sum_{k=1}^{n+h}
        \sum_{J\in\mathcal{D}_k([0,1])}
        X_{J,n}^{\mathrm{cap}}(\xi,d)
        \right|\leq
        R_n^{\mathrm{jump}}(\xi,d)
        +
        R_n^{\mathrm{comp}}(\xi,d).
\end{aligned}
\]
The positive large-jump and predictable-compensator estimates imply that, 
after summing over bands, the right-hand side tends to zero uniformly in \(\xi\in\Lambda_n\) on \(\mathcal{Z}_{K_1}\).
Consequently,
\[
        \limsup_{n\to\infty}
        \sup_{\xi\in\Lambda_n}
        |\mathcal{S}_n(\xi)|
        \leq
        \frac{a}{3}
        \leq
        a
\]
almost surely on \(\mathcal{Z}_{K_1}\).  
Intersecting over all rational \(a>0\) proves the theorem.
\end{proof}

\begin{remark}\label{R:spine-skeleton-threshold-form}
In applications of Theorem~\ref{T:abstract-stopped-skeleton}, after fixing rational \(a>0\), 
rational \(0<\theta<\chi\), and \(C_\Lambda,K_1,h,M_0,C_0,C_\Gamma\), 
choose a rational cap \(b>0\) small enough that the capped Freedman bound is summable over the planar grid. 
The large-jump and compensator terms vanish by the shifted convolution estimate and
\(
\mathbb{E}\!\left[W\mathbf{1}_{\{W>t\}}\right]\to0.
\)
Intersecting over rational \(a\) yields convergence; 
no uniform choice of \(b\) is required.
\end{remark}

\subsection{The interval Rajchman theorem}\label{SS:spine-interval-assembly}

\begin{proof}[Proof of Theorem~\ref{T:main-interval-rajchman}]
For \(n\geq1\), set
\[
        S_n^{\mathrm{int}}
        :=
        \sup_{2^n\leq|\xi|<2^{n+1}}
        |\widehat{\mu}(\xi)|.
\]
It is enough to prove that
\[
        S_n^{\mathrm{int}}
        \longrightarrow0
        \qquad(n\to\infty)
\]
almost surely on \(\{Z_\infty>0\}\).

Fix a rational tolerance \(\varepsilon>0\) and \(K,K_1\in\mathbb{N}\). 
Work on the localized event
\[
        \Omega_K
        \cap
        \mathcal{Z}_{K_1}
        \cap
        \{Z_\infty>0\}.
\]

\boldparagraph{Gridding.}

On \(\Omega_K\),
\[
\begin{aligned}
        |\widehat{\mu}(\xi)-\widehat{\mu}(\eta)|
        \leq
        \int_0^1
        \left|
        \mathrm{e}^{-2\pi i\xi t}
        -
        \mathrm{e}^{-2\pi i\eta t}
        \right|
        \,\mathrm{d}\mu(t)
\leq
        2\pi|\xi-\eta|Z_\infty
\leq
        2\pi K|\xi-\eta|.
\end{aligned}
\]
Let \(\delta:=\varepsilon/(64\pi K)\).
For each \(n\geq1\), choose a deterministic \(\delta\)-net
\[
        \Lambda_n^{\mathrm{int}}(K,\varepsilon)
        \subset
        \left\{
        \xi\in\mathbb{R}:
        2^n\leq|\xi|<2^{n+1}
        \right\}
\]
with
\[
        \#\Lambda_n^{\mathrm{int}}(K,\varepsilon)
        \leq
        C_{K,\varepsilon}2^n.
\]
Then, on \(\Omega_K\),
\[
        S_n^{\mathrm{int}}>\varepsilon
        \quad\Longrightarrow\quad
        \exists\,\xi
        \in
        \Lambda_n^{\mathrm{int}}(K,\varepsilon)
        \ \text{such that}\
        |\widehat{\mu}(\xi)|
        >
        \frac{31}{32}\varepsilon.
\]
We regard \(\Lambda_n^{\mathrm{int}}(K,\varepsilon)\) as a subset of \(\mathbb{R}^2\) by embedding \(\mathbb{R}\) as the first coordinate axis.  
The embedded grid lies in the corresponding planar annulus and satisfies
\[
        \#\Lambda_n^{\mathrm{int}}(K,\varepsilon)
        \leq
        C_{K,\varepsilon}2^{2n}.
\]

\boldparagraph{Limiting atomization.}

Let \(h\in\mathbb{N}_0\), to be chosen in the conclusion, and put \(r:=n+h\). 
Fix once and for all a deterministic point \(t_u\in I_u\) for every finite word \(u\).
If \(|u|=r\), \(t\in I_u\), and \(2^n\leq|\xi|<2^{n+1}\), then
\[
        |\xi||t-t_u|
        \leq
        2^{n+1}2^{-n-h}
        =
        2^{1-h}.
\]
Consequently, on \(\Omega_K\),
\[
        \left|
        \widehat{\mu}(\xi)
        -
        \sum_{|u|=n+h}
        \mu(I_u)\mathrm{e}^{-2\pi i\xi t_u}
        \right|
        \leq
        CK2^{-h}
\]
uniformly over \(2^n\leq|\xi|<2^{n+1}\).

Using \(\mu(I_u)=A_uZ_\infty^{(u)}\), we have
\[
\begin{aligned}
        \sum_{|u|=n+h}
        \mu(I_u)\mathrm{e}^{-2\pi i\xi t_u}
        =
        \sum_{|u|=n+h}
        A_u\mathrm{e}^{-2\pi i\xi t_u}+
        \sum_{|u|=n+h}
        A_u
        \bigl(Z_\infty^{(u)}-1\bigr)
        \mathrm{e}^{-2\pi i\xi t_u}.
\end{aligned}
\]
The coefficients
\(\gamma_{u,n}(\xi):=\mathrm{e}^{-2\pi i\xi t_u}\) are deterministic and satisfy \(|\gamma_{u,n}(\xi)|=1\).
Hence Theorem~\ref{T:terminal-descendants-planar} gives, for fixed \(K,K_1,\varepsilon,h\),
\[
        \sup_{\xi\in\Lambda_n^{\mathrm{int}}(K,\varepsilon)}
        \left|
        \sum_{|u|=n+h}
        A_u
        \bigl(Z_\infty^{(u)}-1\bigr)
        \mathrm{e}^{-2\pi i\xi t_u}
        \right|
        \longrightarrow0
\]
almost surely on \(\mathcal{Z}_{K_1}\cap\{Z_\infty>0\}\).

\boldparagraph{Level atomization.}

The level-\((n+h)\) approximation is
\[
        \mathrm{d}\mu_{n+h}(t)
        =
        \sum_{|u|=n+h}
        2^{n+h}A_u
        \mathbf{1}_{I_u}(t)
        \,\mathrm{d}t.
\]
On \(\mathcal{Z}_{K_1}\), \(\mu_{n+h}([0,1])=Z_{n+h}\leq K_1\).
The same Lipschitz estimate gives
\[
        \left|
        \sum_{|u|=n+h}
        A_u\mathrm{e}^{-2\pi i\xi t_u}
        -
        \int_0^1
        \mathrm{e}^{-2\pi i\xi t}
        \,\mathrm{d}\mu_{n+h}(t)
        \right|
        \leq
        CK_1 2^{-h}
\]
uniformly over \(2^n\leq|\xi|<2^{n+1}\).

\boldparagraph{Finite-level martingale identity.}

For \(J\in\mathcal{D}_k([0,1])\), define
\[
        c_J(\xi,k-1)
        :=
        2^{k-1}
        \int_J
        \mathrm{e}^{-2\pi i\xi t}
        \,\mathrm{d}t.
\]
Since \(\mu_0\) is Lebesgue measure and
\[
        \mathrm{d}(\mu_k-\mu_{k-1})(t)
        =
        \sum_{J\in\mathcal{D}_k([0,1])}
        2^{k-1}
        A_{J^{-}}
        (W_J-1)
        \mathbf{1}_J(t)
        \,\mathrm{d}t,
\]
we obtain
\[
\begin{aligned}
        \int_0^1
        \mathrm{e}^{-2\pi i\xi t}
        \,\mathrm{d}\mu_{n+h}(t)
        =
        \int_0^1
        \mathrm{e}^{-2\pi i\xi t}
        \,\mathrm{d}t+
        \sum_{k=1}^{n+h}
        \sum_{J\in\mathcal{D}_k([0,1])}
        A_{J^{-}}
        c_J(\xi,k-1)
        (W_J-1).
\end{aligned}
\]
The deterministic Lebesgue term satisfies
\[
        \left|
        \int_0^1
        \mathrm{e}^{-2\pi i\xi t}
        \,\mathrm{d}t
        \right|
        \leq
        C2^{-n}.
\]
Moreover, whenever \(2^n\leq|\xi|<2^{n+1}\),
\[
\begin{aligned}
        |c_J(\xi,k-1)|
        \leq
        C\min
        \left\{
        2^{k-1}|J|,
        2^{k-1}|\xi|^{-1}
        \right\}\leq
        C\min\{1,2^{k-n}\}.
\end{aligned}
\]

Set
\[
        \Gamma_J(\xi)
        :=
        A_{J^{-}}c_J(\xi,k-1),
        \qquad
        J\in\mathcal{D}_k([0,1]).
\]
The coefficient \(\Gamma_J(\xi)\) is \(\mathcal{F}_{k-1}\)-measurable, and
\[
        |\Gamma_J(\xi)|
        \leq
        C A_{J^{-}}\min\{1,2^{k-n}\}.
\]
Since \(A_{J^{-}}W_J=2A_J\), after enlarging \(C\) if necessary,
\[
        |\Gamma_J(\xi)|W_J
        \leq
        C A_J\min\{1,2^{k-n}\}.
\]
Thus Theorem~\ref{T:abstract-stopped-skeleton} applies with
\[
        \mathcal{B}_{\xi,n}:=\{d_0\},
        \qquad
        m_{\xi,d_0}:=n,
        \qquad
        M_0:=1,
        \qquad
        C_0:=0,
\]
after identifying \(\Gamma_J(\xi,d_0)\) with \(\Gamma_J(\xi)\). 
Therefore, for fixed \(K,K_1,\varepsilon,h\),
\[
        \sup_{\xi\in\Lambda_n^{\mathrm{int}}(K,\varepsilon)}
        \left|
        \sum_{k=1}^{n+h}
        \sum_{J\in\mathcal{D}_k([0,1])}
        A_{J^{-}}
        c_J(\xi,k-1)
        (W_J-1)
        \right|
        \longrightarrow0
\]
almost surely on \(\mathcal{Z}_{K_1}\).

\boldparagraph{Conclusion.}

On the simultaneous probability-one event fixed at the beginning of this section, and on
\(
\Omega_K\cap\mathcal{Z}_{K_1},
\)
the following bound holds for every \(n\geq1\):
\[
\begin{aligned}
        \sup_{\xi\in\Lambda_n^{\mathrm{int}}(K,\varepsilon)}
        |\widehat{\mu}(\xi)|
        &\leq
        CK2^{-h}
\\
        &\quad+
        \sup_{\xi\in\Lambda_n^{\mathrm{int}}(K,\varepsilon)}
        \left|
        \sum_{|u|=n+h}
        A_u\bigl(Z_\infty^{(u)}-1\bigr)
        \mathrm{e}^{-2\pi i\xi t_u}
        \right|
\\
        &\quad+
        CK_1 2^{-h}
        +
        C2^{-n}
\\
        &\quad+
        \sup_{\xi\in\Lambda_n^{\mathrm{int}}(K,\varepsilon)}
        \left|
        \sum_{k=1}^{n+h}
        \sum_{J\in\mathcal{D}_k([0,1])}
        A_{J^{-}}c_J(\xi,k-1)(W_J-1)
        \right|.
\end{aligned}
\]
Choose \(h\in\mathbb{N}_0\) so large that
\[
        CK2^{-h}<\frac{\varepsilon}{100},
        \qquad
        CK_1 2^{-h}<\frac{\varepsilon}{100}.
\]
With this deterministic \(h\) fixed, the terminal descendant supremum converges to zero almost surely on
\(\mathcal{Z}_{K_1}\cap\{Z_\infty>0\}\), 
the stopped-skeleton supremum converges to zero almost surely on \(\mathcal{Z}_{K_1}\), 
and \(C2^{-n}\to0\) deterministically.  
Hence, for all sufficiently large \(n\),
\(
   \sup_{\xi\in\Lambda_n^{\mathrm{int}}(K,\varepsilon)}
        |\widehat{\mu}(\xi)|
        <
        \frac{\varepsilon}{4}
\)
on \(\Omega_K\cap\mathcal{Z}_{K_1}\cap\{Z_\infty>0\}\).
The gridding implication therefore rules out \(S_n^{\mathrm{int}}>\varepsilon\) 
for all sufficiently large \(n\) on this localized event.

Since \(Z_n\to Z_\infty<\infty\) almost surely, the sequence \((Z_n)\) is almost surely bounded.  
Define
\[
        \mathcal{L}
        :=
        \bigcup_{K=1}^{\infty}
        \bigcup_{K_1=1}^{\infty}
        \Omega_K\cap\mathcal{Z}_{K_1}.
\]
Then \(\mathbb{P}(\mathcal{L})=1\).
For each \((K,K_1,\varepsilon)\in\mathbb{N}^2\times\mathbb{Q}_{>0}\),
let \(E_{K,K_1,\varepsilon}\) be a probability-one event on which the preceding eventual estimate holds on
\[
        \Omega_K
        \cap
        \mathcal{Z}_{K_1}
        \cap
        \{Z_\infty>0\}.
\]
Since the parameter set is countable, the event
\[
        E
        :=
        \mathcal{L}
        \cap
        \bigcap_{K=1}^{\infty}
        \bigcap_{K_1=1}^{\infty}
        \bigcap_{\varepsilon\in\mathbb{Q}_{>0}}
        E_{K,K_1,\varepsilon}
\]
has probability one. 
If \(\omega\in E\cap\{Z_\infty>0\}\), then \(\omega\in\mathcal{L}\). 
Hence there exist \(K,K_1\in\mathbb{N}\) such that
\(
\omega\in\Omega_K\cap\mathcal{Z}_{K_1}.
\)
For every rational \(\varepsilon>0\), the preceding argument then gives
\(S_n^{\mathrm{int}}(\omega) \leq \varepsilon\)
for all sufficiently large \(n\).  
Hence
\[
        S_n^{\mathrm{int}}(\omega)
        \longrightarrow0.
\]
Thus
\(S_n^{\mathrm{int}}\to0\) almost surely on \(\{Z_\infty>0\}\).  
Equivalently,
\[
        \widehat{\mu}(\xi)
        \longrightarrow0
        \qquad
        (|\xi|\to\infty,\ \xi\in\mathbb{R})
\]
almost surely on non-extinction.
\end{proof}

\section{Endpoint-safe geometry for fixed nondegenerate arcs}\label{S:arc-geometry}

Throughout this section, let \(\gamma:[0,1]\to\mathbb{R}^2\) be a fixed nondegenerate \(C^2\) embedded arc. 
Thus \(\gamma\) is a \(C^2\) embedding and
\[
        \inf_{0\leq t\leq1}|\gamma'(t)|>0,
        \qquad
        \inf_{0\leq t\leq1}|\det(\gamma'(t),\gamma''(t))|>0.
\]
For \(\xi\in\mathbb{R}^2\setminus\{0\}\), set
\[
        \phi_\xi(t)=-2\pi\xi\cdot\gamma(t).
\]

Constants \(c_\gamma,C_\gamma\) may depend on \(\gamma\) and on the cutoff choices made below, 
but are independent of \(n,h,k,\ell,\xi,d\) and of the dyadic interval under consideration.
Constants \(C_{\gamma,j}\) may also depend on \(j\).

This section establishes the endpoint-safe fixed-cutoff package used in Section~\ref{S:arc-assembly}. 
Its deterministic part consists of the phase decomposition, 
the safe-support geometry, the surviving-band bounds, and the phase-bin coefficient estimates.
The remaining input is a weak-transfer estimate for the moving safe region. 
More precisely, we construct the partition, control the moving safe region
by a support estimate and weak transfer, bound the number of surviving bands and the shift of their phase scales, 
and prove the coefficient estimates and deterministic forcing bound required in the annular assembly. 
The endpoint pieces will always be placed inside the safe region.  
They will not be treated by integration by parts.

\subsection{Tangent angle and derivative sublevel geometry}\label{SS:arc-geometry-tangent-angle}

\begin{lemma}\label{L:arc-tangent-angle}
There exists a \(C^1\) function
\(\Theta_\gamma:[0,1]\to\mathbb{R}\) such that
\[
        \gamma'(t)
        =
        |\gamma'(t)|
        \bigl(\cos\Theta_\gamma(t),\sin\Theta_\gamma(t)\bigr),
        \qquad
        \Theta_\gamma'(t)
        =
        \frac{\det(\gamma'(t),\gamma''(t))}
        {|\gamma'(t)|^2}.
\]
There are constants \(0<c_\gamma\leq C_\gamma<\infty\) such that
\[
        c_\gamma\leq|\Theta_\gamma'(t)|\leq C_\gamma
        \qquad(0\leq t\leq1).
\]
Consequently, \(\Theta_\gamma\) is strictly monotone and bi-Lipschitz onto its image.
\end{lemma}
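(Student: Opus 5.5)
The plan is to construct \(\Theta_\gamma\) as the continuous lift of the unit tangent, using the path-lifting property of \(\mathbb{R}\to\mathbb S^1\); the derivative formula then follows by differentiating. Since \(\gamma\in C^2\) and \(\inf|\gamma'|>0\), the map \(T(t):=\gamma'(t)/|\gamma'(t)|\) is \(C^1\) from \([0,1]\) into \(\mathbb S^1\). Fix \(\Theta_0\) with \(T(0)=(\cos\Theta_0,\sin\Theta_0)\), and define
\[
\Theta_\gamma(t):=\Theta_0+\int_0^t \det\bigl(T(s),T'(s)\bigr)\,\mathrm ds .
\]
This integral is the standard explicit lift, and defining it this way avoids any appeal to abstract covering theory. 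The integrand is continuous, so \(\Theta_\gamma\) is \(C^1\).

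The first step is to verify that \(\Theta_\gamma\) really is a lift of \(T\). Set \(E(t):=(\cos\Theta_\gamma(t),\sin\Theta_\gamma(t))\) and consider the complex quantity \(T(t)\overline{E(t)}\), identifying \(\mathbb{R}^2\) with \(\mathbb C\). Since \(|T|=1\), we have \(T'=\mathrm i\,\det(T,T')\,T\). Also \(E'=\mathrm i\,\Theta_\gamma' E\), and \(\Theta_\gamma'=\det(T,T')\) by construction. Differentiating the product therefore gives \((T\overline E)'=0\). At \(t=0\) the product equals \(1\), so it equals \(1\) everywhere, which means \(T=E\) on \([0,1]\). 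This yields the first identity, \(\gamma'=|\gamma'|(\cos\Theta_\gamma,\sin\Theta_\gamma)\).

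For the derivative formula, I would write \(\gamma'=|\gamma'|T\). Differentiating gives \(\gamma''=|\gamma'|'T+|\gamma'|T'\). Taking the determinant with \(\gamma'\), the term \(\det(\gamma',|\gamma'|'T)\) vanishes because it is a determinant of parallel vectors, so
\[
\det(\gamma',\gamma'')=|\gamma'|^2\det(T,T').
\]
Because \(|\gamma'|\) is \(C^1\) wherever \(\gamma'\neq0\), this computation is legitimate, and it gives \(\Theta_\gamma'=\det(\gamma',\gamma'')/|\gamma'|^2\).

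The bounds then follow directly. The upper bound \(C_\gamma\) holds because \(|\gamma''|\) is bounded on the compact interval and \(|\gamma'|\geq c>0\). The lower bound \(c_\gamma\) holds because \(|\det(\gamma',\gamma'')|\geq c'>0\) and \(|\gamma'|\) is bounded above by continuity. Since \(\Theta_\gamma'\) is continuous and never vanishes, it has constant sign on \([0,1]\) by the intermediate value theorem, so \(\Theta_\gamma\) is strictly monotone. The mean value theorem then gives
\[
c_\gamma|t-s|\leq|\Theta_\gamma(t)-\Theta_\gamma(s)|\leq C_\gamma|t-s|,
\]
which is the bi-Lipschitz property onto the image interval. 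The only point needing care is the lifting step; the differentiation of \(T\overline E\) handles it, and I expect no real obstacle elsewhere.
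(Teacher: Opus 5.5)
Your proof is correct and follows the same route as the paper: lift the unit tangent, differentiate $\gamma'=|\gamma'|T$ and take the determinant with $\gamma'$, then read off the two-sided bounds, the constant sign, and the bi-Lipschitz property from the mean value theorem. The only difference is that you construct the lift explicitly as $\Theta_0+\int_0^t\det(T,T')\,\mathrm{d}s$ and verify it via $(T\overline{E})'=0$, whereas the paper simply invokes the lifting property on the simply connected interval; your version has the small advantage of making the $C^1$ regularity of the lift immediate.
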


\begin{proof}
Since \(\gamma'\) is \(C^1\) and nonvanishing, 
the unit tangent \(T(t)=\frac{\gamma'(t)}{|\gamma'(t)|}\) is a \(C^1\) map from \([0,1]\) to \(\mathbb S^1\). 
Since \([0,1]\) is simply connected, \(T\) admits a \(C^1\) lift \(\Theta_\gamma:[0,1]\to\mathbb R\), so that
\[
        \gamma'(t)
        =
        |\gamma'(t)|(\cos\Theta_\gamma(t),\sin\Theta_\gamma(t)).
\]
Differentiating the tangent-angle representation and taking the determinant with \(\gamma'(t)\) gives
\[
        \det(\gamma'(t),\gamma''(t))
        =
        |\gamma'(t)|^2\Theta_\gamma'(t).
\]
Therefore
\[
        \Theta_\gamma'(t)
        =
        \frac{\det(\gamma'(t),\gamma''(t))}{|\gamma'(t)|^2}.
\]

Since \(\det(\gamma',\gamma'')\) is continuous and nonvanishing on \([0,1]\), 
it has constant sign and is bounded away from zero.  
Since \(|\gamma'|\) is bounded above and below by positive constants, the displayed formula yields
\[
        c_\gamma
        \leq
        |\Theta_\gamma'(t)|
        \leq
        C_\gamma
        \qquad(t\in[0,1]).
\]
Thus \(\Theta_\gamma\) is strictly monotone, and the same two-sided estimate gives the bi-Lipschitz property.
\end{proof}

For \(\xi\in\mathbb{R}^2\setminus\{0\}\), choose \(\alpha_\xi\in\mathbb{R}\) such that
\(\xi=|\xi|(\cos\alpha_\xi,\sin\alpha_\xi)\), and define
\[
        h_\xi(t)^{1/2}
        =
        \frac{|\xi\cdot\gamma'(t)|}
        {|\xi|\,|\gamma'(t)|}
        =
        \bigl|\cos(\Theta_\gamma(t)-\alpha_\xi)\bigr|.
\]

\begin{lemma}\label{L:arc-derivative-sublevel}
For every \(\xi\neq0\) and \(t\in[0,1]\),
\[
\begin{aligned}
        c_\gamma|\xi|h_\xi(t)^{1/2}
        &\leq
        |\phi_\xi'(t)|
        \leq
        C_\gamma|\xi|h_\xi(t)^{1/2},
\\
        |h_\xi'(t)|
        &\leq
        C_\gamma h_\xi(t)^{1/2}.
\end{aligned}
\]
Moreover, for every \(0<\rho\leq1\), the sublevel set
\[
\{t\in[0,1]:h_\xi(t)^{1/2}\leq\rho\}
\]
is contained in the union of at most \(C_\gamma\) intervals, each of length at most \(C_\gamma\rho\).
\end{lemma}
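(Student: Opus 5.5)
The plan is to reduce everything to the tangent-angle representation of Lemma~\ref{L:arc-tangent-angle}. Since \(\phi_\xi'(t)=-2\pi\xi\cdot\gamma'(t)\) and \(\gamma'(t)=|\gamma'(t)|(\cos\Theta_\gamma(t),\sin\Theta_\gamma(t))\), we have
\[
        \xi\cdot\gamma'(t)
        =
        |\xi|\,|\gamma'(t)|\cos(\Theta_\gamma(t)-\alpha_\xi),
\]
which gives the identity \(h_\xi(t)^{1/2}=|\cos(\Theta_\gamma(t)-\alpha_\xi)|\) defining \(h_\xi\). Hence \(|\phi_\xi'(t)|=2\pi|\xi|\,|\gamma'(t)|\,h_\xi(t)^{1/2}\). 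The two-sided bound then follows from \(0<\inf|\gamma'|\leq\sup|\gamma'|<\infty\), which holds by continuity on the compact interval.

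For the derivative bound, write \(h_\xi(t)=\cos^2(\Theta_\gamma(t)-\alpha_\xi)\). Differentiating gives
\[
        h_\xi'(t)=-2\cos(\Theta_\gamma(t)-\alpha_\xi)\sin(\Theta_\gamma(t)-\alpha_\xi)\Theta_\gamma'(t),
\]
so \(|h_\xi'|\leq2C_\gamma h_\xi^{1/2}\) by the upper bound on \(|\Theta_\gamma'|\) from Lemma~\ref{L:arc-tangent-angle}. This is routine, and differentiability is unproblematic because \(h_\xi\) is a \(C^1\) function of \(\Theta_\gamma\).

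For the sublevel geometry, work in the angle variable \(s=\Theta_\gamma(t)\). By Lemma~\ref{L:arc-tangent-angle}, \(\Theta_\gamma\) is strictly monotone with \(c_\gamma\leq|\Theta_\gamma'|\leq C_\gamma\). Its image is therefore an interval \(I_\Theta\) of length at most \(C_\gamma\), and \(\Theta_\gamma^{-1}\) is Lipschitz with constant \(c_\gamma^{-1}\). The set \(\{s:|\cos(s-\alpha_\xi)|\leq\rho\}\) is the union of the intervals \(|s-\alpha_\xi-\pi/2-k\pi|\leq\arcsin\rho\), \(k\in\mathbb Z\). Since \(\arcsin\rho\leq\pi\rho/2\) for \(0\leq\rho\leq1\), each such interval has length at most \(\pi\rho\). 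Only those centered within distance \(\pi/2\) of \(I_\Theta\) can meet \(I_\Theta\), and there are at most \(|I_\Theta|/\pi+2\leq C_\gamma\) of them, a bound independent of \(\xi\). Pulling these back by \(\Theta_\gamma^{-1}\) and intersecting with \([0,1]\) gives at most \(C_\gamma\) intervals, each of length at most \(c_\gamma^{-1}\pi\rho\). This is the claimed sublevel bound, uniform in the direction \(\alpha_\xi\).

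There is no serious obstacle here. The one point needing care is that the count and lengths are uniform in \(\xi\), including when a sublevel interval meets an endpoint of \([0,1]\). This is handled by the fact that truncation by \([0,1]\), or by the image interval, only shortens intervals and never increases their number. This uniformity is exactly what the later endpoint-safe decomposition relies on.
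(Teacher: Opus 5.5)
Your proposal is correct and follows essentially the same route as the paper. Both arguments use the identity \(|\phi_\xi'(t)|=2\pi|\xi|\,|\gamma'(t)|\,h_\xi(t)^{1/2}\), differentiate \(\cos^2(\Theta_\gamma-\alpha_\xi)\) directly, and pull back the \(\rho\)-sublevel set of \(|\cos|\) on the bounded angle interval through the bi-Lipschitz map \(\Theta_\gamma\); your bound \(\arcsin\rho\le\pi\rho/2\) is the paper's inequality \(\operatorname{dist}(u,\pi/2+\pi\mathbb Z)\le(\pi/2)|\cos u|\) in another form, and your count \(|I_\Theta|/\pi+2\) just makes the paper's ``at most \(C_\gamma\) intervals'' explicit.
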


\begin{proof}
By definition,
\[
        |\phi_\xi'(t)|
        =
        2\pi|\xi\cdot\gamma'(t)|
        =
        2\pi|\xi|\,|\gamma'(t)|h_\xi(t)^{1/2}.
\]
The first estimate follows from the bounds on \(|\gamma'|\).
Differentiating the angular representation of \(h_\xi\) gives
\[
        h_\xi'(t)
        =
        -2\cos\bigl(\Theta_\gamma(t)-\alpha_\xi\bigr)
        \sin\bigl(\Theta_\gamma(t)-\alpha_\xi\bigr)
        \Theta_\gamma'(t).
\]
The bound on \(\Theta_\gamma'\) therefore yields
\[
        |h_\xi'(t)|
        \leq
        C_\gamma h_\xi(t)^{1/2}.
\]

For the sublevel estimate, set \(I_\xi=\Theta_\gamma([0,1])-\alpha_\xi\).
The interval \(I_\xi\) has length at most \(C_\gamma\).  
Since \(\operatorname{dist}(u,\pi/2+\pi\mathbb Z) \leq (\pi/2)|\cos u|\),
the set
\[
        \{u\in I_\xi:|\cos u|\leq\rho\}
\]
is contained in the union of at most \(C_\gamma\) intervals, each of length at most \(C\rho\).  
The bi-Lipschitz estimate for \(\Theta_\gamma\) gives the stated cover after pulling these intervals back to \([0,1]\).
\end{proof}

\subsection{Endpoint-safe partition}\label{SS:arc-geometry-partition}

We first define the decomposition for \(|\xi|\geq16\); 
the bounded-frequency case is specified below.  
Fix a nondecreasing \(\vartheta\in C^\infty(\mathbb R)\) such that
\(0\leq\vartheta\leq1\), \(\vartheta=0\) on \((-\infty,1]\), and \(\vartheta=1\) on \([2,\infty)\).  
Define
\[
        \eta_{\xi,0}(t)
        =
        \vartheta\bigl(|\xi|^{1/2}t\bigr),
        \qquad
        \eta_{\xi,1}(t)
        =
        \vartheta\bigl(|\xi|^{1/2}(1-t)\bigr).
\]
Set
\[
        \eta_\xi(t)
        =
        \eta_{\xi,0}(t)\eta_{\xi,1}(t).
\]
Then \(\|\eta_\xi'\|_{L^1([0,1])} \leq C\), where \(C\) depends only on \(\vartheta\).  
Define the endpoint-safe pieces by
\[
        \chi_{\xi,0}(t)
        =
        1-\eta_{\xi,0}(t),
        \qquad
        \chi_{\xi,1}(t)
        =
        \eta_{\xi,0}(t)
        \bigl(1-\eta_{\xi,1}(t)\bigr).
\]

Fix smooth functions
\(\psi,\zeta:[0,\infty)\to[0,1]\) such that
\[
        \psi\equiv1\ \text{on }[0,1],
        \qquad
        \operatorname{spt}\psi\subset[0,4],
        \qquad
        \operatorname{spt}\zeta\subset[1/4,4],
\]
and
\[
        \sum_{d\in2^{\mathbb{Z}}}
        \zeta\left(\frac{u}{d^2}\right)
        =
        1
        \qquad(u>0).
\]
Define the small-derivative and derivative-band cutoffs by
\[
\begin{aligned}
        \chi_{\xi,\mathrm{sd}}(t)
        &=
        \eta_\xi(t)\psi\bigl(|\xi|h_\xi(t)\bigr),
\\
        \chi_{\xi,d}(t)
        &=
        \eta_\xi(t)
        \bigl(1-\psi(|\xi|h_\xi(t))\bigr)
        \zeta\left(\frac{h_\xi(t)}{d^2}\right),
        \qquad d\in2^{\mathbb Z}.
\end{aligned}
\]
Let
\[
        \mathfrak D_{\xi}
        =
        \{d\in2^{\mathbb Z}:\chi_{\xi,d}\not\equiv0\}.
\]

For \(0<|\xi|<16\), set
\[
        \chi_{\xi,\mathrm{sd}}\equiv1,
        \qquad
        \chi_{\xi,0}\equiv0,
        \qquad
        \chi_{\xi,1}\equiv0,
        \qquad
        \mathfrak D_{\xi}=\emptyset.
\]

\begin{proposition}[Endpoint-safe partition]\label{P:arc-endpoint-safe-partition}
For every \(\xi\neq0\),
\[
        \chi_{\xi,0}(t)
        +
        \chi_{\xi,1}(t)
        +
        \chi_{\xi,\mathrm{sd}}(t)
        +
        \sum_{d\in \mathfrak D_{\xi}}\chi_{\xi,d}(t)
        =
        1
        \qquad(0\leq t\leq1).
\]
If \(|\xi|\geq16\), then
\[
\operatorname{spt}\chi_{\xi,0}\subset[0,2|\xi|^{-1/2}], \qquad       
\operatorname{spt}\chi_{\xi,1}\subset[1-2|\xi|^{-1/2},1],
\]
and
\[
        \operatorname{spt}\chi_{\xi,\mathrm{sd}}
        \subset
        \{t\in[0,1]:h_\xi(t)^{1/2}\leq2|\xi|^{-1/2}\}.
\]
For \(d\in \mathfrak D_{\xi}\),
\[
        \frac{d}{2}
        \leq
        h_\xi(t)^{1/2}
        \leq
        2d
        \qquad
        (t\in\operatorname{spt}\chi_{\xi,d}),
\]
and
\[
        c_\gamma|\xi|d
        \leq
        |\phi_\xi'(t)|
        \leq
        C_\gamma|\xi|d
        \qquad(t\in\operatorname{spt}\chi_{\xi,d}).
\]
Moreover,
\[
        \frac12|\xi|^{-1/2}
        \leq
        d
        \leq
        2
        \qquad(d\in \mathfrak D_{\xi}).
\]
In particular, \(\mathfrak D_{\xi}\) is finite.
\end{proposition}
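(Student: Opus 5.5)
The proof is a direct verification from the definitions. We treat the bounded-frequency case separately, and for \(|\xi|\geq16\) we check the partition of unity and the support bounds one cutoff at a time.

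\textbf{Bounded frequencies.} For \(0<|\xi|<16\) the identity reads \(1=1\). There is nothing else to prove, since all remaining assertions are stated only for \(|\xi|\geq16\).

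\textbf{Partition of unity for \(|\xi|\geq16\).} Telescoping gives
\[
\chi_{\xi,0}+\chi_{\xi,1}=1-\eta_{\xi,0}\eta_{\xi,1}=1-\eta_\xi .
\]
It remains to show \(\chi_{\xi,\mathrm{sd}}+\sum_{d}\chi_{\xi,d}=\eta_\xi\). Where \(h_\xi(t)>0\), the dyadic identity \(\sum_{d\in2^{\mathbb Z}}\zeta(h_\xi/d^2)=1\) gives
\[
\eta_\xi\psi+\eta_\xi(1-\psi)\cdot1=\eta_\xi .
\]
Where \(h_\xi(t)=0\), we have \(\psi(0)=1\), so the factor \(1-\psi\) vanishes and every band term is zero; the identity again reduces to \(\eta_\xi\). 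The sum over \(2^{\mathbb Z}\) is locally finite because \(\operatorname{spt}\zeta\subset[1/4,4]\). Bands that vanish identically contribute nothing, so restricting the sum to \(\mathfrak D_\xi\) is harmless.

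\textbf{Support statements.}
\begin{itemize}
\item Endpoint pieces. \(\chi_{\xi,0}\neq0\) forces \(|\xi|^{1/2}t<2\), since \(\vartheta=1\) on \([2,\infty)\). Likewise \(\chi_{\xi,1}\neq0\) forces \(|\xi|^{1/2}(1-t)<2\). Taking closures gives the stated supports.
\item Small-derivative piece. \(\psi(|\xi|h_\xi)\neq0\) forces \(|\xi|h_\xi\leq4\), that is, \(h_\xi^{1/2}\leq2|\xi|^{-1/2}\).
\item Derivative bands. On \(\operatorname{spt}\chi_{\xi,d}\) we have \(h_\xi/d^2\in[1/4,4]\), so \(d/2\leq h_\xi^{1/2}\leq2d\). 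Lemma~\ref{L:arc-derivative-sublevel} then gives \(|\phi_\xi'|\asymp|\xi|h_\xi^{1/2}\asymp|\xi|d\), with constants adjusted by a factor of \(2\).
\end{itemize}

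\textbf{Range of \(d\).} Since \(\psi\equiv1\) on \([0,1]\), nonvanishing of \(1-\psi\) forces \(|\xi|h_\xi>1\), so \(h_\xi^{1/2}>|\xi|^{-1/2}\). Combined with \(h_\xi^{1/2}\leq2d\), this gives \(d\geq\tfrac12|\xi|^{-1/2}\). In the other direction, \(h_\xi^{1/2}=|\cos(\cdot)|\leq1\) together with \(d/2\leq h_\xi^{1/2}\) gives \(d\leq2\). Only finitely many dyadic \(d\) lie in \([\tfrac12|\xi|^{-1/2},2]\), so \(\mathfrak D_\xi\) is finite.

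Every step is elementary. The only points needing care are the zero set of \(h_\xi\), handled in the partition-of-unity step, and the closure convention for supports, which is absorbed by the non-strict inequalities.
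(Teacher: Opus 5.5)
Your proposal is correct and follows the paper's proof essentially step by step. It uses the same identity \(\chi_{\xi,0}+\chi_{\xi,1}=1-\eta_\xi\), the same case split on whether \(h_\xi(t)=0\) for the dyadic partition of unity, and the same use of \(\psi\equiv1\) on \([0,1]\) together with \(h_\xi\leq1\) to place \(d\) in \([\tfrac12|\xi|^{-1/2},2]\). One small precision: for \(0<|\xi|<16\), the band assertions are not excluded by a hypothesis \(|\xi|\geq16\); they hold vacuously because \(\mathfrak D_\xi=\emptyset\) by convention.
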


\begin{proof}
The case \(0<|\xi|<16\) follows from the convention above, so assume \(|\xi|\geq16\). 
By definition, \(\chi_{\xi,0}+\chi_{\xi,1}=1-\eta_\xi\).
It remains to check the partition on the support of \(\eta_\xi\). 
If \(h_\xi(t)=0\), then \(\chi_{\xi,\mathrm{sd}}(t)=\eta_\xi(t)\) and
\(\chi_{\xi,d}(t)=0\) for every \(d\in2^{\mathbb Z}\). 
If \(h_\xi(t)>0\), then the dyadic partition of unity gives
\[
        \sum_{d\in2^{\mathbb{Z}}}
        \zeta\left(\frac{h_\xi(t)}{d^2}\right)=1.
\]
Thus
\[
        \chi_{\xi,\mathrm{sd}}(t)
        +
        \sum_{d\in \mathfrak D_{\xi}}\chi_{\xi,d}(t)
        =
        \eta_\xi(t).
\]
This proves the partition of unity.

The endpoint support bounds follow from the definitions of \(\eta_{\xi,0}\) and \(\eta_{\xi,1}\).  
Since \(\operatorname{spt}\psi\subset[0,4]\),
\[
        \operatorname{spt}\chi_{\xi,\mathrm{sd}}
        \subset
        \{t\in[0,1]:
        |\xi|h_\xi(t)\leq4\},
\]
which gives the stated small-derivative bound.

If \(t\in\operatorname{spt}\chi_{\xi,d}\), then
\[
        \frac{h_\xi(t)}{d^2}
        \in
        \operatorname{spt}\zeta
        \subset
        [1/4,4].
\]
Hence
\[
        \frac d2
        \leq
        h_\xi(t)^{1/2}
        \leq
        2d.
\]
The phase-derivative estimate now follows from Lemma~\ref{L:arc-derivative-sublevel}.

Finally, let \(d\in \mathfrak D_{\xi}\) and choose \(t\) with \(\chi_{\xi,d}(t)\neq0\).  
The band localization and \(h_\xi(t)\leq1\) give \(d\leq2\).  
Moreover, \(1-\psi(|\xi|h_\xi(t))\neq0\), and hence \(|\xi|h_\xi(t)>1\).  
Since \(h_\xi(t)^{1/2}\leq2d\),
\[
d \geq \frac12|\xi|^{-1/2}.
\]
Thus \(\mathfrak D_{\xi}\) is finite.
\end{proof}

\subsection{Fixed cutoff, safe support, and surviving oscillatory bands}\label{SS:arc-geometry-fixed-cutoff}

For \(\xi\in\mathbb{R}^2\setminus\{0\}\) and \(d\in \mathfrak D_\xi\), 
define the phase-bin scale \(m_{\xi,d}\in\mathbb{Z}\) by \(2^{m_{\xi,d}-1}<|\xi|d\leq2^{m_{\xi,d}}\).

Fix an integer \(j\geq1\).
Define
\[
        \mathfrak D_{\xi,j}^{\mathrm{small}}
        =
        \{d\in \mathfrak D_{\xi}:d<2^{-j}\},
        \qquad
        \mathfrak D_{\xi,j}^{\mathrm{osc}}
        =
        \{d\in \mathfrak D_{\xi}:d\geq2^{-j}\}.
\]
The fixed-cutoff safe function is
\[
        \Psi_{\xi,j}^{\mathrm{safe}}(t)
        =
        \chi_{\xi,0}(t)
        +
        \chi_{\xi,1}(t)
        +
        \chi_{\xi,\mathrm{sd}}(t)
        +
        \sum_{d\in \mathfrak D_{\xi,j}^{\mathrm{small}}}\chi_{\xi,d}(t).
\]

\begin{proposition}\label{P:arc-safe-support-and-bands}
Fix integers \(j,n\geq1\) and \(\xi\in\mathbb{R}^2\setminus\{0\}\) such that \(2^n\leq|\xi|<2^{n+1}\). 
Then \(\operatorname{spt}\Psi_{\xi,j}^{\mathrm{safe}}\) is contained in the union of at most \(C_\gamma\) intervals, 
each of length at most
\[
C_\gamma(2^{-j}+2^{-n/2}).
\]
Moreover,
\[
        \#\mathfrak D_{\xi,j}^{\mathrm{osc}} \leq j+2,
        \qquad
        |m_{\xi,d}-n| \leq \max\{j,2\} \quad (d\in \mathfrak D_{\xi,j}^{\mathrm{osc}}).
\]
\end{proposition}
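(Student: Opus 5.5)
The plan is to treat the three assertions separately, using Proposition~\ref{P:arc-endpoint-safe-partition} for the pointwise support information and Lemma~\ref{L:arc-derivative-sublevel} for the covering. Since \(2^n\leq|\xi|\), we have \(|\xi|\geq16\) once \(n\geq4\). For the finitely many \(n\leq3\), the claims are trivial after enlarging \(C_\gamma\): the safe support is contained in \([0,1]\), which is a single interval of length at most \(C_\gamma 2^{-n/2}\). Moreover, \(\mathfrak D_\xi\) is either empty or obeys the bounds checked below. So I will assume \(|\xi|\geq16\).

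\textbf{Safe support.} The safe function is a sum of four kinds of pieces, and I cover each one.

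The endpoint pieces \(\chi_{\xi,0}\) and \(\chi_{\xi,1}\) are supported in two intervals of length \(2|\xi|^{-1/2}\leq 2\cdot2^{-n/2}\).

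The small-derivative piece \(\chi_{\xi,\mathrm{sd}}\) is supported in \(\{h_\xi^{1/2}\leq2|\xi|^{-1/2}\}\). Applying Lemma~\ref{L:arc-derivative-sublevel} with \(\rho=\min\{1,2|\xi|^{-1/2}\}\), this set is covered by at most \(C_\gamma\) intervals of length at most \(C_\gamma 2^{-n/2}\).

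For \(d\in\mathfrak D_{\xi,j}^{\mathrm{small}}\), the support of \(\chi_{\xi,d}\) lies in \(\{h_\xi^{1/2}\leq 2d\}\subset\{h_\xi^{1/2}\leq 2^{1-j}\}\). The key point is to cover the union over all such \(d\) at once, rather than band by band, so that the number of intervals does not grow with the number of small bands. Lemma~\ref{L:arc-derivative-sublevel} with \(\rho=2^{1-j}\leq1\) gives at most \(C_\gamma\) intervals of length at most \(C_\gamma 2^{-j}\).

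Altogether this gives at most \(2+2C_\gamma\) intervals, each of length at most \(C_\gamma(2^{-j}+2^{-n/2})\).

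\textbf{Band count.} Each \(d\in\mathfrak D_{\xi,j}^{\mathrm{osc}}\) is a dyadic number with \(2^{-j}\leq d\leq2\), by Proposition~\ref{P:arc-endpoint-safe-partition}. There are exactly \(j+2\) such values, namely \(d=2^{-j},\dots,2^{1}\).

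\textbf{Phase scale.} By definition, \(2^{m_{\xi,d}-1}<|\xi|d\leq2^{m_{\xi,d}}\). With \(2^n\leq|\xi|<2^{n+1}\) and \(2^{-j}\leq d\leq 2\), we get
\[
2^{n-j}\leq|\xi|d<2^{n+2}.
\]
The left inequality gives \(2^{m_{\xi,d}}\geq 2^{n-j}\), so \(m_{\xi,d}\geq n-j\). The right inequality gives \(2^{m_{\xi,d}-1}<2^{n+2}\), so \(m_{\xi,d}\leq n+2\). Hence \(|m_{\xi,d}-n|\leq\max\{j,2\}\).

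\textbf{Main obstacle.} The only delicate point is the safe-support count: bounding the small bands individually would give on the order of \(n\) intervals. Enclosing all small bands in the single sublevel set \(\{h_\xi^{1/2}\leq 2^{1-j}\}\) avoids this loss. One must also remember that the constant \(C_\gamma\) in Lemma~\ref{L:arc-derivative-sublevel} is uniform in the direction \(\alpha_\xi\), which is what makes the bound hold uniformly over the annulus.
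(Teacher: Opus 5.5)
Your proof is correct and follows essentially the same route as the paper. You reduce to \(|\xi|\geq16\), cover the endpoint pieces directly, cover the small-derivative piece and all small bands at once via the sublevel-set lemma, and read off \(\#\mathfrak D_{\xi,j}^{\mathrm{osc}}\leq j+2\) and \(n-j\leq m_{\xi,d}\leq n+2\) from \(2^{-j}\leq d\leq2\). The only cosmetic difference is that you cover \(\{h_\xi^{1/2}\leq2|\xi|^{-1/2}\}\) and \(\{h_\xi^{1/2}\leq2^{1-j}\}\) separately, each with \(\rho\leq1\), whereas the paper merges them into one sublevel set of threshold \(2(2^{-j}+|\xi|^{-1/2})\) and uses the trivial cover by \([0,1]\) when that threshold exceeds \(1\).
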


\begin{proof}
If \(|\xi|<16\), then \(n\leq3\), \(\mathfrak D_{\xi}=\emptyset\), and \(\Psi_{\xi,j}^{\mathrm{safe}}\equiv1\).  
The conclusions follow after enlarging \(C_\gamma\), so we may assume \(|\xi|\geq16\). 

By Proposition~\ref{P:arc-endpoint-safe-partition},
\[
\begin{aligned}
        \operatorname{spt}\chi_{\xi,0}
        \cup
        \operatorname{spt}\chi_{\xi,1}
        \cup
        \operatorname{spt}\chi_{\xi,\mathrm{sd}}
        &\subset
        [0,2|\xi|^{-1/2}]
        \cup
        [1-2|\xi|^{-1/2},1]
\\
        &\quad\cup
        \{t\in[0,1]:
        h_\xi(t)^{1/2}\leq2|\xi|^{-1/2}\}.
\end{aligned}
\]
For \(d\in \mathfrak D_{\xi,j}^{\mathrm{small}}\),  Proposition~\ref{P:arc-endpoint-safe-partition} gives
\[
        \operatorname{spt}\chi_{\xi,d}
        \subset
        \{t\in[0,1]:h_\xi(t)^{1/2}\leq2d\}
        \subset
        \{t\in[0,1]:h_\xi(t)^{1/2}\leq2^{1-j}\}.
\]
Consequently,
\[
\begin{aligned}
        \operatorname{spt}\Psi_{\xi,j}^{\mathrm{safe}}
        &\subset
        [0,2|\xi|^{-1/2}]
        \cup
        [1-2|\xi|^{-1/2},1]
\\
        &\quad\cup
        \left\{
        t\in[0,1]:
        h_\xi(t)^{1/2}
        \leq
        2\bigl(2^{-j}+|\xi|^{-1/2}\bigr)
        \right\}.
\end{aligned}
\]
By Lemma~\ref{L:arc-derivative-sublevel}, the last set is contained in at most \(C_\gamma\) intervals, 
each of length at most
\[
        C_\gamma(2^{-j}+|\xi|^{-1/2}).
\]
If \(2(2^{-j}+|\xi|^{-1/2})\leq1\), Lemma~\ref{L:arc-derivative-sublevel} gives the required interval cover.
If this quantity exceeds \(1\), the trivial cover by \([0,1]\) gives the same conclusion after enlarging \(C_\gamma\).
Since \(|\xi|^{-1/2}\leq2^{-n/2}\), the asserted safe-support estimate follows.

For \(d\in \mathfrak D_{\xi,j}^{\mathrm{osc}}\), \(2^{-j}\leq d\leq2\).
There are therefore at most \(j+2\) possible dyadic values of \(d\).
Moreover,
\[
2^{n-j} \leq |\xi|d < 2^{n+2}.
\]
By the definition of \(m_{\xi,d}\), \(n-j \leq m_{\xi,d} \leq n+2\),
and hence
\[
|m_{\xi,d}-n| \leq \max\{j,2\}.
\]
\end{proof}

\subsection{Coefficient estimates and deterministic forcing}\label{SS:arc-geometry-coefficients}

For \(d\in \mathfrak D_{\xi}\), an integer \(\ell\geq0\), and \(J\in\mathcal{D}_{\ell+1}([0,1])\), define
\[
        c_J(\xi,d,\ell)
        =
        2^\ell
        \int_J
        \mathrm{e}^{i\phi_\xi(t)}
        \chi_{\xi,d}(t)
        \,\mathrm{d}t.
\]

\begin{lemma}\label{L:arc-variation-estimate}
For every \(\xi\neq0\) and \(d\in \mathfrak D_{\xi}\),
\[
        \int_{\operatorname{spt}\chi_{\xi,d}}
        \frac{|\chi_{\xi,d}'(t)|}{|\phi_\xi'(t)|}
        \,\mathrm{d}t
        \leq
        C_\gamma(|\xi|d)^{-1}.
\]
\end{lemma}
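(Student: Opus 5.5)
On \(\operatorname{spt}\chi_{\xi,d}\), Proposition~\ref{P:arc-endpoint-safe-partition} gives \(|\phi_\xi'(t)|\geq c_\gamma|\xi|d\). The plan is therefore to reduce the claim to the total-variation bound
\[
\int_0^1|\chi_{\xi,d}'(t)|\,\mathrm dt\leq C_\gamma .
\]
We may assume \(|\xi|\geq16\), since otherwise \(\mathfrak D_\xi=\emptyset\). Write \(\chi_{\xi,d}=\eta_\xi\cdot g_1\cdot g_2\), where
\[
g_1=1-\psi(|\xi|h_\xi),\qquad g_2=\zeta(h_\xi/d^2).
\]
All three factors take values in \([0,1]\). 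The product rule then gives
\[
|\chi_{\xi,d}'|\leq|\eta_\xi'|+|g_1'|\mathbf 1_{\operatorname{spt}\chi_{\xi,d}}+|g_2'|\mathbf 1_{\operatorname{spt}\chi_{\xi,d}},
\]
where in the last two terms we may restrict to the support, since the other factors vanish off it. The first term contributes at most \(\|\eta_\xi'\|_{L^1}\leq C\), as noted in Subsection~\ref{SS:arc-geometry-partition}.

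For the band factor we have \(g_2'=\zeta'(h_\xi/d^2)\,h_\xi'/d^2\). On the support, \(h_\xi^{1/2}\leq 2d\), so Lemma~\ref{L:arc-derivative-sublevel} gives \(|h_\xi'|\leq C_\gamma h_\xi^{1/2}\leq C_\gamma d\). Hence \(|g_2'|\leq C_\gamma/d\) on \(\operatorname{spt}\chi_{\xi,d}\). This support lies in \(\{h_\xi^{1/2}\leq 2d\}\), which by the sublevel part of Lemma~\ref{L:arc-derivative-sublevel} is covered by \(C_\gamma\) intervals of length \(C_\gamma d\). (If \(2d>1\), use the trivial cover by \([0,1]\) together with \(d\leq2\).) Integrating over this cover gives a contribution at most \(C_\gamma\).

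For the small-derivative factor we have \(g_1'=-\psi'(|\xi|h_\xi)\,|\xi|h_\xi'\). This is nonzero only where \(1\leq|\xi|h_\xi\leq4\), that is, where \(h_\xi^{1/2}\leq 2|\xi|^{-1/2}\). On that set,
\[
|g_1'|\leq C|\xi|\,C_\gamma h_\xi^{1/2}\leq C_\gamma|\xi|^{1/2},
\]
and the set is covered by \(C_\gamma\) intervals of length \(C_\gamma|\xi|^{-1/2}\). The contribution is therefore again \(O_\gamma(1)\).

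Combining the three contributions gives \(\|\chi_{\xi,d}'\|_{L^1}\leq C_\gamma\). Dividing by \(|\phi_\xi'|\geq c_\gamma|\xi|d\) yields the claimed bound \(C_\gamma(|\xi|d)^{-1}\).

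The main point to handle carefully is the interaction of \(|h_\xi'|\leq C_\gamma h_\xi^{1/2}\) with the sublevel-set lengths. Each derivative factor must be paired with an interval length of the reciprocal scale, so that no \(\log\) or \(d^{-1}\) loss appears. This is exactly what the uniform, direction-independent sublevel cover of Lemma~\ref{L:arc-derivative-sublevel} provides. An alternative would be monotonicity: \(h_\xi\) is piecewise monotone with \(O_\gamma(1)\) pieces because \(\Theta_\gamma\) is monotone and \(\Theta_\gamma([0,1])\) has bounded length. On each piece, \(\int|\tfrac{\mathrm d}{\mathrm dt}\zeta(h_\xi/d^2)|\leq 2\|\zeta\|_\infty\)-type bounds, via the total variation of \(\zeta\), give the same conclusion.
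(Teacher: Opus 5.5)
Your proof is correct and follows the paper's argument essentially step for step. It uses the same product-rule split into the $\eta_\xi'$ term, the band-factor term and the small-derivative-factor term, bounds each in $L^1$ by pairing $|h_\xi'|\le C_\gamma h_\xi^{1/2}$ with the matching sublevel-set length from Lemma~\ref{L:arc-derivative-sublevel}, and then divides by $|\phi_\xi'|\ge c_\gamma|\xi|d$; your explicit treatment of the case $2d>1$ is a small point the paper leaves implicit.
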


\begin{proof}
Since \(d\in \mathfrak D_{\xi}\), necessarily \(|\xi|\geq16\).  Set
\[
        U_{\xi,d}
        =
        \left\{
        t\in[0,1]:
        \frac d2
        \leq
        h_\xi(t)^{1/2}
        \leq
        2d
        \right\}.
\]
By the support properties of \(\zeta\) and \(\zeta'\),
\[
        \operatorname{spt}\chi_{\xi,d}'
        \subset 
        \operatorname{spt}\chi_{\xi,d}
        \subset
        U_{\xi,d}.
\]
On \(U_{\xi,d}\),
\[
        |\phi_\xi'(t)|\geq c_\gamma|\xi|d,
        \qquad
        |h_\xi'(t)|\leq C_\gamma d,
        \qquad
        |U_{\xi,d}|\leq C_\gamma d.
\]

Let
\[
        V_{\xi,d}
        =
        U_{\xi,d}
        \cap
        \{t\in[0,1]:1\leq|\xi|h_\xi(t)\leq4\}.
\]
Differentiating the definition of \(\chi_{\xi,d}\) and using the fixed cutoff bounds gives
\[
        |\chi_{\xi,d}'(t)|
        \leq
        |\eta_\xi'(t)|
        +
        C_\gamma d^{-2}|h_\xi'(t)|
        \mathbf 1_{U_{\xi,d}}(t)
        +
        C|\xi|\,|h_\xi'(t)|
        \mathbf 1_{V_{\xi,d}}(t).
\]
The first term has \(L^1\)-norm at most \(C\).  
On \(U_{\xi,d}\), \(|h_\xi'|\leq C_\gamma d\) and \(|U_{\xi,d}|\leq C_\gamma d\), so
\[
        d^{-2}
        \int_{U_{\xi,d}}|h_\xi'(t)|\,\mathrm{d}t
        \leq
        C_\gamma.
\]
On \(V_{\xi,d}\), one has
\(
        h_\xi^{1/2}\leq2|\xi|^{-1/2}.
\)
Lemma~\ref{L:arc-derivative-sublevel} therefore gives
\(
        |V_{\xi,d}|\leq C_\gamma|\xi|^{-1/2}
\)
and
\(
        |h_\xi'|\leq C_\gamma|\xi|^{-1/2}
\)
on \(V_{\xi,d}\).
It follows that
\[
        |\xi|
        \int_{V_{\xi,d}}|h_\xi'(t)|\,\mathrm{d}t
        \leq
        C_\gamma.
\]
Consequently,
\[
        \int_0^1|\chi_{\xi,d}'(t)|\,\mathrm{d}t
        \leq
        C_\gamma.
\]
Since
\[
        |\phi_\xi'(t)|
        \geq
        c_\gamma|\xi|d
        \qquad
        (t\in\operatorname{spt}\chi_{\xi,d}),
\]
the preceding variation bound gives
\[
        \int_{\operatorname{spt}\chi_{\xi,d}}
        \frac{|\chi_{\xi,d}'(t)|}{|\phi_\xi'(t)|}
        \,\mathrm{d}t
        \leq
        C_\gamma(|\xi|d)^{-1}.
\]
\end{proof}

\begin{proposition}\label{P:arc-coefficient-estimates}
There is \(C_\gamma<\infty\) such that, for every \(\xi\neq0\),
\(d\in \mathfrak D_{\xi}\), integer \(\ell\geq0\), 
and \(J\in\mathcal{D}_{\ell+1}([0,1])\),
\[
        |c_J(\xi,d,\ell)|
        \leq
        C_\gamma\min\{1,2^{\ell-m_{\xi,d}}\}.
\]
Moreover,
\[
        \left|
        \int_0^1
        \mathrm{e}^{i\phi_\xi(t)}
        \chi_{\xi,d}(t)
        \,\mathrm{d}t
        \right|
        \leq
        C_\gamma(|\xi|d)^{-1}.
\]
Consequently, for all integers \(j,n\geq1\) and
\(\xi\in\mathbb{R}^2\setminus\{0\}\) such that \(2^n\leq|\xi|<2^{n+1}\), one has 
\[
        \sum_{d\in \mathfrak D_{\xi,j}^{\mathrm{osc}}}
        \left|
        \int_0^1
        \mathrm{e}^{i\phi_\xi(t)}
        \chi_{\xi,d}(t)
        \,\mathrm{d}t
        \right|
        \leq
        C_{\gamma,j}2^{-n}.
\]
\end{proposition}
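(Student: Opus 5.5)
The plan is to obtain all three estimates from one integration by parts against \(\mathrm e^{i\phi_\xi}\), with Lemma~\ref{L:arc-variation-estimate} supplying the variation term and Proposition~\ref{P:arc-endpoint-safe-partition} supplying the lower bound on the phase derivative. Only \(d\in\mathfrak D_\xi\) matters, so we may assume \(|\xi|\geq16\). On \(\operatorname{spt}\chi_{\xi,d}\) we then have \(|\phi_\xi'|\geq c_\gamma|\xi|d>0\). Since \(\phi_\xi\) is only \(C^2\), I will avoid second derivatives of \(\phi_\xi\) and work directly with the first-order identity
\[
\mathrm e^{i\phi_\xi}\chi_{\xi,d}=\frac{\chi_{\xi,d}}{i\phi_\xi'}\bigl(\mathrm e^{i\phi_\xi}\bigr)' .
\]

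\textbf{Step 1 (trivial bound).} Since \(0\leq\chi_{\xi,d}\leq1\) and \(|J|=2^{-\ell-1}\), we get \(|c_J(\xi,d,\ell)|\leq 2^\ell|J|=\tfrac12\). This gives the ``\(1\)'' in the minimum.

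\textbf{Step 2 (oscillatory bound).} Integrate by parts on a subinterval \(J'\subset[0,1]\); I will use \(J'=\overline J\) and \(J'=[0,1]\). The key structural fact is that \(\phi_\xi'\) has constant sign on each component of \(\operatorname{spt}\chi_{\xi,d}\), because it cannot vanish there. On such a component,
\[
\int_{J'}\mathrm e^{i\phi_\xi}\chi_{\xi,d}\,\mathrm dt=\Bigl[\frac{\chi_{\xi,d}\,\mathrm e^{i\phi_\xi}}{i\phi_\xi'}\Bigr]_{\partial J'}-\int_{J'}\mathrm e^{i\phi_\xi}\Bigl(\frac{\chi_{\xi,d}}{i\phi_\xi'}\Bigr)'\mathrm dt .
\]
Expanding the derivative gives two pieces.

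The first piece is \(\chi_{\xi,d}'/\phi_\xi'\). Its contribution is at most \(C_\gamma(|\xi|d)^{-1}\) by Lemma~\ref{L:arc-variation-estimate}.

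The second piece is \(\chi_{\xi,d}\,\phi_\xi''/(\phi_\xi')^2\). Here I will use monotonicity rather than a pointwise bound on \(\phi_\xi''\). The phase derivative is \(\phi_\xi'=-2\pi|\xi||\gamma'|\cos(\Theta_\gamma-\alpha_\xi)\). By Lemma~\ref{L:arc-tangent-angle}, \(\Theta_\gamma\) is monotone, so the cosine factor changes monotonicity only boundedly often. Combined with \(|\gamma'|\in C^1\), this lets me bound
\[
\int\Bigl|\Bigl(\frac1{\phi_\xi'}\Bigr)'\Bigr|\,\mathrm dt
\]
over a band component. I will split that integral into boundedly many monotone pieces, each contributing \(2\sup|1/\phi_\xi'|\leq C(|\xi|d)^{-1}\), plus an error term carrying the factor \(|\gamma'|'\). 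That error term is bounded by
\[
C\,|\xi|^{-1}d^{-1}\,|U_{\xi,d}|\cdot d^{-1}\cdot d\lesssim (|\xi|d)^{-1}.
\]
I expect this to be the main obstacle: making the bounded number of monotone pieces and the \(|\gamma'|\) factor precise. Directly estimating \(|\phi_\xi''|\leq C|\xi|\) instead would lose a factor \(d^{-1}\). The fallback is to write
\[
\frac1{\phi_\xi'}=\frac{-1}{2\pi|\xi||\gamma'|}\cdot\frac1{\cos(\Theta_\gamma-\alpha_\xi)}
\]
and use \(\bigl|(\sec u)'\bigr|\leq\sec^2u\,|\sin u|\) together with \(|\Theta_\gamma'|\leq C_\gamma\). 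This gives
\[
\int_{U_{\xi,d}}\frac{\mathrm dt}{|\xi|d^2}\leq\frac{C_\gamma d}{|\xi|d^2}=\frac{C_\gamma}{|\xi|d},
\]
which is exactly what is needed.

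\textbf{Step 3 (boundary terms).} The boundary terms are bounded by \(2\sup|\chi_{\xi,d}/\phi_\xi'|\leq C(|\xi|d)^{-1}\).

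\textbf{Step 4 (conclusions).} With \(J'=\overline J\), multiplying by \(2^\ell\) gives
\[
|c_J(\xi,d,\ell)|\leq C_\gamma 2^\ell(|\xi|d)^{-1}\leq C_\gamma 2^{\ell-m_{\xi,d}+1},
\]
because \(|\xi|d>2^{m_{\xi,d}-1}\). Combined with Step 1, this is the first assertion. With \(J'=[0,1]\) the same computation gives the second assertion.

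For the final sum, Proposition~\ref{P:arc-safe-support-and-bands} gives \(d\geq2^{-j}\) for every oscillatory band and at most \(j+2\) such bands. Each term is therefore at most
\[
C_\gamma(2^n2^{-j})^{-1}=C_\gamma 2^j2^{-n},
\]
and summing gives \(C_{\gamma,j}2^{-n}\) with \(C_{\gamma,j}=(j+2)2^jC_\gamma\).
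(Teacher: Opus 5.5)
Your proposal is correct and is essentially the paper's proof: one integration by parts on $J$ and on $[0,1]$, Lemma~\ref{L:arc-variation-estimate} for the $\chi_{\xi,d}'$ term, boundary terms bounded by $\sup|\chi_{\xi,d}/\phi_\xi'|$, the trivial bound $2^\ell|J|=\tfrac12$, and the band count $\#\mathfrak D_{\xi,j}^{\mathrm{osc}}\leq j+2$ with $d\geq2^{-j}$. The obstacle you anticipate in Step~2 does not exist, because $|\operatorname{spt}\chi_{\xi,d}|\leq C_\gamma d$: the direct bound $|\phi_\xi''|\leq C_\gamma|\xi|$ (available because $\gamma\in C^2$) gives $\int|\chi_{\xi,d}\phi_\xi''|/|\phi_\xi'|^2\,\mathrm dt\leq C_\gamma|\xi|\,d/(|\xi|d)^2=C_\gamma(|\xi|d)^{-1}$ with no loss; this is exactly what the paper does and what your secant ``fallback'' computes, so you can drop the monotone-pieces argument.
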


\begin{proof}
On \(\operatorname{spt}\chi_{\xi,d}\),
\[
        |\phi_\xi'(t)|
        \geq
        c_\gamma|\xi|d,
        \qquad
        |\phi_\xi''(t)|
        \leq
        C_\gamma|\xi|.
\]
Moreover,
\(
        |\operatorname{spt}\chi_{\xi,d}|
        \leq
        C_\gamma d.
\)

We first prove
\[
        \left|
        \int_J
        \mathrm{e}^{i\phi_\xi(t)}
        \chi_{\xi,d}(t)
        \,\mathrm{d}t
        \right|
        \leq
        C_\gamma(|\xi|d)^{-1}.
\]
Since \(\chi_{\xi,d}\) vanishes in a neighborhood of \(\{\phi_\xi'=0\}\), 
the quotient \( \frac{\chi_{\xi,d}}{\phi_\xi'}\) extends by zero to a \(C^1\) function on \([0,1]\).  
Hence
\[
\begin{aligned}
        \int_J
        \mathrm{e}^{i\phi_\xi(t)}
        \chi_{\xi,d}(t)\,\mathrm{d}t
        &=
        \left[
        \frac{\mathrm{e}^{i\phi_\xi(t)}
        \chi_{\xi,d}(t)}
        {i\phi_\xi'(t)}
        \right]_{\partial J}
\\
        &\quad-
        \int_J
        \mathrm{e}^{i\phi_\xi(t)}
        \left(
        \frac{\chi_{\xi,d}'(t)}{i\phi_\xi'(t)}
        -
        \frac{\chi_{\xi,d}(t)\phi_\xi''(t)}
        {i(\phi_\xi'(t))^2}
        \right)
        \,\mathrm{d}t.
\end{aligned}
\]
The boundary term is bounded by \(C_\gamma(|\xi|d)^{-1}\).

The \(\chi_{\xi,d}'\)-term is bounded by Lemma~\ref{L:arc-variation-estimate}.  
The \(\phi_\xi''\)-term is bounded by
\[
C_\gamma \int_{\operatorname{spt}\chi_{\xi,d}} \frac{|\xi|}{|\xi|^2d^2} \,\mathrm{d}t \leq C_\gamma \frac{|\xi|}{|\xi|^2d^2}\cdot d = C_\gamma(|\xi|d)^{-1}.
\]
This proves the integral estimate on \(J\). 
Multiplying by \(2^\ell\), using \(2^{m_{\xi,d}-1}<|\xi|d\), 
and combining with the trivial estimate \(2^\ell|J|=1/2\), we obtain
\[
        |c_J(\xi,d,\ell)|
        \leq
        C_\gamma
        \min\{1,2^{\ell-m_{\xi,d}}\}.
\]
The same argument with \([0,1]\) in place of \(J\) gives
\[
        \left|
        \int_0^1
        \mathrm{e}^{i\phi_\xi(t)}
        \chi_{\xi,d}(t)
        \,\mathrm{d}t
        \right|
        \leq
        C_\gamma(|\xi|d)^{-1}.
\]
If \(d\in \mathfrak D_{\xi,j}^{\mathrm{osc}}\), then \(d\geq2^{-j}\), and
Proposition~\ref{P:arc-safe-support-and-bands} gives \(\#\mathfrak D_{\xi,j}^{\mathrm{osc}}\leq j+2\).
Therefore, for \(2^n\leq|\xi|<2^{n+1}\),
\[
\begin{aligned}
        \sum_{d\in \mathfrak D_{\xi,j}^{\mathrm{osc}}}
        \left|
        \int_0^1
        \mathrm{e}^{i\phi_\xi(t)}
        \chi_{\xi,d}(t)
        \,\mathrm{d}t
        \right|
        &\leq
        C_\gamma
        \sum_{d\in \mathfrak D_{\xi,j}^{\mathrm{osc}}}
        (|\xi|d)^{-1}
\\
        &\leq
        C_{\gamma,j}|\xi|^{-1}
\\
        &\leq
        C_{\gamma,j}2^{-n}.
\end{aligned}
\]
The proof is complete.
\end{proof}

\subsection{Small-interval transfer for the moving safe region}\label{SS:arc-geometry-safe-transfer}

The safe support varies with \(\xi\) and \(n\), 
so atomlessness of \(\mu\) must be transferred to the approximants \(\mu_{n+h}\).  
The next lemma gives the required uniform form.

\begin{lemma}\label{L:arc-small-interval-transfer}
For all integers \(h\geq0\) and \(j\geq1\),
\[
\limsup_{n\to\infty}\sup_{2^n\leq|\xi|<2^{n+1}} \mu_{n+h}\bigl(\operatorname{spt}\Psi_{\xi,j}^{\mathrm{safe}}\bigr) \leq C_\gamma\Delta_{C_\gamma2^{-j}}(\mu)
\]
almost surely on non-extinction, where
\[
        \Delta_\rho(\mu)
        =
        \sup\{\mu(I):I\subset[0,1]\text{ is an interval and }|I|\leq\rho\}.
\]
Consequently, for every integer \(h\geq0\),
\[
        \lim_{j\to\infty}
        \limsup_{n\to\infty}
        \sup_{2^n\leq|\xi|<2^{n+1}}
        \mu_{n+h}
        \bigl(\operatorname{spt}\Psi_{\xi,j}^{\mathrm{safe}}\bigr)
        =
        0
\]
almost surely on non-extinction. 
\end{lemma}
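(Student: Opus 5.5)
Fix integers \(h\geq0\) and \(j\geq1\), and work on the probability-one event on which \(\mu_m\Rightarrow\mu\) and, on non-extinction, \(\mu\) is atomless (Fact~\ref{F:preliminaries-standard-cascade-facts}). The plan is to reduce the moving safe region to a bounded number of intervals of uniformly small length, and then apply the weak-transfer part of Lemma~\ref{L:preliminaries-small-interval-modulus} to \(\nu_m=\mu_m\), \(\nu=\mu\), along the subsequence \(m=n+h\).

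\textbf{Step 1 (covering).} By Proposition~\ref{P:arc-safe-support-and-bands}, for \(2^n\leq|\xi|<2^{n+1}\) the set \(\operatorname{spt}\Psi_{\xi,j}^{\mathrm{safe}}\) lies in a union of at most \(C_\gamma\) intervals. Each of these intervals has length at most \(C_\gamma(2^{-j}+2^{-n/2})\). Once \(n\) is so large that \(2^{-n/2}\leq2^{-j}\), every such interval has length at most \(\rho_j:=2C_\gamma2^{-j}\), and this bound is uniform in \(\xi\). Subadditivity then gives the deterministic bound
\[
\sup_{2^n\leq|\xi|<2^{n+1}}\mu_{n+h}\bigl(\operatorname{spt}\Psi_{\xi,j}^{\mathrm{safe}}\bigr)\leq C_\gamma\,\Delta_{\rho_j}(\mu_{n+h}).
\]
The important point is that the right-hand side no longer depends on \(\xi\). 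The supremum over the moving region, including the endpoint pieces, is absorbed entirely by the definition of \(\Delta_\rho\).

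\textbf{Step 2 (transfer).} Because \(\mu_{n+h}\Rightarrow\mu\) as \(n\to\infty\) along a subsequence of an almost surely weakly convergent sequence, the second assertion of Lemma~\ref{L:preliminaries-small-interval-modulus} gives
\[
\limsup_{n}\Delta_{\rho_j}(\mu_{n+h})\leq\Delta_{2\rho_j}(\mu).
\]
Enlarging \(C_\gamma\) so that \(2\rho_j\leq C_\gamma2^{-j}\) yields the first claim. This step holds on the full convergence event; non-extinction is not yet needed.

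\textbf{Step 3 (limit in \(j\)).} On non-extinction \(\mu\) is atomless, so the first part of Lemma~\ref{L:preliminaries-small-interval-modulus} gives \(\Delta_{C_\gamma2^{-j}}(\mu)\to0\) as \(j\to\infty\), which proves the second claim. The statement quantifies over countably many pairs \((h,j)\), and all of them are handled on the single event described above, so no further intersection is needed beyond what is already in \(\mathcal E_0\).

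The only point requiring care is Step 1. One must check that the number of covering intervals is bounded independently of \(\xi\) and \(n\), including the case where the stationary region runs into an endpoint. Proposition~\ref{P:arc-safe-support-and-bands} supplies exactly this, so I expect no genuine obstacle. A minor point is that the lemma's intervals are subsets of \([0,1]\); when a covering interval sticks out of \([0,1]\), I intersect it with \([0,1]\).
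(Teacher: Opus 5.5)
Your proposal is correct and follows the paper's proof essentially step for step. The covering from Proposition~\ref{P:arc-safe-support-and-bands} reduces the moving safe region to the \(\xi\)-independent bound \(C_\gamma\Delta_{2C_\gamma2^{-j}}(\mu_{n+h})\) for large \(n\), the weak-transfer half of Lemma~\ref{L:preliminaries-small-interval-modulus} then gives \(C_\gamma\Delta_{4C_\gamma2^{-j}}(\mu)\), and atomlessness on non-extinction yields the limit in \(j\). Your extra remarks are accurate refinements of the same argument: the first bound needs only the weak-convergence event, and a single event serves all pairs \((h,j)\).
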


\begin{proof}
By Proposition~\ref{P:arc-safe-support-and-bands}, for \(2^n\leq|\xi|<2^{n+1}\), 
the set \(\operatorname{spt}\Psi_{\xi,j}^{\mathrm{safe}}\) is contained in the union of at most \(C_\gamma\) intervals of length at most \(C_\gamma(2^{-j}+2^{-n/2})\).
Therefore
\[
\sup_{2^n\leq|\xi|<2^{n+1}} \mu_{n+h} \bigl(\operatorname{spt}\Psi_{\xi,j}^{\mathrm{safe}}\bigr) \leq C_\gamma \Delta_{C_\gamma(2^{-j}+2^{-n/2})}(\mu_{n+h}).
\]
Fix \(j\).  For all sufficiently large \(n\), \(2^{-n/2}\leq2^{-j}\).  
Since \(\mu_{n+h}\Rightarrow\mu\), the monotonicity of the small-interval modulus 
and Lemma~\ref{L:preliminaries-small-interval-modulus} give
\[
\begin{aligned}
        \limsup_{n\to\infty}
        &\sup_{2^n\leq|\xi|<2^{n+1}}
        \mu_{n+h}
        \bigl(\operatorname{spt}\Psi_{\xi,j}^{\mathrm{safe}}\bigr)
\\
        &\leq
        C_\gamma
        \limsup_{n\to\infty}
        \Delta_{2C_\gamma2^{-j}}(\mu_{n+h})
\\
        &\leq
        C_\gamma
        \Delta_{4C_\gamma2^{-j}}(\mu).
\end{aligned}
\]
After enlarging \(C_\gamma\), this is the asserted bound.
On non-extinction, \(\mu\) is atomless, so \(\Delta_{C_\gamma2^{-j}}(\mu)\to0\) as \(j\to\infty\).  
This gives the second assertion.
\end{proof}

\subsection{Endpoint-safe fixed-cutoff package for the annular assembly}
\label{SS:arc-geometry-output}

The preceding results are summarized in the following proposition.

\begin{proposition}[Fixed-cutoff package]
\label{P:arc-fixed-cutoff-package}

For every integer \(j\geq1\) and every
\(\xi\in\mathbb{R}^2\setminus\{0\}\),
\[
        \Psi_{\xi,j}^{\mathrm{safe}}
        +
        \sum_{d\in \mathfrak D_{\xi,j}^{\mathrm{osc}}}\chi_{\xi,d}
        =
        1
        \qquad\text{on }[0,1].
\]

For every fixed integer \(h\geq0\), almost surely on non-extinction, and for every \(j\geq1\),
\[
        \limsup_{n\to\infty}
        \sup_{2^n\leq|\xi|<2^{n+1}}
        \mu_{n+h}
        \bigl(\operatorname{spt}\Psi_{\xi,j}^{\mathrm{safe}}\bigr)
        \leq
        C_\gamma\Delta_{C_\gamma2^{-j}}(\mu).
\]
Moreover, this upper bound tends to zero as \(j\to\infty\) on non-extinction.

Suppose that \(n\geq1\) and \(2^n\leq|\xi|<2^{n+1}\).  
Then
\[
        \#\mathfrak D_{\xi,j}^{\mathrm{osc}}
        \leq
        C_{\gamma,j},
        \qquad
        |m_{\xi,d}-n|
        \leq
        C_{\gamma,j}
        \quad
        \bigl(d\in \mathfrak D_{\xi,j}^{\mathrm{osc}}\bigr).
\]
For every \(d\in \mathfrak D_{\xi,j}^{\mathrm{osc}}\), every \(\ell\geq0\),
and every \(J\in\mathcal{D}_{\ell+1}([0,1])\),
\[
        |c_J(\xi,d,\ell)|
        \leq
        C_\gamma\min\{1,2^{\ell-m_{\xi,d}}\}.
\]
Moreover,
\[
        \sum_{d\in \mathfrak D_{\xi,j}^{\mathrm{osc}}}
        \left|
        \int_0^1
        \mathrm{e}^{i\phi_\xi(t)}
        \chi_{\xi,d}(t)
        \,\mathrm{d}t
        \right|
        \leq
        C_{\gamma,j}2^{-n}.
\]

Finally, let \(d\in \mathfrak D_{\xi,j}^{\mathrm{osc}}\), \(k\geq1\), 
and \(J\in\mathcal{D}_k([0,1])\), and set
\[
        \Gamma_J(\xi,d)
        =
        A_{J^-}c_J(\xi,d,k-1).
\]
Then \(\Gamma_J(\xi,d)\) is \(\mathcal{F}_{k-1}\)-measurable and satisfies
\[
\begin{aligned}
        |\Gamma_J(\xi,d)|
        &\leq
        C_\gamma A_{J^-}
        \min\{1,2^{k-m_{\xi,d}}\},
\\
        |\Gamma_J(\xi,d)|W_J
        &\leq
        C_\gamma A_J
        \min\{1,2^{k-m_{\xi,d}}\}.
\end{aligned}
\]
\end{proposition}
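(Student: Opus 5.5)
The fixed-cutoff package is a summary statement, so the plan is to assemble each assertion from the results of this section, proving the clauses in the order they are stated.

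\textbf{Partition identity.} This follows directly from Proposition~\ref{P:arc-endpoint-safe-partition}. Because \(\mathfrak D_\xi\) is the disjoint union of \(\mathfrak D_{\xi,j}^{\mathrm{small}}\) and \(\mathfrak D_{\xi,j}^{\mathrm{osc}}\), the full partition of unity regroups as \(\Psi_{\xi,j}^{\mathrm{safe}}+\sum_{d\in\mathfrak D_{\xi,j}^{\mathrm{osc}}}\chi_{\xi,d}=1\). For \(|\xi|<16\) the conventions give \(\Psi_{\xi,j}^{\mathrm{safe}}\equiv1\) and an empty band set, so the identity holds there as well.

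\textbf{Safe-region mass bound.} This is the first assertion of Lemma~\ref{L:arc-small-interval-transfer}. The only point to check is the order of quantifiers: the lemma is stated for each fixed \(h,j\). I would intersect the corresponding probability-one events over the countably many integers \(j\geq1\), in line with the convention of Subsection~\ref{SS:preliminaries-full-event}, so that one event serves all \(j\) for the given \(h\). The claim that the bound vanishes as \(j\to\infty\) on non-extinction then follows from Lemma~\ref{L:preliminaries-small-interval-modulus} together with atomlessness (Fact~\ref{F:preliminaries-standard-cascade-facts}).

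\textbf{Band count, scale shifts and forcing bound.} The counting and scale-shift bounds come from Proposition~\ref{P:arc-safe-support-and-bands}, taking \(C_{\gamma,j}:=\max\{j+2,\,j,\,2\}\) enlarged as needed. The coefficient bound and the deterministic forcing bound are exactly Proposition~\ref{P:arc-coefficient-estimates}, restricted to the oscillatory bands.

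\textbf{Skeleton coefficients.} Set \(\ell=k-1\). Since \(c_J(\xi,d,k-1)\) is deterministic and \(A_{J^-}\) is \(\mathcal F_{k-1}\)-measurable, \(\Gamma_J(\xi,d)\) is \(\mathcal F_{k-1}\)-measurable. Proposition~\ref{P:arc-coefficient-estimates} gives
\[
|c_J(\xi,d,k-1)|\leq C_\gamma\min\{1,2^{k-1-m_{\xi,d}}\}\leq C_\gamma\min\{1,2^{k-m_{\xi,d}}\},
\]
which yields the parent-mass bound. For the child-mass bound, the identity \(A_{J^-}W_J=2A_J\) gives
\[
|\Gamma_J(\xi,d)|W_J\leq 2C_\gamma A_J\min\{1,2^{k-m_{\xi,d}}\}.
\]
Enlarging \(C_\gamma\) absorbs the factor \(2\).

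\textbf{Main obstacle.} The proof has no real analytic difficulty, since every estimate is already established. The only delicate point is bookkeeping of the null sets: the almost-sure clause must hold simultaneously for all \(j\) at fixed \(h\), and this is handled by the countable intersection described above.
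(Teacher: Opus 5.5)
Your proposal is correct and follows the paper's proof essentially verbatim. Each clause is read off from Proposition~\ref{P:arc-endpoint-safe-partition}, Lemma~\ref{L:arc-small-interval-transfer}, Proposition~\ref{P:arc-safe-support-and-bands}, and Proposition~\ref{P:arc-coefficient-estimates}, and the skeleton bounds come from taking $\ell=k-1$ and using $A_{J^-}W_J=2A_J$. Your extra remarks on the $|\xi|<16$ convention and on intersecting null sets over countably many $j$ are harmless bookkeeping that the paper leaves implicit.
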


\begin{proof}
The partition identity follows from Proposition~\ref{P:arc-endpoint-safe-partition} 
and the definitions of \(\mathfrak D_{\xi,j}^{\mathrm{small}}\), \(\mathfrak D_{\xi,j}^{\mathrm{osc}}\), 
and \(\Psi_{\xi,j}^{\mathrm{safe}}\).
The safe-region estimate is Lemma~\ref{L:arc-small-interval-transfer}.
The band number and phase-shift bounds follow from Proposition~\ref{P:arc-safe-support-and-bands}, 
while the coefficient and deterministic-forcing estimates follow from Proposition~\ref{P:arc-coefficient-estimates}.

For the final assertion, \(c_J(\xi,d,k-1)\) is deterministic and \(A_{J^-}\) is \(\mathcal{F}_{k-1}\)-measurable.  
Hence \(\Gamma_J(\xi,d)\) is \(\mathcal{F}_{k-1}\)-measurable.
Applying the coefficient estimate with \(\ell=k-1\) gives the first bound.  
The second follows from \(A_{J^-}W_J=2A_J\), after absorbing the factor \(2\) into \(C_\gamma\).
\end{proof}

\begin{remark}\label{R:arc-fixed-cutoff-necessity}
The fixed cutoff keeps both the number of surviving bands and the phase-scale shifts uniformly bounded on each annulus.  
Without it, the number of active bands may grow with \(n\), while \(m_{\xi,d}\) may drift far below \(n\).  
The qualitative lower-deviation estimates of Section~\ref{S:spine-engine} provide no rate that absorbs either loss.
\end{remark}

\begin{remark}\label{R:arc-endpoint-safe-principle}
The endpoint pieces \(\chi_{\xi,0}\) and \(\chi_{\xi,1}\) remain in \(\Psi_{\xi,j}^{\mathrm{safe}}\).  
They are controlled by atomlessness and weak transfer rather than by integration by parts.  
This is the endpoint-specific feature of the fixed-arc argument.
\end{remark}

\section{Annular assembly for fixed nondegenerate arcs}\label{S:arc-assembly}

Let \(\gamma:[0,1]\to\mathbb{R}^2\) be a fixed nondegenerate \(C^2\) embedded arc, 
and let \(\mu_\gamma=\gamma_\#\mu\).
For \(\xi\in\mathbb{R}^2\setminus\{0\}\), recall that \(\phi_\xi(t)=-2\pi\xi\cdot\gamma(t)\).
For \(n\geq1\), define the annular supremum
\[
S_{\gamma,n}
=
\sup_{2^n\leq|\xi|<2^{n+1}}
|\widehat{\mu_\gamma}(\xi)|.
\]
We prove that
\[
S_{\gamma,n}\to0 \qquad(n\to\infty)
\]
almost surely on \(\{\mu([0,1])>0\}\).

We work throughout on a common probability-one event on which:
\[
        \mu_n\Rightarrow\mu,\qquad
        Z_n\to Z_\infty<\infty,
\]
all descendant limits \(Z_\infty^{(u)}\) exist,
\[
        \mu(I_u)=A_uZ_\infty^{(u)}
        \qquad(u\in\{0,1\}^{\ast}),
\]
dyadic endpoints have zero \(\mu\)-mass, 
the terminal descendant theorem over planar grids holds for all countable parameter choices, 
the abstract stopped skeleton theorem holds for all countable parameter choices, 
and the endpoint-safe fixed-cutoff package in Proposition~\ref{P:arc-fixed-cutoff-package}  is available for every integer cutoff \(j\geq1\).

For \(K,K_1\in\mathbb{N}\), recall the localization events
\[
\Omega_K=\{Z_\infty\leq K\},
\qquad
\text{and}
\qquad
\mathcal{Z}_{K_1}
=
\left\{\sup_{\ell\geq0}Z_\ell\leq K_1\right\}.
\]
Since \(Z_n\to Z_\infty<\infty\) almost surely,
\[
        \mathbb{P}
        \left(
        \bigcup_{K=1}^{\infty}
        \bigcup_{K_1=1}^{\infty}
        \Omega_K\cap\mathcal{Z}_{K_1}
        \right)=1.
\]

\subsection{Original-transform gridding}\label{SS:arc-assembly-original-grid}

Set \(R_\gamma := \max\{1,\sup_{t\in[0,1]}|\gamma(t)|\}\).

\begin{lemma}\label{L:arc-original-grid}
Fix \(K\geq1\) and \(\varepsilon>0\).  
For every \(n\geq1\), there is a deterministic finite set
\[
        \Lambda_n(K,\varepsilon)
        \subset
        \{\xi\in\mathbb{R}^2:2^n\leq|\xi|<2^{n+1}\}
\]
such that
\[
        \#\Lambda_n(K,\varepsilon)
        \leq
        C_\gamma
        \bigl(1+K^2\varepsilon^{-2}\bigr)2^{2n}, 
\]
and, on \(\Omega_K\),
\[
        S_{\gamma,n}>\varepsilon
        \quad\Longrightarrow\quad
        \max_{\xi\in\Lambda_n(K,\varepsilon)}
        |\widehat{\mu_\gamma}(\xi)|
        >
        \frac{31}{32}\varepsilon.
\]
\end{lemma}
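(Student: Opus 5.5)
The plan mirrors the gridding step in the proof of Theorem~\ref{T:main-interval-rajchman}: a Lipschitz bound for \(\widehat{\mu_\gamma}\) in \(\xi\), followed by a deterministic net of the planar annulus.

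First, the Lipschitz bound. On \(\Omega_K\), for any \(\xi,\eta\in\mathbb{R}^2\), we have
\[
|\widehat{\mu_\gamma}(\xi)-\widehat{\mu_\gamma}(\eta)|
\leq\int_0^1\bigl|\mathrm{e}^{-2\pi i\xi\cdot\gamma(t)}-\mathrm{e}^{-2\pi i\eta\cdot\gamma(t)}\bigr|\,\mathrm{d}\mu(t)
\leq 2\pi R_\gamma|\xi-\eta|\,Z_\infty
\leq 2\pi R_\gamma K|\xi-\eta|.
\]
This uses \(|\mathrm{e}^{ia}-\mathrm{e}^{ib}|\leq|a-b|\), \(|\gamma(t)|\leq R_\gamma\), and \(\mu([0,1])=Z_\infty\leq K\).

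Second, the net. Set \(\delta:=\varepsilon/(64\pi R_\gamma K)\). Cover the closed disc of radius \(2^{n+1}\) by the deterministic lattice \(\delta\mathbb{Z}^2\), and keep only those lattice cells meeting the annulus \(\mathcal A_n\). From each such cell, pick one deterministic point of \(\mathcal A_n\) lying in it. Every \(\xi\in\mathcal A_n\) lies in some cell of diameter \(\sqrt2\,\delta\), so it is within \(\sqrt2\,\delta\) of the chosen point. The number of cells meeting the disc is at most \(C(1+2^{n+1}/\delta)^2\). Since \(2^{n}\geq1\), this is at most
\[
C\bigl(2^{n}+2^{n}R_\gamma K\varepsilon^{-1}\bigr)^2\leq C_\gamma\bigl(1+K^2\varepsilon^{-2}\bigr)2^{2n},
\]
with \(R_\gamma\) absorbed into \(C_\gamma\).

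Third, the implication. Suppose \(S_{\gamma,n}>\varepsilon\) on \(\Omega_K\). Choose \(\xi\in\mathcal A_n\) with \(|\widehat{\mu_\gamma}(\xi)|>\varepsilon\), and let \(\eta\in\Lambda_n(K,\varepsilon)\) be the net point within \(\sqrt2\,\delta\) of \(\xi\). The Lipschitz bound gives
\[
|\widehat{\mu_\gamma}(\eta)|>\varepsilon-2\pi R_\gamma K\sqrt2\,\delta=\varepsilon\Bigl(1-\tfrac{\sqrt2}{32}\Bigr)>\tfrac{31}{32}\varepsilon.
\]

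The only point needing care is ensuring the net points lie in the half-open annulus. Selecting a point of \(\mathcal A_n\) inside each cell that meets \(\mathcal A_n\) handles this. No genuine obstacle is expected.
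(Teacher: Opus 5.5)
Your route is the paper's (a Lipschitz bound in \(\xi\) on \(\Omega_K\), then a deterministic net of the annulus at scale \(\delta=\varepsilon/(64\pi R_\gamma K)\)), but the last numerical step is false as written. With lattice cells of side \(\delta\), a point \(\xi\in\mathcal A_n\) is only guaranteed to lie within the cell diameter \(\sqrt2\,\delta\) of the chosen net point \(\eta\). The Lipschitz loss is therefore \(2\pi R_\gamma K\sqrt2\,\delta=\sqrt2\,\varepsilon/32\), which is larger than \(\varepsilon/32\). Numerically, \(1-\sqrt2/32\approx0.956<31/32\approx0.969\), so your inequality \(\varepsilon(1-\sqrt2/32)>\tfrac{31}{32}\varepsilon\) goes the wrong way. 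As written, the bound \(|\widehat{\mu_\gamma}(\eta)|>\tfrac{31}{32}\varepsilon\) does not follow.

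The repair is immediate: use the lattice \((\delta/\sqrt2)\mathbb{Z}^2\) (or \((\delta/2)\mathbb{Z}^2\)). Then every cell has diameter at most \(\delta\), and each \(\xi\in\mathcal A_n\) lies within \(\delta\) of its net point. This gives \(|\widehat{\mu_\gamma}(\eta)|\geq|\widehat{\mu_\gamma}(\xi)|-\varepsilon/32>\tfrac{31}{32}\varepsilon\). The cell count changes only by an absolute factor, which is absorbed into \(C_\gamma(1+K^2\varepsilon^{-2})2^{2n}\). The paper sidesteps the issue by taking a maximal \(\delta\)-separated subset of the annulus. Such a set is automatically a \(\delta\)-net with its points already in \(\mathcal A_n\), and its cardinality is bounded by a packing estimate. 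Your lattice construction is equally valid once the mesh is corrected; the rest of your argument (the Lipschitz bound and the counting) is fine.
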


\begin{proof}
For \(\xi,\eta\in\mathbb{R}^2\), on \(\Omega_K\),
\[
\begin{aligned}
        |\widehat{\mu_\gamma}(\xi)-\widehat{\mu_\gamma}(\eta)|
        &\leq
        \int_0^1
        \left|
        \mathrm{e}^{-2\pi i\xi\cdot\gamma(t)}
        -
        \mathrm{e}^{-2\pi i\eta\cdot\gamma(t)}
        \right|
        \,\mathrm{d}\mu(t)
\\
        &\leq
        2\pi R_\gamma|\xi-\eta|\mu([0,1])
\\
        &\leq
        2\pi R_\gamma K|\xi-\eta|.
\end{aligned}
\]
Choose \(\Lambda_n(K,\varepsilon)\) to be a maximal 
\(\frac{\varepsilon}{64\pi R_\gamma K}\)-separated subset of \(\{\xi\in\mathbb{R}^2:2^n\leq|\xi|<2^{n+1}\}\).
Then \(\Lambda_n(K,\varepsilon)\) is a \(\frac{\varepsilon}{64\pi R_\gamma K}\)-net of the annulus.
A standard packing estimate gives
\[
        \#\Lambda_n(K,\varepsilon)
        \leq
        C_\gamma
        \bigl(1+K^2\varepsilon^{-2}\bigr)2^{2n}.
\]
If \(S_{\gamma,n}>\varepsilon\), choose \(\eta\) in the annulus with
\(
        |\widehat{\mu_\gamma}(\eta)|>\varepsilon.
\)
Choose \(\xi\in\Lambda_n(K,\varepsilon)\) with \(|\xi-\eta|\leq\frac{\varepsilon}{64\pi R_\gamma K}\).
The Lipschitz estimate gives
\[
        |\widehat{\mu_\gamma}(\xi)|
        \geq
        |\widehat{\mu_\gamma}(\eta)|
        -
        |\widehat{\mu_\gamma}(\xi)-\widehat{\mu_\gamma}(\eta)|
        >
        \varepsilon-\frac{\varepsilon}{32}
        =
        \frac{31}{32}\varepsilon.
\]
This proves the lemma.
\end{proof}

\begin{remark}
The gridding in Lemma~\ref{L:arc-original-grid} is applied only to the original transform
\(
\widehat{\mu_\gamma}(\xi).
\)
The moving cutoffs \( \Psi_{\xi,j}^{\mathrm{safe}}, \chi_{\xi,d}\) depend on \(\xi\), 
and we never use Lipschitz estimates in \(\xi\) for the decomposed pieces.
\end{remark}

\subsection{Atomization and terminal descendants}\label{SS:arc-assembly-atomization}

Choose once and for all a deterministic point \(t_u\in I_u\) for every finite word \(u\).
Since \(\gamma\) is Lipschitz, for every \(n\geq1\), \(h\in\mathbb{N}_0\), \(|u|=n+h\), \(t\in I_u\), 
and \(2^n\leq|\xi|<2^{n+1}\),
\[
        |\phi_\xi(t)-\phi_\xi(t_u)|
        \leq
        C_\gamma2^{-h}.
\]

\begin{lemma}\label{L:arc-limiting-atomization}
Fix \(K\geq1\) and \(h\in\mathbb{N}_0\).  Then, on \(\Omega_K\), for every \(n\geq1\),
\[
        \left|
        \widehat{\mu_\gamma}(\xi)
        -
        \sum_{|u|=n+h}
        \mu(I_u)\mathrm{e}^{i\phi_\xi(t_u)}
        \right|
        \leq
        C_\gamma K2^{-h}
\]
uniformly over \(2^n\leq|\xi|<2^{n+1}\).
\end{lemma}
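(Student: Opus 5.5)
The plan is to write \(\widehat{\mu_\gamma}(\xi)=\int_0^1 \mathrm{e}^{i\phi_\xi(t)}\,\mathrm{d}\mu(t)\) and split it over the partition \(\mathcal D_{n+h}([0,1])\). The dyadic endpoints carry zero \(\mu\)-mass on the common probability-one event, so
\[
        \widehat{\mu_\gamma}(\xi)
        =
        \sum_{|u|=n+h}\int_{I_u}\mathrm{e}^{i\phi_\xi(t)}\,\mathrm{d}\mu(t).
\]
Subtracting \(\sum_{|u|=n+h}\mu(I_u)\mathrm{e}^{i\phi_\xi(t_u)}\) term by term, the difference is bounded by
\[
        \sum_{|u|=n+h}\int_{I_u}
        \bigl|\mathrm{e}^{i\phi_\xi(t)}-\mathrm{e}^{i\phi_\xi(t_u)}\bigr|\,\mathrm{d}\mu(t)
        \leq
        \sum_{|u|=n+h}\int_{I_u}|\phi_\xi(t)-\phi_\xi(t_u)|\,\mathrm{d}\mu(t).
\]
Here we use \(|\mathrm{e}^{ia}-\mathrm{e}^{ib}|\leq|a-b|\) for real \(a,b\).

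The key input is the displayed phase estimate just before the lemma. For \(t\in I_u\) with \(|u|=n+h\), we have \(|t-t_u|\leq 2^{-n-h}\). Together with \(|\xi|<2^{n+1}\) and the Lipschitz bound \(\|\gamma'\|_\infty\leq C_\gamma\), this gives
\[
        |\phi_\xi(t)-\phi_\xi(t_u)|
        \leq 2\pi|\xi|\,\|\gamma'\|_\infty|t-t_u|
        \leq 4\pi C_\gamma 2^{-h}.
\]
This bound is uniform in \(u\), in \(t\in I_u\), and in \(\xi\) in the annulus. The difference is therefore at most \(C_\gamma 2^{-h}\sum_{|u|=n+h}\mu(I_u)=C_\gamma 2^{-h}\mu([0,1])=C_\gamma 2^{-h}Z_\infty\). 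On \(\Omega_K\) this is at most \(C_\gamma K2^{-h}\), which is the asserted bound.

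There is no genuine obstacle. The only care needed is bookkeeping: the partition identity must hold for the limiting measure \(\mu\), which uses the endpoint convention for \(I_u\) or the null-endpoint event; the constant must depend only on \(\gamma\); and the estimate must hold for every \(n\geq1\) and uniformly over the annulus, not just in the limit. All of this follows because the phase bound is deterministic and holds pointwise.
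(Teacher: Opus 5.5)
Your proposal is correct and follows essentially the same route as the paper: you split over the partition $\mathcal D_{n+h}([0,1])$, use the pointwise phase bound $|\phi_\xi(t)-\phi_\xi(t_u)|\leq C_\gamma 2^{-h}$, and then bound the total mass by $\mu([0,1])=Z_\infty\leq K$ on $\Omega_K$. One small remark: with the paper's half-open endpoint convention the intervals $I_u$ already form a genuine partition of $[0,1]$, so the null-endpoint event is not actually needed for the splitting step.
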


\begin{proof}
By the preceding phase estimate, on \(\Omega_K\),
\[
\begin{aligned}
        \left|
        \widehat{\mu_\gamma}(\xi)
        -
        \sum_{|u|=n+h}
        \mu(I_u)\mathrm{e}^{i\phi_\xi(t_u)}
        \right|
        &\leq
        \sum_{|u|=n+h}
        \int_{I_u}
        \left|
        \mathrm{e}^{i\phi_\xi(t)}
        -
        \mathrm{e}^{i\phi_\xi(t_u)}
        \right|
        \,\mathrm{d}\mu(t)
\\
        &\leq
        C_\gamma2^{-h}\mu([0,1])
        \leq
        C_\gamma K2^{-h}.
\end{aligned}
\]
\end{proof}

By the cylinder identity \(\mu(I_u)=A_uZ_\infty^{(u)}\), we have
\[
\sum_{|u|=n+h}
\mu(I_u)\mathrm{e}^{i\phi_\xi(t_u)}
=
\sum_{|u|=n+h}
A_u\mathrm{e}^{i\phi_\xi(t_u)}
+
\sum_{|u|=n+h}
A_u(Z_\infty^{(u)}-1)\mathrm{e}^{i\phi_\xi(t_u)}.
\]

\begin{lemma}[Terminal descendant control on the grid]
\label{L:arc-terminal-descendant-control}
Fix \(K,K_1\in\mathbb{N}\), \(\varepsilon\in\mathbb{Q}_{>0}\), and \(h\in\mathbb{N}_0\).
Then
\[
\sup_{\xi\in\Lambda_n(K,\varepsilon)}
\left|
\sum_{|u|=n+h}
A_u
\bigl(Z_\infty^{(u)}-1\bigr)
\mathrm{e}^{i\phi_\xi(t_u)}
\right|
\longrightarrow0
\qquad
(n\to\infty)
\]
almost surely on \(\mathcal{Z}_{K_1} \cap \{Z_\infty>0\}\).
\end{lemma}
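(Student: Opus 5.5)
This lemma is a direct specialization of Theorem~\ref{T:terminal-descendants-planar}. I would apply that theorem with the fixed parameters \(K_1\) and \(h\), the deterministic grids \(\Lambda_n:=\Lambda_n(K,\varepsilon)\), and the coefficients
\[
\gamma_{u,n}(\xi):=\mathrm{e}^{i\phi_\xi(t_u)},\qquad |u|=n+h,\ \xi\in\Lambda_n .
\]
Its conclusion is then exactly the asserted convergence on \(\mathcal{Z}_{K_1}\cap\{Z_\infty>0\}\).

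The hypotheses need to be checked in order.

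\textbf{Grids.} By Lemma~\ref{L:arc-original-grid}, each \(\Lambda_n(K,\varepsilon)\) is a deterministic finite subset of the annulus \(\{2^n\le|\xi|<2^{n+1}\}\). Its cardinality is at most \(C_\Lambda 2^{2n}\), with
\[
C_\Lambda:=C_\gamma(1+K^2\varepsilon^{-2}),
\]
which is independent of \(n\).

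\textbf{Coefficients.} Since \(t_u\) is a fixed deterministic point of \(I_u\) and \(\phi_\xi(t)=-2\pi\xi\cdot\gamma(t)\) is deterministic, each \(\gamma_{u,n}(\xi)\) is a deterministic unimodular constant. It is therefore trivially \(\mathcal{F}_{n+h}\)-measurable and satisfies \(|\gamma_{u,n}(\xi)|\le1\).

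\textbf{Order of quantifiers.} The tuple \((K,K_1,\varepsilon,h)\) is fixed before the grid is chosen, and the grid depends only on \((K,\varepsilon)\) and \(\gamma\). The parameter set is countable (\(\varepsilon\) rational), so the exceptional null sets can be intersected into the common probability-one event fixed at the start of the section.

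There is no real obstacle: all probabilistic content (truncation at \(b/n\), conditional Bernstein, Borel--Cantelli over the \(O(2^{2n})\) grid, and the terminal cutoff terms \(T^{>}_{n,h}(b)\), \(B^{\mathrm{term}}_{n,h}(b)\)) is already in Theorem~\ref{T:terminal-descendants-planar}. The one point to state explicitly is that the coefficients are unaffected by the \(\xi\)-dependent cutoffs of Section~\ref{S:arc-geometry}. They involve only the raw phase at the atoms \(t_u\), so no Lipschitz or measurability issue arises from the moving decomposition.
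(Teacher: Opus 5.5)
Your proposal is correct and is essentially the paper's own proof. The paper likewise invokes Lemma~\ref{L:arc-original-grid} for the $O(2^{2n})$ cardinality bound on $\Lambda_n(K,\varepsilon)$, then applies Theorem~\ref{T:terminal-descendants-planar} with the deterministic unimodular coefficients $\gamma_{u,n}(\xi)=\mathrm{e}^{i\phi_\xi(t_u)}$.
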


\begin{proof}
By Lemma~\ref{L:arc-original-grid}, the grids \(\Lambda_n(K,\varepsilon)\) satisfy the cardinality hypothesis of
Theorem~\ref{T:terminal-descendants-planar}.
Apply Theorem~\ref{T:terminal-descendants-planar} with \(\gamma_{u,n}(\xi)=\mathrm{e}^{i\phi_\xi(t_u)}\).
These coefficients are deterministic and have modulus one.
\end{proof}

\begin{lemma}\label{L:arc-level-atomization}
Fix \(K_1\geq1\) and \(h\in\mathbb{N}_0\).  
Then, on \(\mathcal{Z}_{K_1}\), for every \(n\geq1\),
\[
        \left|
        \sum_{|u|=n+h}
        A_u\mathrm{e}^{i\phi_\xi(t_u)}
        -
        \int_0^1
        \mathrm{e}^{i\phi_\xi(t)}
        \,\mathrm{d}\mu_{n+h}(t)
        \right|
        \leq
        C_\gamma K_1 2^{-h}
\]
uniformly over \(2^n\leq|\xi|<2^{n+1}\).
\end{lemma}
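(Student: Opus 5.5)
The plan is to repeat the limiting-atomization argument of Lemma~\ref{L:arc-limiting-atomization}, now with the absolutely continuous level measure \(\mu_{n+h}\) in place of \(\mu\). By definition of the level measure,
\[
        \int_0^1 \mathrm{e}^{i\phi_\xi(t)}\,\mathrm{d}\mu_{n+h}(t)
        =
        \sum_{|u|=n+h} 2^{n+h}A_u\int_{I_u}\mathrm{e}^{i\phi_\xi(t)}\,\mathrm{d}t .
\]
Since \(2^{n+h}|I_u|=1\), each atom satisfies
\[
        A_u\mathrm{e}^{i\phi_\xi(t_u)}=2^{n+h}A_u\int_{I_u}\mathrm{e}^{i\phi_\xi(t_u)}\,\mathrm{d}t .
\]
Subtracting these expressions term by term gives
\[
        \left|
        \sum_{|u|=n+h}A_u\mathrm{e}^{i\phi_\xi(t_u)}
        -\int_0^1\mathrm{e}^{i\phi_\xi(t)}\,\mathrm{d}\mu_{n+h}(t)
        \right|
        \leq
        \sum_{|u|=n+h}2^{n+h}A_u\int_{I_u}
        \bigl|\mathrm{e}^{i\phi_\xi(t)}-\mathrm{e}^{i\phi_\xi(t_u)}\bigr|\,\mathrm{d}t .
\]

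Next I apply the phase estimate recorded just before Lemma~\ref{L:arc-limiting-atomization}. It rests only on the Lipschitz bound for \(\gamma\), \(|I_u|=2^{-n-h}\), and \(|\xi|<2^{n+1}\), and it gives \(|\phi_\xi(t)-\phi_\xi(t_u)|\leq C_\gamma2^{-h}\) for \(t\in I_u\), uniformly in \(2^n\leq|\xi|<2^{n+1}\). Combining this with \(|\mathrm{e}^{ia}-\mathrm{e}^{ib}|\leq|a-b|\), the right-hand side is at most
\[
        C_\gamma2^{-h}\sum_{|u|=n+h}A_u=C_\gamma2^{-h}Z_{n+h}.
\]

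Finally, on \(\mathcal{Z}_{K_1}\) we have \(Z_{n+h}\leq\sup_\ell Z_\ell\leq K_1\), which yields the bound \(C_\gamma K_1 2^{-h}\) uniformly over the annulus and for every \(n\geq1\).

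The argument is purely deterministic and has no real obstacle. The one point to check is that the constant \(C_\gamma\) does not depend on \(n\), \(h\), \(\xi\) or \(u\). This holds because \(2\pi|\xi|\,\mathrm{Lip}(\gamma)\,2^{-n-h}\leq 4\pi\,\mathrm{Lip}(\gamma)\,2^{-h}\).
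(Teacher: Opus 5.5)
Your proposal is correct and follows essentially the same route as the paper: you use \(\mu_{n+h}(I_u)=A_u\), apply the phase estimate \(|\phi_\xi(t)-\phi_\xi(t_u)|\leq C_\gamma2^{-h}\) cylinder by cylinder, and bound the total mass by \(Z_{n+h}\leq K_1\) on \(\mathcal{Z}_{K_1}\). Your explicit writing of each atom as an integral of a constant over \(I_u\) against the level density just spells out the step the paper compresses into one line.
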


\begin{proof}
Since \(\mu_{n+h}(I_u)=A_u\), the preceding phase estimate gives, on \(\mathcal{Z}_{K_1}\),
\[
\begin{aligned}
        \left|
        \sum_{|u|=n+h}
        A_u\mathrm{e}^{i\phi_\xi(t_u)}
        -
        \int_0^1
        \mathrm{e}^{i\phi_\xi(t)}
        \,\mathrm{d}\mu_{n+h}(t)
        \right|
        &\leq
        C_\gamma2^{-h}\mu_{n+h}([0,1])
\\
        &=
        C_\gamma2^{-h}Z_{n+h}
        \leq
        C_\gamma K_1 2^{-h}.
\end{aligned}
\]
\end{proof}

\subsection{Fixed-cutoff decomposition of the level integral}\label{SS:arc-assembly-fixed-cutoff}
For integers \(h\geq0\), \(j,n\geq1\), and \(\xi\in\mathbb{R}^2\setminus\{0\}\) satisfying \(2^n\leq|\xi|<2^{n+1}\), 
Proposition~\ref{P:arc-fixed-cutoff-package} gives
\[
1=
\Psi_{\xi,j}^{\mathrm{safe}}
+
\sum_{d\in \mathfrak D_{\xi,j}^{\mathrm{osc}}}\chi_{\xi,d}.
\]
Thus
\[
        \int_0^1
        \mathrm{e}^{i\phi_\xi(t)}
        \,\mathrm{d}\mu_{n+h}(t)
        =
        E_{\xi,n,h,j}^{\mathrm{safe}}
        +
        \sum_{d\in \mathfrak D_{\xi,j}^{\mathrm{osc}}}
        \int_0^1
        \mathrm{e}^{i\phi_\xi(t)}
        \chi_{\xi,d}(t)
        \,\mathrm{d}\mu_{n+h}(t),
\]
where
\[
        E_{\xi,n,h,j}^{\mathrm{safe}}
        \coloneqq
        \int_0^1
        \mathrm{e}^{i\phi_\xi(t)}
        \Psi_{\xi,j}^{\mathrm{safe}}(t)
        \,\mathrm{d}\mu_{n+h}(t).
\]
The safe term satisfies
\[
        |E_{\xi,n,h,j}^{\mathrm{safe}}|
        \leq
        \mu_{n+h}
        \bigl(\operatorname{spt}\Psi_{\xi,j}^{\mathrm{safe}}\bigr).
\]

\begin{lemma}
\label{L:arc-safe-region-control}
Fix an integer \(h\geq0\).  
Almost surely on non-extinction, for every integer \(j\geq1\),
\[
        \limsup_{n\to\infty}
        \sup_{2^n\leq|\xi|<2^{n+1}}
        |E_{\xi,n,h,j}^{\mathrm{safe}}|
        \leq
        C_\gamma\Delta_{C_\gamma2^{-j}}(\mu).
\]
Consequently,
\[
        \lim_{j\to\infty}
        \limsup_{n\to\infty}
        \sup_{2^n\leq|\xi|<2^{n+1}}
        |E_{\xi,n,h,j}^{\mathrm{safe}}|
        =
        0
\]
almost surely on non-extinction.
\end{lemma}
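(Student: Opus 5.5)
The lemma follows almost immediately from the pointwise bound recorded just before it and from the safe-region part of Proposition~\ref{P:arc-fixed-cutoff-package}. First, for each \(\xi\) with \(2^n\leq|\xi|<2^{n+1}\), the function \(\Psi_{\xi,j}^{\mathrm{safe}}\) is a sum of nonnegative cutoffs that form part of the partition of unity in Proposition~\ref{P:arc-endpoint-safe-partition}. Hence \(0\leq\Psi_{\xi,j}^{\mathrm{safe}}\leq 1\) and it vanishes off its support. Since \(\mu_{n+h}\) is a positive measure, this gives
\[
|E_{\xi,n,h,j}^{\mathrm{safe}}|\leq \int_0^1\Psi_{\xi,j}^{\mathrm{safe}}\,\mathrm d\mu_{n+h}\leq \mu_{n+h}\bigl(\operatorname{spt}\Psi_{\xi,j}^{\mathrm{safe}}\bigr).
\]
This bound is deterministic and uniform in \(\xi\), so taking the supremum over the annulus is harmless. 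No Lipschitz control in \(\xi\) of the moving cutoffs is needed.

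Next, fix \(h\) and invoke Lemma~\ref{L:arc-small-interval-transfer}, equivalently the safe-region clause of Proposition~\ref{P:arc-fixed-cutoff-package}. It supplies a single probability-one event on non-extinction on which, for each \(j\geq1\),
\[
\limsup_{n\to\infty}\sup_{2^n\leq|\xi|<2^{n+1}}\mu_{n+h}\bigl(\operatorname{spt}\Psi_{\xi,j}^{\mathrm{safe}}\bigr)\leq C_\gamma\Delta_{C_\gamma2^{-j}}(\mu).
\]
That event is the weak-convergence event \(\mu_{n+h}\Rightarrow\mu\) intersected with non-extinction. Its underlying argument uses the covering of the safe support by \(C_\gamma\) intervals of length \(C_\gamma(2^{-j}+2^{-n/2})\) (Proposition~\ref{P:arc-safe-support-and-bands}) together with Lemma~\ref{L:preliminaries-small-interval-modulus}. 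The event does not depend on \(j\): the construction is deterministic in \(j\), and there are only countably many \(j\) in any case. Combining this with the first step gives the first assertion simultaneously for all \(j\).

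For the consequence, on non-extinction \(\mu\) is atomless by Fact~\ref{F:preliminaries-standard-cascade-facts}(5). The first part of Lemma~\ref{L:preliminaries-small-interval-modulus} then gives \(\Delta_{C_\gamma2^{-j}}(\mu)\to0\) as \(j\to\infty\). Since the quantity \(\limsup_n\sup_\xi|E^{\mathrm{safe}}_{\xi,n,h,j}|\) is nonnegative and bounded above by a sequence tending to zero, its limit in \(j\) exists and equals zero.

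None of these steps is a genuine obstacle; the substantive work was already done in Lemma~\ref{L:arc-small-interval-transfer}. The only point needing care is the bookkeeping of null sets. The probability-one event must be fixed before quantifying over \(j\), which is possible because the only random inputs are the single convergence \(\mu_{n+h}\Rightarrow\mu\) and atomlessness, and neither depends on \(j\).
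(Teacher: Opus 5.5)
Your proposal is correct and follows the paper's proof: the pointwise bound \(|E^{\mathrm{safe}}_{\xi,n,h,j}|\le\mu_{n+h}(\operatorname{spt}\Psi^{\mathrm{safe}}_{\xi,j})\), combined with Lemma~\ref{L:arc-small-interval-transfer}, gives the first assertion, and atomlessness of \(\mu\) on non-extinction together with Lemma~\ref{L:preliminaries-small-interval-modulus} gives the second. Your justification that \(0\le\Psi^{\mathrm{safe}}_{\xi,j}\le1\), and your remark that the null set can be fixed independently of \(j\), are accurate details that the paper leaves implicit.
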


\begin{proof}
The first assertion follows from the preceding estimate and
Lemma~\ref{L:arc-small-interval-transfer}.
The second follows from the atomlessness of \(\mu\) on non-extinction
and Lemma~\ref{L:preliminaries-small-interval-modulus}.
\end{proof}

\subsection{Martingale skeleton}
\label{SS:arc-assembly-skeleton}

For \(d\in \mathfrak D_{\xi,j}^{\mathrm{osc}}\), \(\ell\geq0\), and 
\(J\in\mathcal{D}_{\ell+1}([0,1])\), recall that
\[
        c_J(\xi,d,\ell)
        =
        2^\ell
        \int_J
        \mathrm{e}^{i\phi_\xi(t)}
        \chi_{\xi,d}(t)
        \,\mathrm{d}t.
\]

\begin{lemma}[Finite-level martingale identity]\label{L:arc-finite-level-martingale-identity}
For integers \(n,j\geq1\) and \(h\geq0\), every \(\xi\in\mathbb{R}^2\setminus\{0\}\), 
and every \(d\in \mathfrak D_{\xi,j}^{\mathrm{osc}}\), one has
\[
        \int_0^1
        \mathrm{e}^{i\phi_\xi(t)}\chi_{\xi,d}(t)
        \,\mathrm{d}\mu_{n+h}(t)
        =
        \int_0^1
        \mathrm{e}^{i\phi_\xi(t)}\chi_{\xi,d}(t)
        \,\mathrm{d}t
        +
        \sum_{\ell=0}^{n+h-1}
        \sum_{J\in\mathcal{D}_{\ell+1}([0,1])}
        A_{J^-}(W_J-1)c_J(\xi,d,\ell).
\]
\end{lemma}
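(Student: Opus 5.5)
The plan is to telescope the level measures and then use the piecewise-constant structure of the densities. Since \(\mu_0\) is Lebesgue measure on \([0,1]\), we have \(\mu_{n+h}=\mu_0+\sum_{\ell=0}^{n+h-1}(\mu_{\ell+1}-\mu_\ell)\). Integrating the bounded continuous function \(g(t):=\mathrm e^{i\phi_\xi(t)}\chi_{\xi,d}(t)\) against both sides gives
\[
\int_0^1 g\,\mathrm d\mu_{n+h}=\int_0^1 g(t)\,\mathrm dt+\sum_{\ell=0}^{n+h-1}\int_0^1 g\,\mathrm d(\mu_{\ell+1}-\mu_\ell).
\]
This is a finite sum of absolutely continuous signed measures, so there is nothing to justify beyond linearity.

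It remains to identify each increment. For \(J\in\mathcal D_{\ell+1}([0,1])\) with parent \(J^-\in\mathcal D_\ell([0,1])\), the density of \(\mu_{\ell+1}\) on \(J\) is \(2^{\ell+1}A_J=2^{\ell+1}\cdot\tfrac12A_{J^-}W_J=2^\ell A_{J^-}W_J\). The density of \(\mu_\ell\) on \(J\subset J^-\) is \(2^\ell A_{J^-}\). The children of generation \(\ell+1\) partition \([0,1]\) up to endpoints, which are Lebesgue-null. Hence
\[
\mathrm d(\mu_{\ell+1}-\mu_\ell)(t)=\sum_{J\in\mathcal D_{\ell+1}([0,1])}2^\ell A_{J^-}(W_J-1)\mathbf 1_J(t)\,\mathrm dt.
\]
This is the same identity already used in the interval proof. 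For \(\ell=0\) it reads \(A_{\varnothing}=1\) and \(J^-=I_\varnothing\), so no special case arises.

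Integrating \(g\) against this expression gives \(\sum_J A_{J^-}(W_J-1)\,2^\ell\int_J g\,\mathrm dt=\sum_J A_{J^-}(W_J-1)c_J(\xi,d,\ell)\) by the definition of \(c_J\). Substituting into the telescoped identity yields the claim. The only points to check are the endpoint convention for \(I_u\), which the paper fixes so that each \(\mathcal D_{\ell+1}\) is an exact partition, and the fact that the case \(|\xi|<16\) is vacuous because \(\mathfrak D_{\xi,j}^{\mathrm{osc}}=\emptyset\) there. No step poses a real obstacle; the bookkeeping of the parent–child density relation \(A_J=\tfrac12 A_{J^-}W_J\) is the only place an error could enter.
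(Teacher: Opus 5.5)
Your proposal is correct and follows the paper's proof essentially step for step: telescope from the Lebesgue level \(\mu_0\), identify the increment density \(2^\ell A_{J^-}(W_J-1)\) on each \(J\in\mathcal{D}_{\ell+1}([0,1])\) via \(A_J=\frac12A_{J^-}W_J\), and integrate \(\mathrm{e}^{i\phi_\xi}\chi_{\xi,d}\) to obtain \(c_J(\xi,d,\ell)\). Your remarks on the endpoint convention and the vacuous case \(|\xi|<16\) are harmless additions that the paper leaves implicit.
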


\begin{proof}
By telescoping and since \(\mu_0\) is Lebesgue measure on \([0,1]\),
\[
\begin{aligned}
        \int_0^1
        \mathrm{e}^{i\phi_\xi(t)}
        \chi_{\xi,d}(t)
        \,\mathrm{d}\mu_{n+h}(t)
        &=
        \int_0^1
        \mathrm{e}^{i\phi_\xi(t)}
        \chi_{\xi,d}(t)
        \,\mathrm{d}t
\\
        &\quad+
        \sum_{\ell=0}^{n+h-1}
        \int_0^1
        \mathrm{e}^{i\phi_\xi(t)}
        \chi_{\xi,d}(t)
        \,\mathrm{d}(\mu_{\ell+1}-\mu_\ell)(t).
\end{aligned}
\]
Fix \(0\leq\ell\leq n+h-1\) and \(J\in\mathcal{D}_{\ell+1}([0,1])\), 
and let \(J^-\in\mathcal{D}_{\ell}([0,1])\) be its parent.  
The level-\(\ell\) density on \(J\) is \(2^\ell A_{J^-}\).
Since \(A_J=\frac12A_{J^-}W_J\), the level-\((\ell+1)\) density on \(J\) is \(2^\ell A_{J^-}W_J\).
Hence, as an identity of densities,
\[
        \mathrm{d}(\mu_{\ell+1}-\mu_\ell)(t)
        =
        \sum_{J\in\mathcal{D}_{\ell+1}([0,1])}
        2^\ell A_{J^-}(W_J-1)
        \mathbf{1}_J(t)\,\mathrm{d}t.
\]
Therefore
\[
\begin{aligned}
        \int_J\mathrm{e}^{i\phi_\xi(t)}\chi_{\xi,d}(t)\,\mathrm{d}(\mu_{\ell+1}-\mu_\ell)(t)
        & =
        A_{J^-}(W_J-1)
        2^\ell
        \int_J\mathrm{e}^{i\phi_\xi(t)}\chi_{\xi,d}(t)\,\mathrm{d}t
\\
        & =
        A_{J^-}(W_J-1)c_J(\xi,d,\ell).
\end{aligned}
\]
Summing over all \(J\in\mathcal{D}_{\ell+1}([0,1])\) and then over \(0\leq\ell\leq n+h-1\) proves the identity.
\end{proof}

We now sum the finite-level martingale identity over
\(
        d\in \mathfrak D_{\xi,j}^{\mathrm{osc}}.
\)
For \(n,j\geq1\) and \(2^n\leq|\xi|<2^{n+1}\), define the deterministic forcing by
\[
        D_{\xi,n,j}^{(0)}
        =
        \sum_{d\in \mathfrak D_{\xi,j}^{\mathrm{osc}}}
        \int_0^1
        \mathrm{e}^{i\phi_\xi(t)}
        \chi_{\xi,d}(t)
        \,\mathrm{d}t.
\]
Proposition~\ref{P:arc-fixed-cutoff-package} gives
\begin{equation}
\label{E:arc-deterministic-forcing-bound}
        \sup_{2^n\leq|\xi|<2^{n+1}}
        \left|
        D_{\xi,n,j}^{(0)}
        \right|
        \leq
        C_{\gamma,j}2^{-n}.
\end{equation}

Define the oscillatory skeleton
\[
        S_{\xi,j,n,h}
        =
        \sum_{d\in \mathfrak D_{\xi,j}^{\mathrm{osc}}}
        \sum_{\ell=0}^{n+h-1}
        \sum_{J\in\mathcal{D}_{\ell+1}([0,1])}
        A_{J^-}(W_J-1)c_J(\xi,d,\ell).
\]
The finite-level identity therefore yields
\[
\sum_{d\in \mathfrak D_{\xi,j}^{\mathrm{osc}}} \int_0^1 \mathrm{e}^{i\phi_\xi(t)} \chi_{\xi,d}(t) \,\mathrm{d}\mu_{n+h}(t) = D_{\xi,n,j}^{(0)} + S_{\xi,j,n,h}.
\]

\begin{lemma}[Oscillatory skeleton control on the grid]
\label{L:arc-oscillatory-skeleton-control}
Fix \(K,K_1\in\mathbb{N}\), \(\varepsilon\in\mathbb{Q}_{>0}\), and integers \(j\geq1\), \(h\geq0\). 
Then
\[
        \sup_{\xi\in\Lambda_n(K,\varepsilon)}
        |S_{\xi,j,n,h}|
        \longrightarrow0
\]
almost surely on \(\mathcal{Z}_{K_1}\).
\end{lemma}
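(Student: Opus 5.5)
The plan is to obtain this lemma as a direct application of Theorem~\ref{T:abstract-stopped-skeleton}, with the hypotheses supplied by Proposition~\ref{P:arc-fixed-cutoff-package}. First reindex the skeleton by generation: with \(k=\ell+1\) and \(J\in\mathcal{D}_k([0,1])\), define
\[
\Gamma_J(\xi,d):=A_{J^-}c_J(\xi,d,k-1),
\]
so that
\[
S_{\xi,j,n,h}=\sum_{d\in\mathfrak D_{\xi,j}^{\mathrm{osc}}}\sum_{k=1}^{n+h}\sum_{J\in\mathcal{D}_k([0,1])}\Gamma_J(\xi,d)(W_J-1).
\]
This is exactly \(\mathcal S_n(\xi)\) from the abstract theorem, with \(\mathcal B_{\xi,n}:=\mathfrak D_{\xi,j}^{\mathrm{osc}}\) and \(\Lambda_n:=\Lambda_n(K,\varepsilon)\). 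The set \(\mathcal B_{\xi,n}\) is deterministic, since it depends only on \(\xi\), \(\gamma\), and the fixed cutoffs.

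Next, check the hypotheses one at a time, with \(j,h,K,\varepsilon\) fixed.

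\emph{Grid cardinality.} Lemma~\ref{L:arc-original-grid} gives \(\#\Lambda_n(K,\varepsilon)\leq C_\Lambda 2^{2n}\) with \(C_\Lambda=C_\gamma(1+K^2\varepsilon^{-2})\). The grid is also nonempty and lies in the annulus.

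\emph{Band count and phase shift.} Proposition~\ref{P:arc-fixed-cutoff-package} gives \(\#\mathfrak D_{\xi,j}^{\mathrm{osc}}\leq C_{\gamma,j}=:M_0\) and \(|m_{\xi,d}-n|\leq C_{\gamma,j}=:C_0\) for all \(\xi\) in the annulus. Both constants are uniform in \(n\) and \(\xi\) because \(j\) is fixed.

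\emph{Coefficient bounds.} The final part of the same proposition states that \(\Gamma_J(\xi,d)\) is \(\mathcal F_{k-1}\)-measurable. It also gives the parent-mass and child-mass bounds
\[
|\Gamma_J(\xi,d)|\leq C_\gamma A_{J^-}\,\omega_{k,m_{\xi,d}},\qquad |\Gamma_J(\xi,d)|W_J\leq C_\gamma A_J\,\omega_{k,m_{\xi,d}},
\]
so one may take \(C_\Gamma:=C_\gamma\). These follow from the integration-by-parts estimate \(|c_J(\xi,d,\ell)|\leq C_\gamma\min\{1,2^{\ell-m_{\xi,d}}\}\) together with \(A_{J^-}W_J=2A_J\). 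The shift from \(\ell\) to \(k=\ell+1\) costs only a factor \(2\) in \(\omega\), which is absorbed into \(C_\gamma\).

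With these in place, Theorem~\ref{T:abstract-stopped-skeleton} yields \(\sup_{\xi\in\Lambda_n}|S_{\xi,j,n,h}|\to0\) almost surely on \(\mathcal Z_{K_1}\). The exceptional null set depends on \((K,K_1,\varepsilon,j,h)\), which range over a countable set, so the common event fixed at the start of Section~\ref{S:arc-assembly} suffices.

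The only point needing real care is the uniformity of the constants. Theorem~\ref{T:abstract-stopped-skeleton} requires fixed \(M_0\), \(C_0\), and \(C_\Gamma\) that do not depend on \(n\). This is exactly why the fixed cutoff \(j\) is used (cf.\ Remark~\ref{R:arc-fixed-cutoff-necessity}): \(M_0\) and \(C_0\) depend on \(j\) but not on \(n\). I also need to confirm that the deterministic, \(\xi\)-dependent choice of bands does not interfere with the predictability used in the Freedman step. It does not, because all of \(\xi\), \(d\), and \(c_J\) are deterministic, and the only randomness in \(\Gamma_J\) is \(A_{J^-}\), which is \(\mathcal F_{k-1}\)-measurable.
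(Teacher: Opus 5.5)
Your proposal is correct and follows the paper's proof: you set \(\Gamma_J(\xi,d)=A_{J^-}c_J(\xi,d,k-1)\), verify the parent-mass and child-mass bounds together with the band-count, phase-shift and grid-cardinality hypotheses, and then apply Theorem~\ref{T:abstract-stopped-skeleton}. One small correction: the index shift from \(\ell\) to \(k=\ell+1\) costs nothing at all, because \(\min\{1,2^{k-1-m}\}\leq\min\{1,2^{k-m}\}\), so no factor of \(2\) needs to be absorbed.
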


\begin{proof}
For \(1\leq k\leq n+h\), \(J\in\mathcal{D}_k([0,1])\), and \(d\in \mathfrak D_{\xi,j}^{\mathrm{osc}}\), 
define \(\Gamma_J(\xi,d) = A_{J^-}c_J(\xi,d,k-1)\).
Then \(\Gamma_J(\xi,d)\) is \(\mathcal F_{k-1}\)-measurable, and
Proposition~\ref{P:arc-coefficient-estimates} gives
\[
        |\Gamma_J(\xi,d)|
        \leq
        C_\gamma A_{J^-}
        \min\{1,2^{k-m_{\xi,d}}\}.
\]
Using \(A_J=\frac{1}{2}A_{J^-}W_J\),
we also obtain
\[
        |\Gamma_J(\xi,d)|W_J
        \leq
        C_\gamma
        A_J
        \min\{1,2^{k-m_{\xi,d}}\},
\]
after enlarging \(C_\gamma\) if necessary.

For fixed \(j\), Proposition~\ref{P:arc-safe-support-and-bands} gives
\[
        \#\mathfrak D_{\xi,j}^{\mathrm{osc}}
        \leq
        C_{\gamma,j},
        \qquad
        |m_{\xi,d}-n|
        \leq
        C_{\gamma,j}.
\]
Lemma~\ref{L:arc-original-grid} gives the required planar-grid cardinality bound.

Thus Theorem~\ref{T:abstract-stopped-skeleton} applies with \(\Lambda_n=\Lambda_n(K,\varepsilon)\) 
and \(\mathcal{B}_{\xi,n}=\mathfrak D_{\xi,j}^{\mathrm{osc}}\), 
with \(M_0\) and \(C_0\) depending only on \(\gamma\) and \(j\).
The asserted convergence follows.
\end{proof}

\subsection{Full annular decomposition}
\label{SS:arc-assembly-full-decomposition}

\begin{lemma}
\label{L:arc-full-annular-decomposition}
Fix \(K,K_1\in\mathbb{N}\), \(\varepsilon\in\mathbb{Q}_{>0}\), \(j\in\mathbb{N}\), and \(h\in\mathbb{N}_0\).
Then, for every \(n\geq1\) and \(\xi\in\Lambda_n(K,\varepsilon)\), one has
\[
\begin{aligned}
\widehat{\mu_\gamma}(\xi)
&=
\left[
\widehat{\mu_\gamma}(\xi)
-
\sum_{|u|=n+h}
\mu(I_u)\mathrm{e}^{i\phi_\xi(t_u)}
\right]
+
\sum_{|u|=n+h}
A_u
\bigl(Z_\infty^{(u)}-1\bigr)
\mathrm{e}^{i\phi_\xi(t_u)}
\\
&\quad+
\left[
\sum_{|u|=n+h}
A_u\mathrm{e}^{i\phi_\xi(t_u)}
-
\int_0^1
\mathrm{e}^{i\phi_\xi(t)}
\,\mathrm{d}\mu_{n+h}(t)
\right]
+
E_{\xi,n,h,j}^{\mathrm{safe}}
+
D_{\xi,n,j}^{(0)}
+
S_{\xi,j,n,h}.
\end{aligned}
\]

Moreover, almost surely on
\(\Omega_K \cap \mathcal{Z}_{K_1} \cap \{Z_\infty>0\}\), one has
\[
\limsup_{n\to\infty}
\sup_{\xi\in\Lambda_n(K,\varepsilon)}
\left|
\widehat{\mu_\gamma}(\xi)
\right|
\leq
C_\gamma(K+K_1)2^{-h}
+
C_\gamma
\Delta_{C_\gamma2^{-j}}(\mu).
\]
\end{lemma}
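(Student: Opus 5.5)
The identity is purely algebraic. Start from the partition identity of Proposition~\ref{P:arc-fixed-cutoff-package}, which splits the level integral as
\[
\int_0^1 \mathrm{e}^{i\phi_\xi}\,\mathrm d\mu_{n+h}
= E_{\xi,n,h,j}^{\mathrm{safe}} + D_{\xi,n,j}^{(0)} + S_{\xi,j,n,h}.
\]
This uses the summed finite-level martingale identity of Lemma~\ref{L:arc-finite-level-martingale-identity}. Next, write $\widehat{\mu_\gamma}(\xi)$ as a telescoping sum. Add and subtract $\sum_{|u|=n+h}\mu(I_u)\mathrm{e}^{i\phi_\xi(t_u)}$, split this sum via $\mu(I_u)=A_uZ_\infty^{(u)}$ into $\sum A_u\mathrm{e}^{i\phi_\xi(t_u)}$ and the terminal descendant sum, then add and subtract the level integral. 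Every intermediate term cancels, which gives the displayed six-term decomposition for every $\xi\neq0$, and in particular for every $\xi\in\Lambda_n(K,\varepsilon)$. This holds on the common probability-one event fixed at the start of the section, where the cylinder identity is available.

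For the limsup bound, work on $\Omega_K\cap\mathcal Z_{K_1}\cap\{Z_\infty>0\}$ intersected with the probability-one events from the cited results, for the fixed countable parameters $K,K_1,\varepsilon,j,h$. Take the supremum over $\xi\in\Lambda_n(K,\varepsilon)$ term by term.
\begin{itemize}
\item The first bracket is at most $C_\gamma K2^{-h}$ by Lemma~\ref{L:arc-limiting-atomization}. The bound is uniform over the whole annulus, so it holds in particular on the grid.
\item The terminal descendant sum tends to $0$ by Lemma~\ref{L:arc-terminal-descendant-control}.
\item The level-atomization bracket is at most $C_\gamma K_12^{-h}$ by Lemma~\ref{L:arc-level-atomization}.
\item The safe term has $\limsup$ at most $C_\gamma\Delta_{C_\gamma2^{-j}}(\mu)$ by Lemma~\ref{L:arc-safe-region-control}. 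Its supremum is over the whole annulus, which contains the grid.
\item The deterministic forcing is at most $C_{\gamma,j}2^{-n}\to0$ by \eqref{E:arc-deterministic-forcing-bound}.
\item The oscillatory skeleton tends to $0$ by Lemma~\ref{L:arc-oscillatory-skeleton-control}.
\end{itemize}
Summing these bounds and using the subadditivity of $\limsup$ gives the stated estimate, after merging the constants into $C_\gamma$.

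The only point needing care is bookkeeping of the exceptional null sets. The safe-region lemma holds almost surely on non-extinction with a single event valid for all $j$ (for fixed $h$). The terminal and skeleton lemmas hold almost surely on $\mathcal Z_{K_1}(\cap\{Z_\infty>0\})$ for each fixed parameter tuple. Since all parameters range over countable sets, intersecting these events yields one probability-one event on which the bound holds simultaneously, as the statement requires. No analytic obstacle remains: all the difficulty was absorbed into the earlier lemmas.
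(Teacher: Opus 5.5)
Your proposal is correct and follows the paper's proof. The identity comes from the same telescoping through the cylinder identity $\mu(I_u)=A_uZ_\infty^{(u)}$, the fixed-cutoff partition, and the summed finite-level martingale identity, and the $\limsup$ bound comes from the same term-by-term use of Lemmas~\ref{L:arc-limiting-atomization}, \ref{L:arc-terminal-descendant-control}, \ref{L:arc-level-atomization}, \ref{L:arc-safe-region-control}, \eqref{E:arc-deterministic-forcing-bound}, and Lemma~\ref{L:arc-oscillatory-skeleton-control}; your explicit intersection of the countably many exceptional events only makes explicit what the paper leaves implicit.
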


\begin{proof}
The displayed decomposition follows directly from the descendant identity
\[
\mu(I_u) = A_uZ_\infty^{(u)},
\]
the atomization constructions in Subsection~\ref{SS:arc-assembly-atomization}, 
the fixed-cutoff decomposition in
Subsection~\ref{SS:arc-assembly-fixed-cutoff}, 
and the summed finite-level martingale identity in Subsection~\ref{SS:arc-assembly-skeleton}.

By Lemmas~\ref{L:arc-limiting-atomization} and \ref{L:arc-level-atomization}, 
the first and third bracketed terms are bounded by \(C_\gamma K2^{-h}\) on \(\Omega_K\) 
and by \(C_\gamma K_1 2^{-h}\) on \(\mathcal{Z}_{K_1}\), respectively, uniformly over the grid.
By Lemma~\ref{L:arc-terminal-descendant-control},
\[
\sup_{\xi\in\Lambda_n(K,\varepsilon)}
\left|
\sum_{|u|=n+h}
A_u
\bigl(Z_\infty^{(u)}-1\bigr)
\mathrm{e}^{i\phi_\xi(t_u)}
\right|
\longrightarrow0
\qquad
(n\to\infty)
\]
almost surely on
\(
        \mathcal{Z}_{K_1}\cap\{Z_\infty>0\}.
\)

Furthermore, Lemma~\ref{L:arc-safe-region-control} gives
\[
        \limsup_{n\to\infty}
        \sup_{\xi\in\Lambda_n(K,\varepsilon)}
        |E_{\xi,n,h,j}^{\mathrm{safe}}|
        \leq
        C_\gamma\Delta_{C_\gamma2^{-j}}(\mu), 
\]
while \eqref{E:arc-deterministic-forcing-bound} gives
\[
        \sup_{\xi\in\Lambda_n(K,\varepsilon)}
        \left|
        D_{\xi,n,j}^{(0)}
        \right|
        \leq
        C_{\gamma,j}2^{-n}.
\]
Finally, Lemma~\ref{L:arc-oscillatory-skeleton-control} gives
\[
        \sup_{\xi\in\Lambda_n(K,\varepsilon)}
        \left|
        S_{\xi,j,n,h}
        \right|
        \longrightarrow0
\]
almost surely on \(\mathcal{Z}_{K_1}\).

Taking the supremum over \(\xi\in\Lambda_n(K,\varepsilon)\), 
followed by the limit superior as \(n\to\infty\), proves the assertion.
\end{proof}

\subsection{Conclusion of the annular argument}
\label{SS:arc-assembly-conclusion}
\begin{proof}[Proof of Theorem~\ref{T:main-arc-rajchman}]
Fix \(K,K_1\in\mathbb{N}\) and \(\varepsilon\in\mathbb{Q}_{>0}\), and work on
\(
\Omega_K\cap\mathcal{Z}_{K_1}\cap\{Z_\infty>0\}.
\)

Since \(\mu\) is atomless on non-extinction,
Lemma~\ref{L:preliminaries-small-interval-modulus} gives
\[
        \Delta_{C_\gamma2^{-j}}(\mu)
        \longrightarrow0
        \qquad(j\to\infty).
\]
For every pair of integers \(j\geq1\) and \(h\geq0\),
Lemma~\ref{L:arc-full-annular-decomposition} yields
\[
\limsup_{n\to\infty} \sup_{\xi\in\Lambda_n(K,\varepsilon)} |\widehat{\mu_\gamma}(\xi)| 
\leq 
C_\gamma(K+K_1)2^{-h} +
C_\gamma \Delta_{C_\gamma2^{-j}}(\mu).
\]
The left-hand side is independent of \(j\) and \(h\).
Letting first \(j\to\infty\) and then \(h\to\infty\), we obtain
\[
        \sup_{\xi\in\Lambda_n(K,\varepsilon)}
        |\widehat{\mu_\gamma}(\xi)|
        \longrightarrow0
        \qquad(n\to\infty).
\]
Hence, for all sufficiently large \(n\),
\[
        \sup_{\xi\in\Lambda_n(K,\varepsilon)}
        |\widehat{\mu_\gamma}(\xi)|
        <
        \frac{31}{32}\varepsilon.
\]

If \(S_{\gamma,n}>\varepsilon\), however, Lemma~\ref{L:arc-original-grid} gives 
\(\xi\in\Lambda_n(K,\varepsilon)\) such that
\(
|\widehat{\mu_\gamma}(\xi)| > \frac{31}{32}\varepsilon,
\)
a contradiction. 
Thus \(S_{\gamma,n}\leq\varepsilon\) for all sufficiently large \(n\) on
\(
\Omega_K\cap\mathcal{Z}_{K_1}\cap\{Z_\infty>0\}.
\)

Since the preceding almost-sure statements were arranged simultaneously for all \(\varepsilon\in\mathbb{Q}_{>0}\), 
it follows that
\[
S_{\gamma,n} \longrightarrow0 \qquad(n\to\infty)
\]
on
\(
\Omega_K\cap\mathcal{Z}_{K_1}\cap\{Z_\infty>0\}.
\)

Finally,
\(
        \bigcup_{K,K_1\in\mathbb{N}}
        \bigl(\Omega_K\cap\mathcal{Z}_{K_1}\bigr)
\)
has probability one. Therefore, almost surely on
\(\{Z_\infty>0\}=\{\mu([0,1])>0\}\),
\[
        \widehat{\mu_\gamma}(\xi)
        \longrightarrow0
        \qquad(|\xi|\to\infty).
\]
\end{proof}

\section{Jordan curves and pure Rajchman consequences}
\label{S:jordan-cutting}

This section derives the fixed-Jordan-curve theorem from the fixed-arc theorem and records the pure Rajchman consequences in the heavy-tail regime. 
No further oscillatory analysis is needed.

\begin{proof}[Proof of Theorem~\ref{T:main-jordan-rajchman}]
For \(i=0,1\), set \(\Gamma_i=\Gamma\circ\rho_i\).
As observed in Subsection~\ref{SS:preliminaries-curves}, each
\(\Gamma_i\) is a fixed nondegenerate \(C^2\) embedded arc. 
Moreover, Lemma~\ref{L:first-generation-cutting}, applied with \(F=\Gamma\),
gives
\[
        \Gamma_\#\widetilde\mu^{\mathbb T}
        =
        \frac{W_0^{\mathbb T}}2
        (\Gamma_0)_\#\mu^{\mathbb T,(0)}
        +
        \frac{W_1^{\mathbb T}}2
        (\Gamma_1)_\#\mu^{\mathbb T,(1)}.
\]

For each \(i=0,1\), the measure \(\mu^{\mathbb T,(i)}\) is a copy in law of the interval cascade \(\mu\). 
Therefore, Theorem~\ref{T:main-arc-rajchman} gives
\[
        \widehat{(\Gamma_i)_\#\mu^{\mathbb T,(i)}}(\xi)
        \longrightarrow0
        \qquad (|\xi|\to\infty)
\]
almost surely on \(\{\mu^{\mathbb T,(i)}([0,1])>0\}\).
On the complementary extinction event, \(\mu^{\mathbb T,(i)}\) is the zero measure, 
so the same conclusion is trivial. 
Hence, almost surely, the displayed convergence holds simultaneously for \(i=0,1\).

Since \(W_0^{\mathbb T}\) and \(W_1^{\mathbb T}\) are finite almost surely, 
linearity of the Fourier transform yields
\[
\widehat{\Gamma_\#\widetilde\mu^{\mathbb T}}(\xi)
        =
        \frac{W_0^{\mathbb T}}2
        \widehat{(\Gamma_0)_\#\mu^{\mathbb T,(0)}}(\xi)
        +
        \frac{W_1^{\mathbb T}}2
        \widehat{(\Gamma_1)_\#\mu^{\mathbb T,(1)}}(\xi)
        \longrightarrow0
\]
as \(|\xi|\to\infty\). 
In particular, the conclusion holds almost surely on \(\{\widetilde\mu^{\mathbb T}(\mathbb T)>0\}\).
\end{proof}

The three Rajchman theorems are now established. 
By \cite[Corollary~6.2]{CaiChengFangLiQuXiao2026} and \cite[Theorems~1.2 and~1.4]{CaiFangQu2026Curves}, 
the corresponding exact Fourier-dimension formulas hold under the standing minimal Kahane--Peyri\`ere assumptions. 
These external formulas are used only in the following argument and play no role in the proofs of the Rajchman theorems.

For completeness, set
\[
        D_{\mathrm{int}}(W)
        :=
        \sup_{1<q<2}
        \max\left\{
        0,
        2-\frac{2}{q}
        \bigl(1+\log_2\E[W^q]\bigr)
        \right\}
\]
and
\[
        A_{\mathrm{loc}}(W)
        :=
        \sup_{q>1}
        \max\left\{
        0,
        \frac{q-1-\log_2\E[W^q]}{q}
        \right\},
\]
where, in both definitions, the term indexed by \(q\) is interpreted as zero whenever \(\E[W^q]=\infty\). 
Under the standing minimal Kahane--Peyri\`ere assumptions, the exact Fourier-dimension formulas give, 
almost surely on the corresponding non-extinction events,
\[
\dimF\mu = D_{\mathrm{int}}(W),
\]
and, for every prescribed fixed nondegenerate \(C^2\) embedded arc \(\gamma\) and every prescribed fixed nondegenerate \(C^2\) Jordan curve \(\Gamma\),
\[
        \dimF(\gamma_\#\mu)
        =
        \dimF\bigl(
                \Gamma_\#\widetilde{\mu}^{\T}
        \bigr)
        =
        A_{\mathrm{loc}}(W).
\]

\begin{proof}[Proof of Corollary~\ref{C:main-pure-rajchman}]
Under \eqref{E:intro-no-higher-moments}, every term in the suprema defining \(D_{\mathrm{int}}(W)\) and \(A_{\mathrm{loc}}(W)\) is zero.
Theorems~\ref{T:main-interval-rajchman}, \ref{T:main-arc-rajchman}, 
and \ref{T:main-jordan-rajchman} give the Rajchman property in the interval, fixed-arc, 
and fixed-Jordan-curve settings, respectively.
The exact Fourier-dimension formulas recalled above give, almost surely on the corresponding non-extinction events,
\[
        \dimF\mu
        =
        0,
        \qquad
        \dimF(\gamma_\#\mu)
        =
        0,
        \qquad
        \dimF\bigl(
                \Gamma_\#\widetilde{\mu}^{\T}
        \bigr)
        =
        0.
\]
Thus the measures in all three settings are pure Rajchman almost surely on their corresponding non-extinction events.
\end{proof}

\section*{Statements and Declarations}

\subsection*{Funding}

G. C. was partially supported by the National Natural Science Foundation of China (NSFC), grant no.~12371126.
X. F. was partially supported by the National Science and Technology Council, Taiwan, grant no.~114-2115-M-A49-003-MY3.

\subsection*{Competing interests}

The authors declare that they have no competing interests.

\subsection*{Use of artificial intelligence}

The authors used an artificial-intelligence tool for language editing and
\LaTeX\ formatting; all mathematical content was written, checked, and approved
by the authors.


\begin{thebibliography}{99}

\bibitem{AlgomRodriguezHertzWang2026}
Algom, A., Rodriguez Hertz, F., Wang, Z.:
Spectral gaps and Fourier decay for self-conformal measures on the plane.
Trans. Am. Math. Soc. 379, 3953--3991 (2026).
\url{https://doi.org/10.1090/tran/9507}.

\bibitem{BakerBanaji2025}
Baker, S., Banaji, A.:
Polynomial Fourier decay for fractal measures and their pushforwards.
Math. Ann. 392, 209--261 (2025).
\url{https://doi.org/10.1007/s00208-025-03091-z}.

\bibitem{Barral2014Mandelbrot}
Barral, J.:
Mandelbrot cascades and related topics.
In: Feng, D.-J., Lau, K.-S. (eds.)
Geometry and Analysis of Fractals,
Springer Proceedings in Mathematics \& Statistics, vol.~88,
pp.~1--45. Springer, Berlin, Heidelberg (2014).
\url{https://doi.org/10.1007/978-3-662-43920-3_1}.

\bibitem{Bremont2021}
Br\'emont, J.:
Self-similar measures and the Rajchman property.
Ann. Henri Lebesgue 4, 973--1004 (2021).
\url{https://doi.org/10.5802/ahl.94}.

\bibitem{CaiChengFangLiQuXiao2026}
Cai, Y., Cheng, G., Fang, X., Li, M., Qu, H., Xiao, C.:
Exact Fourier dimensions of dyadic Mandelbrot cascades under minimal
integrability.
Preprint, arXiv:2606.08683 [math.PR] (2026).
\url{https://doi.org/10.48550/arXiv.2606.08683}.

\bibitem{CaiFangQu2026Curves}
Cai, Y., Fang, X., Qu, H.:
Exact Fourier dimensions of dyadic Mandelbrot cascades on curves of
nonvanishing curvature under minimal integrability.
Preprint, arXiv:2606.11758 [math.PR] (2026).
\url{https://doi.org/10.48550/arXiv.2606.11758}.

\bibitem{ChenHanQiuWang2024}
Chen, X., Han, Y., Qiu, Y., Wang, Z.:
Harmonic analysis of Mandelbrot cascades---in the context of vector-valued
martingales.
Preprint, arXiv:2409.13164 [math.PR] (2024).
\url{https://doi.org/10.48550/arXiv.2409.13164}.

\bibitem{ChenHanQiuWang2025CR}
Chen, X., Han, Y., Qiu, Y., Wang, Z.:
The Mandelbrot--Kahane problem of Beno\^{\i}t Mandelbrot model of turbulence.
C. R. Math. 363, 35--41 (2025).
\url{https://doi.org/10.5802/crmath.697}.

\bibitem{ChenLiSuomala2025}
Chen, C., Li, B., Suomala, V.:
Fourier dimension of Mandelbrot multiplicative cascades.
Commun. Math. Phys. 406, 182 (2025).
\url{https://doi.org/10.1007/s00220-025-05354-x}.

\bibitem{Freedman1975}
Freedman, D.A.:
On tail probabilities for martingales.
Ann. Probab. 3, 100--118 (1975).
\url{https://doi.org/10.1214/aop/1176996452}.

\bibitem{GarbanVargas2026}
Garban, C., Vargas, V.:
Harmonic analysis of Gaussian multiplicative chaos on the circle.
Probab. Theory Relat. Fields (2026).
\url{https://doi.org/10.1007/s00440-026-01497-7}.

\bibitem{Kahane1985SomeRandomSeries}
Kahane, J.-P.:
Some Random Series of Functions, 2nd edn.
Cambridge Studies in Advanced Mathematics, vol.~5.
Cambridge University Press, Cambridge (1985).

\bibitem{Kahane1987PositiveMartingales}
Kahane, J.-P.:
Positive martingales and random measures.
Chin. Ann. Math. Ser. B 8, 1--12 (1987).

\bibitem{Kahane1993}
Kahane, J.-P.:
Fractals and random measures.
Bull. Sci. Math. 117, 153--159 (1993).

\bibitem{KahanePeyriere1976}
Kahane, J.-P., Peyri\`ere, J.:
Sur certaines martingales de Beno\^{\i}t Mandelbrot.
Adv. Math. 22, 131--145 (1976).
\url{https://doi.org/10.1016/0001-8708(76)90151-1}.

\bibitem{LiSahlsten2020}
Li, J., Sahlsten, T.:
Fourier transform of self-affine measures.
Adv. Math. 374, 107349 (2020).
\url{https://doi.org/10.1016/j.aim.2020.107349}.

\bibitem{LiSahlsten2022}
Li, J., Sahlsten, T.:
Trigonometric series and self-similar sets.
J. Eur. Math. Soc. 24, 341--368 (2022).
\url{https://doi.org/10.4171/JEMS/1102}.

\bibitem{Lyons1995Rajchman}
Lyons, R.:
Seventy years of Rajchman measures.
J. Fourier Anal. Appl. Special Issue, 363--377 (1995).

\bibitem{LyonsPemantlePeres1995}
Lyons, R., Pemantle, R., Peres, Y.:
Conceptual proofs of \(L\log L\) criteria for mean behavior of branching
processes.
Ann. Probab. 23, 1125--1138 (1995).
\url{https://doi.org/10.1214/aop/1176988176}.

\bibitem{Mandelbrot1974}
Mandelbrot, B.B.:
Intermittent turbulence in self-similar cascades:
divergence of high moments and dimension of the carrier.
J. Fluid Mech. 62, 331--358 (1974).
\url{https://doi.org/10.1017/S0022112074000711}.

\bibitem{Mandelbrot1976}
Mandelbrot, B.B.:
Intermittent turbulence and fractal dimension:
kurtosis and the spectral exponent \(5/3+B\).
In: Temam, R. (ed.)
Turbulence and Navier--Stokes Equations,
Lecture Notes in Mathematics, vol.~565,
pp.~121--145. Springer, Berlin, Heidelberg (1976).
\url{https://doi.org/10.1007/BFb0091452}.

\bibitem{Mattila2015}
Mattila, P.:
Fourier Analysis and Hausdorff Dimension.
Cambridge Studies in Advanced Mathematics, vol.~150.
Cambridge University Press, Cambridge (2015).
\url{https://doi.org/10.1017/CBO9781316227619}.

\bibitem{Rapaport2022}
Rapaport, A.:
On the Rajchman property for self-similar measures on \(\mathbb{R}^{d}\).
Adv. Math. 403, 108375 (2022).
\url{https://doi.org/10.1016/j.aim.2022.108375}.

\bibitem{RhodesVargas2014}
Rhodes, R., Vargas, V.:
Gaussian multiplicative chaos and applications: a review.
Probab. Surv. 11, 315--392 (2014).
\url{https://doi.org/10.1214/13-PS218}.

\bibitem{RyouSuomala2026}
Ryou, D., Suomala, V.:
Fourier dimension of Mandelbrot cascades on planar curves.
Preprint, arXiv:2603.25615 [math.PR] (2026).
\url{https://doi.org/10.48550/arXiv.2603.25615}.

\bibitem{ShmerkinSuomala2018}
Shmerkin, P., Suomala, V.:
Spatially independent martingales, intersections, and applications.
Mem. Am. Math. Soc. 251(1195), v+102 pp. (2018).
\url{https://doi.org/10.1090/memo/1195}.

\bibitem{VershyninHDP}
Vershynin, R.:
High-Dimensional Probability: An Introduction with Applications in Data Science.
Cambridge Series in Statistical and Probabilistic Mathematics, vol.~47.
Cambridge University Press, Cambridge (2018).
\url{https://doi.org/10.1017/9781108231596}.

\end{thebibliography}
\end{document}